\newtheorem{theorem}{Theorem}[section]
\newtheorem{lemma}[theorem]{Lemma}
\newtheorem{e-proposition}[theorem]{Proposition}
\newtheorem{corollary}[theorem]{Corollary}
\newtheorem{e-definition}[theorem]{Definition}
\newtheorem{remark}[theorem]{Remark}
\newtheorem{example}[theorem]{Example}
\def\hh{\mathcal H^{n-1}}
\DeclareMathOperator*{\divergenz}{div}              %
\DeclareMathOperator*{\sgn}{sgn}            %
\DeclareMathOperator*{\esssup}{ess ~sup}         %
\newcommand{\N}{\mathbb{N}}
\newcommand{\R}{\mathbb{R}}
\newcommand{\RN}{\mathbb{R}^n}
\newcommand{\dd}{\, d}
\newcommand{\Omt}{\Omega_{t}}
\newcommand{\intot}{\int_{\Omega_{t}}}
\newcommand{\intort}{\int_{E_{t}}}
\newcommand{\HN}{\mathcal{H}^{n-1}}
\newcommand{\eps}{\varepsilon}
\newcommand{\Om}{\Omega}
\newcommand{\rand}{\partial\Omega}
\newcommand{\into}{\int_{\Omega}}
\newcommand{\intor}{\int_{\partial\Omega}}
\renewcommand{\l}{\left}
\renewcommand{\r}{\right}
\numberwithin{theorem}{section}
\numberwithin{equation}{section}
\newcommand{\pina}[1]{\todo[inline,color=yellow!40]{Pina: #1}}
\newcommand{\M}{{\mathcal M}}
\newcommand{\ch}{\mathcal{H}}
\title[Boundedness of solutions to elliptic boundary value problems in Orlicz spaces]{Boundedness of solutions to Dirichlet, Neumann and Robin problems for elliptic equations in Orlicz spaces}
\author[G.\,Barletta]{Giuseppina Barletta}
\address[G.\,Barletta]{Dipartimento di Ingegneria Civile, dell'Energia, dell'Ambiente e dei Materiali, Universit\`a Mediterranea di Reggio Calabria, Via Graziella- Loc. Feo di Vito, 89122  Reggio Calabria, Italy}
\email{giuseppina.barletta@unirc.it}
\author[A.\,Cianchi]{Andrea Cianchi}
\address[A.\,Cianchi]{Dipartimento di Matematica e Informatica U. Dini, Universit\`a di Firenze, Viale Morgagni 67/A, 50134 Firenze, Italy}
\email{andrea.cianchi@unifi.it}
\author[G.\,Marino]{Greta Marino}
\address[G.\,Marino]{Technische Universit\"{a}t Chemnitz, Fakult\"at f\"ur Mathematik, Reichenhainer Stra\ss e 41, 09126 Chemnitz, Germany}
\email{greta.marino@mathematik.tu-chemnitz.de}
\subjclass[2020]{35J25, 35J60, 35B65.} 
\keywords{Quasilinear elliptic equations,  general growth, Orlicz spaces,  boundedness of solutions,
Dirichlet problems, Neumann problems, Robin problems.}
\begin{document}

\begin{abstract}
Boundary value problems for second-order  elliptic equations in divergence form, whose nonlinearity is  governed by a convex function of non-necessarily power type, are considered. The global boundedness of their solutions is established under boundary conditions of  Dirichlet, or Neumann, or Robin type. A decisive role in the results   is played by  optimal   forms of Orlicz-Sobolev embeddings and boundary trace embeddings, which allow for critical growths  of the coefficients.
\end{abstract}

\maketitle


%
\section{Introduction}

We are concerned with the global boundedness of weak solutions to  boundary value problems for second-order nonlinear elliptic equations of the form
	\begin{equation}\label{eq}
	-\divergenz \big(\mathcal A(x, u, \nabla u)\big)= \mathcal B(x, u, \nabla u) \quad \text{in } \Om, 
%
	\end{equation}
where  $\Om$ is an open set  in $\RN$, $n\geq 2$, with finite Lebesgue measure $|\Om|$. 

 This primary question in the regularity theory  
 was extensively analyzed as early as in the the classical monograph \cite{LadUr}, where the boundedness of solutions to elliptic equations was also shown to be crucial for enjoying finer regularity properties.
 Specifically, in  \cite[Theorem 7.1, Chapter 4, and Theorem 3.1, Chapter 5]{LadUr} this problem for equations as in  \eqref{eq} is  discussed in the standard case when Dirichlet boundary conditions are imposed  and the nonlinearities of the functions $\mathcal A$ and $\mathcal B$ in $u$ and $\nabla u$ are of polynomial type. This amounts to saying that  the functions $\mathcal A$ and $\mathcal B$ are subject to growth conditions  depending  on powers of $u$ and $|\nabla u|$.  Over the years, the results of \cite{LadUr} have been sharpened, and extended to different boundary conditions in several contributions, such as \cite{GP, GV, MMM, MWinkert1, MWinkert2, MWinkert3, Wang, Winkert}.

%

In the present paper, the restriction 
on power type  growths is removed, and the boundedness of  solutions to boundary value problems for equations of type \eqref{eq} driven by more general nonlinearities is established. A distinctive trait of our analysis is that the ellipticity condition on the principal part is dictated by a  general nonnegative convex function vanishing   at $0$ -- a Young function -- of the modulus of the gradient. This calls for replacing  customary Sobolev spaces with Orlicz-Sobolev spaces as a functional framework for solutions. 

More specifically, we assume that $\mathcal A\colon \Om \times \R \times \RN \to \RN$ and $\mathcal B\colon \Om \times \R \times \RN \to \R$ are Carath\'eodory functions obeying:
\begin{equation}\label{hpA}
	\mathcal A(x, t, \xi) \cdot \xi \ge A(|\xi|)- B(|t|) 
\end{equation}
and
\begin{equation}\label{hpB}
\sgn(t) \mathcal B(x, t, \xi) \le f(x)+ C(|t|)+ D(|t|) E(|\xi|)
\end{equation}
 for a.e. $x \in \Om$, for every $t \in \R$ and every $\xi \in \RN$.
Here, the dot $\lq\lq \cdot "$ stands for the scalar product in $\RN$, $A: [0, \infty) \to [0, \infty)$ is a Young function, $B, C, D, E : [0, \infty) \to [0, \infty)$ are increasing functions,
and $f: \Omega \to [0, \infty)$ is a measurable function whose integrability degree is prescribed by the membership in a weak Orlicz space defined by a Young function $M$.
%


Dirichlet, Neumann and Robin boundary value problems are included in our discussion. The datum $u_0$ in the Dirichlet boundary condition
\begin{equation*}
u=u_0 \quad \text{on $\partial \Omega$}
\end{equation*}
is naturally assumed to be bounded. The weak formulation of the corresponding problem allows for arbitrary  domains $\Omega$ such that  $|\Omega|<\infty$. 

The Neumann condition to be imposed amounts to the vanishing of the co-normal derivative associated with the principal part of  equation \eqref{eq}; namely:
\begin{equation*}
\mathcal A(x, u, \nabla u) \cdot \nu = 0 \quad \text{on $\partial \Omega$,}
\end{equation*}
where $\nu$ denotes the normal unit vector on $\partial\Omega$. The notion of weak solution to the corresponding Neumann problem avoids any a priori regularity assumption on $\Omega$. However, its degree of (ir)regularity enters the assumptions required on the functions appearing on the right-hand sides of inequalities \eqref{hpA} and \eqref{hpB}. The regularity of $\Omega$ will be prescribed in terms of its membership in Maz'ya type classes of bounded domains,   defined via relative isoperimetric inequalities.

When a Robin condition
\begin{equation}\label{rob} 
\mathcal A(x, u, \nabla u) \cdot \nu = \mathcal C(x, u) \quad \text{on $\partial \Omega$}
\end{equation}
is assigned, boundary traces of solutions are as well involved  in the definition of weak solution to the problem. The existence of traces is only guaranteed provided that $\Omega$ is regular enough. Minimal assumptions on $\Omega$ will be imposed in this connection.
The function $\mathcal C\colon \rand \times \R  \to \R$ in equation \eqref{rob} is Carath\'eodory  and is assumed to fulfill the bound
\begin{equation}\label{hpC}
\sgn(t) \mathcal C(x, t) \le g(x)+  F(|t|)  
\end{equation}
 for $\mathcal H^{n-1}$-- a.e. $x \in \partial \Om$ and for every $t \in \R$. Here, $\mathcal H^{n-1}$ denotes the $(n-1)$-dimensional measure,  $F: [0, \infty) \to [0, \infty)$ is an increasing function, and the function $g : \partial \Om \to \mathbb R$ belongs to some weak Orlicz space, with respect to the measure $\mathcal H^{n-1}$, built upon a Young function $N$.


Our result on Dirichlet problems amounts to balance conditions among the growths  of  functions $A$,$B$,$C$,$D$,$E$,\\
$M$
for their    solutions to be bounded. An analogous result holds for Neumann problems, save that the   conditions also depend on an additional parameter, which appears as an exponent in the relative isoperimetric inequality supported by the domain $\Omega$. A conjugate of the function $A$  from an optimal Orlicz-Sobolev inequality comes into  the picture of these conditions.  In fact, growth conditions in $u$ which are critical in connection with the Sobolev conjugate of $A$ are permitted.
As for Robin problems, the same assumptions as in the case of Dirichlet problems are required, plus additional balances between the functions $A$ and $F$, and between the functions $A$ and $N$. In this connection,  a conjugate of $A$ from a sharp Orlicz-Sobolev boundary trace inequality emerges as well. A critical growth in $u$ according  to the Sobolev trace conjugate of $A$ is also entitled. 

In the situation when the functions $A, B, C, D, E, F$ appearing in inequalities \eqref{hpA}, \eqref{hpB} and \eqref{hpC} are mere powers, the theorems to be presented recover several instances of those appearing in the contributions cited above, and also generalize them  to the case of Neumann problems in possibly irregular domains. 

The analysis of the global boundedness of solutions to elliptic problems under Orlicz type growth and Dirichlet boundary conditions traces back to  \cite{Talenti79} in the case when $B=C=D=E=0$, and to \cite{Talenti90} in a complementary setting when $f=0$  and $D=E=0$. The results of the latter paper were  augmented in \cite{Cianchi-boundedness}, where  a sharp growth condition on the functions $B$ and $C$ was exhibited for  solutions to the Dirichlet problems to be bounded. An extension to anisotropic equations is the subject of \cite{alberico}. 

These  are the contributions, dealing with elliptic equations ruled by Orlicz type nonlinearities, which are most closely related to the topic of our paper.  On the other hand, the study of diverse  
regularity properties of solutions to elliptic equations and systems with Orlicz growth   has seen an increasing interest in the last decades. They are nowadays the subject of a vast literature, a nonexaustive sample of which includes  
\cite{AbMo, ADF, BaSu, BC, Baroni, BeckMingione,  BHHK, BrMo, BreitSV,  BGKS, CaKP, IC-pocket, CeOk, CGZ, CGSW,  CianchiAIHP,  DeFMi, DieningSV, Gossez2, HHT, Kor, Li, Mar, MuTi, Wo}. The monograph \cite{CGSW} is   an up-to-date source of results and references in this area.

 As far as  proofs of our results are concerned, we point out that standard methods, such as Moser type iteration techniques, do not seem to yield optimal conclusions.  This is related to the fact that, unlike the case when power type  bounds in \eqref{hpA}--\eqref{hpC} are imposed, iterating optimal inequalities in the Orlicz framework need not produce an inequality which is again optimal. This drawback is also  visible, for instance, in the iteration of Orlicz-Sobolev embeddings -- see e.g. \cite{CPS} for a discussion on this point.

Our approach   makes use of identities and inequalities involving integrals over the  level sets of the solutions.  Starting with the work of De Giorgi, this is of course nowadays also customary  in the regularity theory of  PDEs. However, we have to depart from usual discretization techniques applied to the levels of integration, which apparently do not  properly fit general growth conditions. As a replacement, we derive   a differential inequality involving the distribution function of the solution, which can 
 be regarded as a continuous version of these discretization techniques, and
is
reminiscent of  arguments from \cite{Cianchi-boundedness, Talenti79, Talenti90}.  

More specifically, we start with  bounds for  integrals involving $\nabla u$ by integrals involving $u$ over level sets of the latter. These bounds are, in a sense, global Caccioppoli type inequalities, and are deduced from assumptions \eqref{hpA}--\eqref{hpC} by testing the weak formulation of the problems under consideration by truncations of the solutions themselves. The resultant inequalities are coupled with inequalities in the opposite direction for integrals of $u$ in terms of integrals of  $\nabla u$. 

The main  novelties are to be found  in this key step, 
which strongly depends on the use of  various   sharp embeddings and respective inequalities in Orlicz-Sobolev spaces. Not only inequalities with an optimal Orlicz target space, but also improvements with a target of Orlicz-Lorentz type, which is in fact optimal among all rearrangement-invariant   spaces, come into play. In dealing with Robin problems, Orlicz-Sobolev trace inequalities  have equally a pivotal role. In addition,   a new  Sobolev trace inequality in the so called Lorentz $\Lambda$-spaces, depending on the Young function $N$, is needed.

The combination of these upper and lower bounds for energy integrals of the solutions provides us with an inequality which, via  the coarea formula and isoperimetric inequalities applied to the level sets of the solutions, yields a differential inequality involving their distribution function. The regularity of the domain $\Omega$ required for Neumann and Robin problems enters the game at this stage. 
An analysis of  the solutions to the relevant differential inequality   finally dictates the  conditions to be imposed on the functions appearing in    assumptions \eqref{hpA}--\eqref{hpB}, and, in the case of Robin problems,  also in  \eqref{hpC}.

\section{Orlicz-Sobolev spaces}\label{spaces}

The function space background underlying the proofs of our main results is recalled in this section. Additional material is presented in Section \ref{traceineq}, where an optimal trace inequality in Lorentz $\Lambda$-spaces, taylored for our applications to Robin problems, is established.

\subsection{Young functions}

A function $A \colon [0, \infty) \to [0, \infty] $ is said to be a Young function if it is convex, left-continuous, vanishes at $0$, and is neither identically equal to zero, nor to infinity. Any Young function $A$ takes the form
\begin{equation*}
A(t) = \int_0^ta(\tau)\, d\tau \quad \text{for $t \geq0$,}
\end{equation*}
where the function $a: [0, \infty) \to [0, \infty]$ is  non-decreasing and left-continuous. 
 If $A$ is a Young function, then the function 
\begin{equation}\label{mono}
\frac{A(t)}t \quad \text{is non-decreasing in $(0, \infty)$.}
\end{equation}
Moreover,
\begin{equation}\label{Ak}
A(kt) \geq kA(t) \quad \text{for $k\geq 1$ and $t\geq 0$.}
\end{equation}
The Young conjugate of $A$ is denoted by $\widetilde A$, and  is defined as
	\[
	\widetilde A(s)= \sup\{st- A(t): \, t \ge 0\} \quad \text{for } s \ge 0.
	\]
One has that $\widetilde{\widetilde{A}}= A$. By the very definition of $\widetilde A$,  
	\begin{equation}
	\label{young}
	ts \le A(t)+ \widetilde A(s) \quad \text{for $s,t \geq 0$.}
	\end{equation}
Moreover, one has that
	\begin{equation}
	\label{young'}
	t \le \widetilde A^{-1}(t) A^{-1}(t) \le 2t  \quad \text{for $t \geq 0$,}
	\end{equation}
where the superscript \lq\lq $-1$" stands for the (generalized) right-continuous inverse.
In particular, inequality \eqref{young'} yields
	\begin{equation}
	    \label{dis-A-tildeA}
	\frac{A(t)}{t} \le \widetilde A^{-1}(A(t)) \le 2 \frac{A(t)}{t} \quad \text{for  } t>0.
	\end{equation}
The function $A$ is said to satisfy the $\Delta_2$-condition near infinity (briefly, $A \in \Delta_2$ near infinity) if $A$ is finite-valued and there exist constants $c \ge 2$ and $t_0 \ge 0$ such that
	\begin{equation}
	\label{delta2}
	A(2t) \le c A(t) \quad \text{for } t \ge t_0.
\end{equation}
The function $A$  is said to satisfy the $\nabla_2$-condition near infinity (briefly, $A \in \nabla_2$ near infinity) if there exist constants $c> 2$ and $t_0 \ge 0$ such that
	\begin{equation}
	\label{nabla2}
	A(2t) \ge c A(t) \quad \text{for } t \ge t_0.
	\end{equation}
If inequality \eqref{delta2}, or \eqref{nabla2}, holds with $t_0=0$, then $A$ is said to satisfy the $\Delta_2$-condition globally, or the $\nabla_2$-condition globally, respectively. 
\\ One has that
\begin{equation}\label{deltanabla}
\text{$A \in \nabla _2$ near infinity  [globally] if and only if $\widetilde A \in \Delta _2$  near infinity [globally].}
\end{equation}
A Young function $A_1$ is said to dominate another Young function $A_2$ near infinity  [globally] if there exist constants $c>0$ and $t_0\geq 0$ such that
\begin{equation*} 
A_2(t) \leq A_1(ct) \quad \text{for $t \geq t_0$ \,[$t \geq 0$].} 
\end{equation*}
If the functions $A_1$ and $A_2$ dominate each other near infinity [globally],  then they are said to be equivalent near infinity [globally]. The relations of domination and of equivalence between the functions $A_1$ and $A_2$ will be denoted by $A_2 \lesssim A_1$ and $A_2 \approx A_1$, respectively.

\subsection{Orlicz and weak Orlicz spaces}
Let $(\mathcal R, m)$ be a non-atomic, $\sigma$-finite measure space, equipped with a measure $m$. In our applications, $\mathcal R$ will be either a subset of $\RN$, endowed with the Lebesgue measure,  or a subset of $\partial \Omega$, endowed with the Hausdorff measure $\mathcal H^{n-1}$.
We denote by $\mathcal M(\mathcal R)$ the space of real-valued $m$-measurable functions on $\mathcal R$.
\\
Given a Young function $A$, 
the Orlicz class $K^A(\mathcal R)$ is defined as 
\begin{equation*}
K^A(\mathcal R) = \bigg\{u\in \M (\mathcal R): \,\, \int_\mathcal R A(|u|)\,dm <\infty\bigg\}.
\end{equation*}
The set $K^A(\mathcal R)$ is, in general, just a convex subset of $\M (\mathcal R)$.  One the other hand, the Orlicz space $L^A(\mathcal R)$, defined as
\begin{equation*}
L^A(\mathcal R) = \big\{u\in \M (\mathcal R):\,\, u/\lambda \in K^A(\mathcal R) \,\, \text{for some} \,\,\lambda \in \R\big\},
\end{equation*}
is a Banach space, equipped with the norm
 \begin{equation*}
    \|u\|_{L^{A}(\mathcal R)} = \inf \l\{\lambda>0 :\, \into A\l(\frac{|u|}{\lambda}\r) dm \le 1 \r\}. 
        \end{equation*}
As a consequence of inequality \eqref{young}, one has that
\begin{equation}\label{june21}
\int_{\mathcal R} |uv|\, dm \leq \int_{\mathcal R} A(|u|)\, dm +  \int_{\mathcal R} \widetilde A(|v|)\, dm
\end{equation}
for $u\in K^A(\mathcal R)$ and $v \in K^{\widetilde A}(\mathcal R)$.
\\
Assume that $m(\mathcal R)<\infty$. Given two Young functions $A_1$ and $A_2$, one has that
\begin{equation*}
L^{A_1}(\mathcal R) \to L^{A_2}(\mathcal R) 
\end{equation*}
if and only if $A_1$ dominates $A_2$ near infinity. Here, and in what follows, the arrow \lq\lq $\to$" stands for continuous embedding. 
\\ Given a  function $u\in \M (\mathcal R)$, we denote by $u^* : [0, \infty) \to [0, \infty]$ its decreasing rearrangement, defined as 
\begin{equation*}
u^*(s) = \inf\{t\geq 0: m(\{|u|>t\})\leq s\} \quad \text{for $s \geq 0$.}
\end{equation*}
The Hardy-Littlewood inequality tells us that
\begin{equation}\label{HL}
\int _{\mathcal R} |uv|\, dm \leq \int_0^\infty u^*(s) v^*(s)\, ds
\end{equation}
for $u, v \in \mathcal M(\mathcal R)$.
\\ The function $u^{**} : (0, \infty) \to [0, \infty]$ is defined as
    \begin{equation*}
	u^{**}(s)= \frac{1}{s} \int_0^s u^*(r) \;dr \quad \text{for }  s>0.
	\end{equation*}
Note that $u^{**}$ is also non-increasing, and 
\begin{equation*} 
u^*(s) \leq u^{**}(s) \quad \text{ for $s>0$.}
\end{equation*}
The signed decreasing rearrangement $u^\circ : [0, \infty) \to [-\infty, \infty]$ is given by 
\begin{equation*}
u^\circ(s) = \inf\{t\geq 0: m(\{u>t\})\leq s\} \quad \text{for $s\geq 0$.}
\end{equation*}
The functions $u$, $u^*$ and $u^\circ$ are equimeasurable.
\\ When $m(\mathcal R)<\infty$, the increasing rearrangement $u_*: [0, m(\mathcal R)] \to [0, \infty]$ is defined as
$$u_*(s) = u^*(m(\mathcal R) -s)  \quad \text{for $s \in [0, m(\mathcal R)]$.}$$
 A reverse Hardy-Littlewood inequality reads
\begin{equation}\label{HLrev}
\int _{\mathcal R} |uv|\, dm \geq \int_0^{m(\mathcal R)} u^*(s) v_*(s)\, ds
\end{equation}
for $u, v \in \mathcal M(\mathcal R)$.
\\
Under the assumption that $m(\mathcal R)<\infty$, we also set 
\begin{equation*}
{\rm med } (u) = u^\circ \big(\tfrac {m(\mathcal R)}2\big),
\end{equation*}
a median of $u$.
\\
 Given a Young function $A$, the weak Orlicz space $L^{A,\infty}(\mathcal R)$ is defined as the set of functions $u\in \M (\mathcal R)$ for which the norm
\begin{equation*}
\|u\|_{L^{A, \infty}(\mathcal R)} = \sup_{s>0} \frac{u^{**}(s)}{A^{-1}(1/s)}
\end{equation*}
is finite. The space $L^{A,\infty}(\mathcal R)$ is a Banach space endowed with this norm. Its name stems from the fact that $L^A(\mathcal R) \to L^{A, \infty}(\mathcal R)$ for every Young function $A$, and 
$$  \|u\|_{L^{A, \infty}(\mathcal R)} \leq \|u\|_{L^{A}(\mathcal R)}
$$
 for $u \in L^{A}(\mathcal R)$. Weak Orlicz spaces are a special instance of the Marcinkiewicz spaces, whose definition will be recalled in Section \ref{traceineq}.
\\ A variant of inequality \eqref{june21}, of critical use in our proofs, tells us what follows. 
Let $\alpha >1$ and let $A$ be a Young function 
such that 
\begin{equation}\label{convint0alpha}
\int_0\l(\frac{t}{A(t)}\r)^{\frac{1}{\alpha -1}} dt < \infty
\end{equation}
and 
\begin{equation}\label{intdivalpha}
\int^\infty \l(\frac{t}{A(t)}\r)^{\frac{1}{\alpha -1}} dt = \infty.
\end{equation}
Let $\widehat A_\alpha $ be the Young function defined by
\begin{equation}\label{hatAn}
\widehat A_\alpha (t) = \int_0^t \widehat a_\alpha (\tau)\, d\tau \quad \text{for $t \geq 0$,}
\end{equation}
and 
\begin{equation}\label{hatan}
\widehat a_\alpha ^{-1}(s) = \bigg(\int _{a^{-1}(s )}^{\infty}\bigg(\int _0^t
\bigg(\frac{1}{a(\tau)}\bigg)^{\frac{1}{\alpha -1}}
 d\tau\bigg)^{-\alpha}\frac{dt}{a(t )^{\frac{\alpha}{\alpha-1
}}}\bigg)^{\frac{1}{1-\alpha }}\,\,\,\quad{\rm
for}\,\,\,
 s \geq 0\,.
\end{equation}
Then there exists a constant  $\kappa_1=\kappa_1(\alpha)$
 such that
\begin{equation}\label{youngcianchi}
	\int_{\mathcal R} |uv| \,dm \le \kappa_1 \int_0^{\infty} \widehat A_\alpha \l(s^{-1/\alpha} u^*(s)\r) ds+ \kappa_1 \int_0^{\infty} \widetilde A\l(s^{-1/\alpha'} \int_0^s v^*(r) \,dr \r) ds
\end{equation}
for every $u, v \in \mathcal M(\mathcal R)$, see
 \cite[eq. (4.46)]{Cianchi-2004}. Note that the convergence of the inner integral on the right-hand side of equation \eqref{hatan} is   equivalent to assumption \eqref{convint0alpha}. 

\subsection{Orlicz-Sobolev spaces}

 Let $\Om$ be an open set  in $\RN$, and let $A$ be a Young function.  We define the Orlicz-Sobolev class $V^1K^A(\Om)$ as
\begin{equation*}
	V^1K^A(\Om)= \l\{u \in \M(\mathcal R): \,\,   u \text{ is weakly differentiable in $\Om$ and $|\nabla u| \in K^A(\Om)$}\r\}.
	\end{equation*}
The subset of $V^1K^A(\Om)$ of those functions which vanish, in a suitable sense, on $\partial \Omega$ can be defined as 
	\begin{align*}
	V_0^1K^A(\Om)= \bigl\{u \in \M(\mathcal R) & :\text{the continuation of $u$ by $0$ outside of $\Om$} \\ 
	\nonumber & \qquad \text{ is weakly differentiable in $\RN$ and belongs to $V^1K^A(\RN)$}\bigr\}.
	\end{align*}
	The Orlicz-Sobolev space  $W^{1, A}(\Om)$,  and  its subspace $W^{1, A}_0(\Om)$, are accordingly defined as 
    \begin{equation*}
        W^{1, A}(\Om) = \l\{u \in L^A(\Om): u/\lambda  \in  V^1K^A(\Om)  \,\, \text{for some $\lambda \in \R$} \r\},
    \end{equation*}
and 
    \begin{equation*}
            W^{1,A}_0(\Om) =  \l\{u  \in L^A(\Om): u/\lambda  \in  V^1_0K^A(\Om)  \,\, \text{for some $\lambda \in \R$} \r\}.
    \end{equation*}
The space $W^{1,A}(\Om)$, equipped with the norm
  \begin{equation}\label{norm}
    \|u\|_{W^{1,A}(\Om)}= \|u\|_{L^A(\Om)}+ \|\nabla u\|_{L^A(\Om)},
    \end{equation}
is a Banach space. Here $\|\nabla u\|_{L^A(\Om)}$ stands for $\||\nabla u|\|_{L^A(\Om)}$. The subspace $W^{1,A}_0(\Om)$ is  closed in $W^{1,A}(\Om)$ with respect to this  norm, and hence is also a Banach space. A Poincar\'e type inequality ensures that, if $|\Omega|<\infty$, then the functional $\|\nabla u\|_{L^A(\Om)}$ defines a norm on $W^{1,A}(\Om)$ equivalent to the norm \eqref{norm}.

\subsection{Orlicz-Sobolev embeddings}

Let $\Omega$ be an open set in $\RN$ such that $|\Omega|<\infty$. Given a Young function $A$, let $H_n  \colon [0, \infty) \to [0, \infty) $ be the function defined as
	\begin{equation}
	\label{H}
	H_n(t)= \l(\int_0^t \l(\frac{\tau}{A(\tau)}\r)^{\frac{1}{n-1}} d\tau \r)^{\frac{1}{n'}} \quad \text{for } t \ge 0,
	\end{equation}
where $n'= \frac n{n-1}$ and $A$ is modified, if necessary, near $0$ in such a way that
\begin{equation}\label{intconv0}
\int_0\l(\frac{t}{A(t)}\r)^{\frac{1}{n-1}} \,dt < \infty.
\end{equation}
The  Sobolev conjugate  of $A$ is the Young function  $A_n$, introduced in \cite{Cianchi-boundedness}, and  given by
	\begin{equation}
	\label{An}
	A_n(t)= A(H_n^{-1}(t)) \quad \text{for } t \ge 0.
	\end{equation}
 Here, $H_n^{-1}$ denotes the (generalized) left-continuous inverse of $H_n$.  One has that
\begin{equation}\label{embw0}
    W_0^{1,A}(\Om) \to L^{A_n}(\Om),
   \end{equation}
and $L^{A_n}(\Om)$ is the optimal (i.e. smallest possible) Orlicz target space \cite{Cianchi-boundedness}, see also  \cite{Cianchi-embedding} for an alternate equivalent formulation. In particular, there exists a positive constant $\kappa_2=\kappa_2(n)$ such that
    \begin{equation}
       \label{eq:poincare}
        \into A_{n} \l(\frac{\kappa_2 |u|}{\l(\into A(|\nabla u|) \,dy\r)^{1/n}} \r) dx \le \into A(|\nabla u|) \,dx
    \end{equation}
for every $u \in V^1_0K^A(\Omega)$.
\\
Notice that,  if $A$ grows so fast near infinity that
\begin{equation}\label{intconv}
\int ^\infty \l(\frac{t}{A(t)}\r)^{\frac{1}{n-1}} dt < \infty,
\end{equation}
then 
\begin{equation}\label{embinf}
W_0^{1,A}(\Om) \to L^\infty(\Om).
\end{equation}
Actually, condition \eqref{intconv} implies
that $H_n^{-1}(t)=\infty$ for large $t$. Hence, $A_n(t) = \infty$ as well, and $L^{A_n}(\Om)= L^\infty (\Omega)$, up to equivalent norms. 
\\ On the other hand,  the space $W_0^{1,A}(\Om)$ does contain unbounded functions in the opposite regime when
\begin{equation}\label{intdiv}
\int ^\infty \l(\frac{t}{A(t)}\r)^{\frac{1}{n-1}} dt = \infty.
\end{equation}
If assumption \eqref{intdiv}  is in force, embedding \eqref{embw0}  can still be augmented,
provided that the class of admissible target  spaces is enlarged as to include all rearrangement-invariant spaces. The optimal target
for embeddings of $W^{1, A}_0(\Om)$ into spaces from  this
 class is an Orlicz-Lorentz space, equipped with the norm given by
\begin{equation*}
\|u\|_{L(\widehat A_n, n)(\Omega)} = \big\|s^{-\frac 1n}
u^* (s)\big\|_{L^{\widehat A_n}(0, |\Omega|)}
\end{equation*}
for $u \in \M (\Omega)$, where $\widehat A_n$ is the Young function defined as in \eqref{hatAn}--\eqref{hatan}, with $\alpha =n$.
%
Note that, as in 
 the case of  definition \eqref{H} for the function $A_n$, the function $A$  is modified near zero, if necessary, in such a way that assumption  \eqref{intconv0} is satisfied. Such an assumption is equivalent to requiring that the innermost integral   on the right-hand side of equation \eqref{hatan}, with $\alpha =n$, converges.
\\
The relevant embedding  reads
 \begin{equation}
    \label{embol}    
   W_0^{1, A}(\Om) \to L(\widehat A_n, n)(\Omega).
        \end{equation}
Moreover, a positive constant $\kappa_3=\kappa_3(\Omega)$ exists such that
\begin{equation}\label{intol0}
\int_0^{|\Omega|}\widehat A_\alpha \big(\kappa_3 s^{- \frac 1n} u^*(s)\big)\, ds \leq \int_\Omega  A(|\nabla u|)\, dx
\end{equation}
for every $u \in V^1_0K^A(\Omega)$.
%
%
%
%
%
%
%
%

The optimal Orlicz target space for embeddings of the Orlicz-Sobolev space $W^{1,A}(\Om)$ is the same as in \eqref{embw0}, provided that   $\Omega$ is sufficiently regular. Bounded Lipschitz  domains, bounded domains satisfying the cone property, bounded John domains, and   $(\varepsilon, \delta)$-domains are admissible, for instance.  Any domain from these classes supports  a relative isoperimetric inequality of the form
\begin{equation}
    \label{isop}
    \kappa_4 \min \{|E|, |\Om \setminus E| \}^{ \frac 1{\alpha'}} \le \mathcal H^{n-1}(\partial ^M E \cap \Om)
    \end{equation}
with $\alpha =n$, for some positive constant $\kappa_4>0$ and every measurable set $E \subset \Om$. Here,  $\partial ^M$ stands for essential boundary in the sense of geometric measure theory, and $ \mathcal H^{n-1}(\partial ^M E \cap \Om)$ is the perimeter of $E$, relative to $\Om$.
\\ The name of inequality \eqref{isop} stems from the fact that it is a version in an open set $\Omega$ of the classical isoperimetric inequality in $\RN$
\begin{equation}\label{isoprn}
    n \omega_n^{\frac 1n}  \min \{|E|, |\RN \setminus E| \}^{ \frac 1{n'}} \le \mathcal H^{n-1}(\partial ^M E)
  \end{equation}
for every  measurable set $E\subset \RN$, where $\omega_n$ denotes the Lebesgue measure of the unit ball in $\RN$.
\\
The embedding 
$$W^{1,A}(\Om) \to L^{A_n}(\Om),$$
for bounded domains satisfying inequality \eqref{isop} with $\alpha =n$, is established in  \cite[Theorem 6.12]{CPS} and in \cite{Cianchi-embedding}, with $A_n$ replaced with an equivalent Young function.
\\ In the same papers, Sobolev embeddings for the space $W^{1,A}(\Om)$ are also offered when $\Omega$ is a  bounded open set
fulfilling inequality \eqref{isop} for some 
$\ \alpha > n$. The use of relative isoperimetric inequalities in the characterization of Sobolev embedding was discovered by V. Maz'ya more than sixty years ago \cite{Ma1} (see also \cite{Mabook} for an expanded treatment of this topic). We shall denote by $\mathcal{J}_{1/{\alpha'}}$ the Maz'ya class of open sets satisfying inequality \eqref{isop}.
\\ The Orlicz target space in Orlicz-Sobolev embedding on these domains is defined in terms of the Young function $A_{\alpha}$ given by
    \begin{equation*}
        A_{\alpha}(t)= A(H_{\alpha}^{-1}(t)) \quad \text{for } t \ge 0,
    \end{equation*} 
where $H_{\alpha}$ is defined in analogy to \eqref{H}, with $n$ replaced by $\alpha$; namely
    \begin{equation*}
        H_{\alpha}(t)= \l(\int_0^t \l(\frac{\tau}{A(\tau)}\r)^{\frac{1}{\alpha -1}} d\tau \r)^{\frac 1{\alpha'}} \quad \text{for $t\geq 0$,}
    \end{equation*}
with a parallel convention on the modification of $A$ near zero as in \eqref{H} in such a way that condition \eqref{convint0alpha} is fulfilled.
\\
Indeed, 
if $\Om \in \mathcal{J}_{1/{\alpha'}}$ for some $\alpha \geq n$,  then
    \begin{equation}
    \label{p1}    
   W^{1, A}(\Om) \to L^{A_\alpha}(\Om).
        \end{equation}
Furthermore, there exists a positive constant $\kappa_5=\kappa_5(\Omega)$ such that
\begin{equation}
        \label{eq:poincarealpha}
        \into A_{\alpha} \l(\frac{\kappa_5 |u- {\rm med}(u)|}{\l( \into A(|\nabla u|) \,dy\r)^{1/\alpha}} \r) dx \le \into A(|\nabla u|) \,dx 
    \end{equation}
for every $u \in V^1K^A(\Omega)$.  
In particular, 
\begin{equation}\label{embinf1}
W^{1,A}(\Om) \to L^\infty(\Om)
\end{equation}if
\begin{equation}\label{intconvalpha}
\int^\infty \l(\frac{t}{A(t)}\r)^{\frac{1}{\alpha -1}} dt < \infty.
\end{equation}
By contrast, embedding   \eqref{embinf1} is not guaranteed in the opposite regime corresponding to condition \eqref{intdivalpha}.
\\
If the latter condition  is in force, the target space in embedding \eqref{p1} admits an improvement in the 
class of  rearrangement-invariant spaces, which parallels \eqref{embol}. 
For an arbitrary domain $\Omega \in  \mathcal{J}_{1/{\alpha'}}$, the optimal target
for embeddings of $W^{1, A}(\Om)$ into rearrangement-invariant spaces is the Orlicz-Lorentz space $L(\widehat A_\alpha, \alpha)(\Omega)$, endowed with the norm defined as
\begin{equation*}
\|u\|_{L(\widehat A_\alpha, \alpha)(\Omega)} = \big\|s^{-\frac 1\alpha}
u^* (s)\big\|_{L^{\widehat A_\alpha}(0, |\Omega|)}
\end{equation*}
for $u \in \M (\Omega)$, where $\widehat A_\alpha$ is the Young function defined as in \eqref{hatAn}--\eqref{hatan}.
%
Analogously to  the embeddings presented above, 
the function $A$  is suitably modified near zero, if necessary, for condition  \eqref{convint0alpha} to be fulfilled. This ensures that the right-hand side of equation \eqref{hatan} is well defined.
\\
The   embedding in question takes the form
 \begin{equation*}
   W^{1, A}(\Om) \to L(\widehat A_\alpha, \alpha)(\Omega).
        \end{equation*}
Moreover, a positive constant $\kappa_6=\kappa_6(\Omega)$ exists such that
\begin{equation}\label{intol}
\int_0^{|\Omega|}\widehat A_\alpha \big(\kappa_6 s^{- 1/\alpha} (u-{\rm med} (u))^*(s)\big)\, ds \leq \int_\Omega  A(|\nabla u|)\, dx
\end{equation}
for every $u \in V^1K^A(\Omega)$.

A linear trace operator can be defined on any Orlicz-Sobolev space $W^{1,A}(\Omega)$ under suitable regularity assumptions on  $\Omega$. Minimal conditions on a bounded open set $\Omega$, which allow for the definition of a  trace operator,  are:
    \begin{equation*}
    \mathcal H^{n-1}(\partial \Omega)<\infty, \quad \mathcal H^{n-1}(\partial \Omega \setminus \partial ^M\Omega)=0,
        \end{equation*}
and 
 \begin{equation*} 
\min\{\hh (\partial ^M E \cap
\partial \Omega) \, , \hh ( \partial \Omega \setminus \partial ^M E)\} \leq \kappa_7 \hh
(\partial ^M E \cap \Omega)
\end{equation*}
for some positive constant $\kappa_7>0$ and every measurable set $E \subset
\Omega$. An open set fulfilling these properties will be called an \emph{admissible domain} in what follows. 
 In particular, any Lipschitz
domain is an admissible domain. In turn,  any  admissible domain belongs to the class $\mathcal J_{1/{n'}}$
\\
The assumption
that $\Omega$ be an admissible domain is necessary and sufficient
for the existence of a linear trace operator ${\rm Tr}: W^{1,1}(\Omega) \to  L^1(\partial \Omega )$   -- see  \cite[Section 5.10]{Z} or \cite[Theorem
9.5.2]{Mabook}. 
\\ The Young function $A_T$ defining the optimal Orlicz target space in Orlicz-Sobolev trace embeddings was exhibited in \cite{Cianchi-2010}, and is given by 
	\begin{equation}
	\label{AT}
A_T(t)= \int_0^t \frac{A(H_n^{-1}(\tau))}{H_n^{-1}(\tau)}\, d\tau  \quad \text{for $t \ge 0$,}
%
	\end{equation}
where  $H_n$ is as in \eqref{H} and, without loss of generality, the function $A$ is assumed to fulfill condition \eqref{intconv0}. From \cite[Theorem 3.1-(i)]{Cianchi-2010}, we have that
 \begin{equation}
        \label{trace-ineq1}
    \operatorname{Tr}: W^{1,A}(\Om) \to L^{A_T}(\rand),
    \end{equation}
the space  $L^{A_T}(\rand)$ being optimal in \eqref{trace-ineq1} among all Orlicz target spaces.
Moreover, there exists a positive constant $\kappa_8=\kappa_8(\Omega)$ such that 
\begin{equation}\label{traceint}
    \intor A_T \l(\frac{\kappa_8|\operatorname{Tr} u- {\rm med}(u)|}{\big(\into  A(|\nabla u|) \, dy \big)^{1/n}} \r) d \ch^{n-1} \le \l(\into   A(|\nabla u|) \, dx \r)^{{1}/{n'}}
   \end{equation}
for every $u \in V^1K^A(\Omega)$.
\\ In particular, under assumption \eqref{intconv},  one has that $L^{A_T}(\rand)= L^\infty(\partial \Omega)$, up to equivalent norms.
\\ Let us notice that although \cite[Theorem 3.1-(i)]{Cianchi-2010} is stated for Lipschitz domains $\Omega$, it continues to hold for any admissible domain. Indeed, its proof makes use of an interpolation argument which relies upon endpoint  boundedness properties of the operator ${\rm Tr}$, which are still enjoyed  on any admissible domain.

\section{Main results} 

Our results about Dirichlet, Neumann and Robin problems for equation \eqref{eq} are presented in Subsections \ref{secdir}, \ref{secneu} and \ref{secrob}, respectively. Each result is illustrated by an example involving equations whose principal part has a power-times-logarithmic type growth. 

\subsection{Dirichlet problems}\label{secdir}

Let  $\Om$ be an open set in $\RN$,  with $|\Omega|<\infty$, and  let $u_0 \in  V^1 K^A(\Om)$. Consider the  Dirichlet   problem
	\begin{equation}
	\label{dirichlet}
	\begin{cases}
	-\divergenz \big(\mathcal A(x, u, \nabla u)\big)= \mathcal B(x, u, \nabla u) \quad & \text{in } \Om \\
	u= u_0 \quad & \text{on } \rand,
	\end{cases}
	\end{equation}
under the growth conditions \eqref{hpA} and \eqref{hpB}.  
 A function $u \in V^1 K^A(\Om)$ is said to be a weak solution to problem \eqref{dirichlet} if $u-u_0 \in V_0^1 K^{A}(\Om)$ and 
	\begin{equation}
	\label{dir-weak-sol}
	\into \mathcal A(x, u, \nabla u) \cdot \nabla \phi \; dx= \into \mathcal B(x, u, \nabla u)\, \phi \; dx 
	\end{equation}
for every $\phi \in V_0^1 K^{A}(\Om)$. \\
Our assumptions ensuring the boundedness of any solution to problem \eqref{dirichlet} read as follows. 
The boundary datum $u_0$ is required to fulfill the condition
\begin{equation}\label{ipu0}
u_0 \in L^\infty (\Omega).
\end{equation}
As far as the functions $B, C, D, E$ appearing in equations  \eqref{hpA} and \eqref{hpB} are concerned, we assume that
\begin{equation}\label{ipyoung}
A \circ E^{-1} \quad \text{ is a Young function,}
\end{equation}
and  
there exist constants $\sigma>0$, $k>1$ and $t_0>0$  such that
\begin{equation}\label{ipB}
		B(t) \le A_n(\sigma t) \quad  \text{for   $t> t_0$,}
		\end{equation}
\begin{equation}\label{ipC}
C(t) \le \frac{A_n(\sigma t)}{t} \quad \text{for   $t> t_0$,}
\end{equation}
\begin{equation}\label{ipD}
D(t) \le \frac{1}{kt}  \l(\l(A \circ E^{-1}\r)^{\sim} \r)^{-1} \circ A_n(\sigma t) \quad \text{for   $t> t_0$,}
\end{equation}
where $A_n$ is the Sobolev conjugate of $A$ defined by \eqref{An}.
\\ Finally, we require that 
\begin{equation}\label{ipf}
f \in L^{M, \infty}(\Omega)
\end{equation}
for some Young function $M$ growing so fast   near infinity that
   \begin{align}\label{new Mn}
       \int_0A^{-1 }\Bigg(\frac{1}{s} \left(\int_0^s \widetilde A \big(\lambda r^{ 1/n} M^{-1}(1/r) \big) dr+
\int_s^{\infty} \widetilde A \big(\lambda r^{- 1/{n'}} s M^{-1}(1/s) \big) dr\right)\Bigg) \frac{ds}{s^{1/{n'}}}<\infty  
   \end{align}
for every $\lambda >0$. 

Let us emphasize that conditions \eqref{ipB}--\eqref{new Mn}  depend only on the asymptotic behaviour of the functions $A, B, C, D, E, M$ and $A_n$ near infinity. 

 In view of the Sobolev embedding \eqref{embinf}, it suffices to consider the case when the Young function $A$ fulfills condition \eqref{intdiv}, otherwise any weak solution to problem \eqref{dirichlet} is automatically bounded.

\begin{theorem}
\label{thm:dir}
Let $\Om$ be an open set in  $\RN$ with finite measure. Suppose that the Young function $A$ satisfies condition \eqref{intdiv} and that assumptions \eqref{ipu0}--\eqref{ipf} are in force. If $u$ is a weak solution to the Dirichlet problem \eqref{dirichlet}, then, $u \in L^{\infty}(\Om)$.
\end{theorem}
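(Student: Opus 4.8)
The plan is to test the weak formulation \eqref{dir-weak-sol} with suitable truncations of $u-u_0$ and then run the De Giorgi type argument in its continuous, distribution-function form. Fix $t > \|u_0\|_{L^\infty(\Omega)}$ and, for $h>0$, take as test function $\phi$ the truncation at heights $t$ and $t+h$ of the positive part of $u - u_0$; a parallel argument handles the negative part, so we focus on the super-level sets $\{u>t\}$. Plugging $\phi$ into \eqref{dir-weak-sol} and using the structural assumptions \eqref{hpA}--\eqref{hpB}, the term $\int \mathcal A(x,u,\nabla u)\cdot\nabla\phi$ produces, from below, $\int_{\{t<u<t+h\}} A(|\nabla u|)\,dx$ minus error terms controlled by $B(|u|)$ on the same strip, while the right-hand side contributes the terms $f$, $C(|u|)$ and $D(|u|)E(|\nabla u|)$ integrated over $\{u>t\}$ (after also controlling the contribution of $\nabla u_0$, which is harmless since $u_0$ is bounded with gradient in $K^A$). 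The critical point is to absorb the product term $D(|t|)E(|\nabla u|)$: one uses the Young inequality \eqref{young} with the pair $(A\circ E^{-1}, (A\circ E^{-1})^{\sim})$ — this is exactly why \eqref{ipyoung} and the precise form of \eqref{ipD} are imposed — so that the $E(|\nabla u|)$ factor is converted into a multiple of $A(|\nabla u|)$ that can be reabsorbed on the left, at the cost of a term governed by $(( A\circ E^{-1})^{\sim})^{-1}$ composed appropriately, which \eqref{ipD} matches with $A_n(\sigma t)$.

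\textbf{From Caccioppoli to a differential inequality.} Having absorbed the gradient-gradient term, divide by $h$ and let $h\to 0^+$. By the coarea formula, $\frac1h\int_{\{t<u<t+h\}} A(|\nabla u|)\,dx \to -\frac{d}{dt}\int_{\{u>t\}} A(|\nabla u|)\,dx$ for a.e.\ $t$, and we obtain a pointwise-in-$t$ inequality bounding $-\frac{d}{dt}\int_{\{u>t\}}A(|\nabla u|)\,dx$ (call this quantity related to the energy on the level set) in terms of $\big(f(x) + A_n(\sigma t)/t \cdot \text{stuff} + \dots\big)$ integrated over $\{u>t\}$, i.e.\ essentially in terms of $\mu(t):=|\{u>t\}|$, the contribution $\int_{\{u>t\}} f\,dx$ (estimated via Hardy–Littlewood \eqref{HL} and $f\in L^{M,\infty}$), and the constants $\sigma, k$ from \eqref{ipB}--\eqref{ipD}. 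On the other hand, the reverse direction uses the optimal Orlicz–Sobolev inequality: since $u-u_0$ (truncated) lies in $V^1_0 K^A(\Omega)$, inequality \eqref{intol0} — or, in the coarser Orlicz form, \eqref{eq:poincare} — gives a lower bound for $\int A(|\nabla u|)$ over the level set in terms of $\widehat A_n$ or $A_n$ evaluated on the rearrangement of $u$, hence in terms of $t$ and $\mu(t)$. Combining the upper Caccioppoli bound with this lower Sobolev bound, and invoking the relative isoperimetric inequality \eqref{isop} with $\alpha=n$ (available because $|\Omega|<\infty$ and we only use level sets of small measure, where the classical inequality \eqref{isoprn} localizes), yields a closed ordinary differential inequality for $\mu$, of the schematic form $-\mu'(t) \ge \Psi\big(t,\mu(t)\big)$ with $\Psi$ built from $A$, $A_n$, $\widehat A_n$, $M$ and the structural constants.

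\textbf{Solving the differential inequality.} It remains to show that this differential inequality forces $\mu(t)=0$ for $t$ large, i.e.\ $u$ is essentially bounded above; the symmetric argument gives the lower bound. This is where conditions \eqref{ipB}, \eqref{ipC}, \eqref{ipD} and especially \eqref{new Mn} are precisely calibrated: \eqref{ipB}--\eqref{ipD} ensure the zeroth-order terms do not overwhelm the gain from the Sobolev term (the factor $\sigma$ can be taken small and $k>1$ gives room for absorption), while \eqref{new Mn} — which is exactly the convergence at $0$ of the integral built from $\widetilde A$, $M^{-1}$ and the $n$-scaling — guarantees that the contribution of $f$, after the rearrangement estimate \eqref{youngcianchi} with $\alpha=n$, integrates to something finite, so that the resulting integral condition $\int_{t_0}^\infty (\text{decreasing function of } \mu) \,d\mu < \infty$ is met and $\mu$ must vanish in finite $t$. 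The main obstacle is the middle step: one must carry out the absorption and rearrangement estimates \emph{without} discretizing the levels (Moser/De Giorgi iteration would lose optimality here, as the authors stress), so the delicate part is choosing the truncation test functions and the exact bookkeeping of constants so that the continuous differential inequality one lands on is sharp enough for \eqref{new Mn} to be not only sufficient but the natural borderline condition. I expect the estimate of the $f$-term via \eqref{youngcianchi} and its reconciliation with the Sobolev lower bound \eqref{intol0} to be the technical heart of the argument.
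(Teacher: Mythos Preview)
Your ingredients are right, but two steps are off. First, the strip truncation at levels $t,t+h$ followed by $h\to 0$ breaks the absorption of the $DE$ term: after Young's inequality that term yields $\tfrac1k\int_{\{|u|>t\}}A(|\nabla u|)\,dx$ over the \emph{full} super-level set, whereas your left-hand side lives on the strip (or becomes $-\tfrac{d}{dt}$ of the energy after the limit); these do not match, so nothing is reabsorbed. The paper tests instead with $\phi=\sgn(u)(|u|-t)_+$ --- taking $t>\|u_0\|_{L^\infty}$ puts $\phi\in V_0^1K^A(\Omega)$ outright, with no $\nabla u_0$ to track --- so that both sides are integrals over $\Omega_t$ and the absorption $(1-\tfrac1k-\varepsilon\kappa_1)\int_{\Omega_t}A(|\nabla u|)\le\dots$ is honest. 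Only \emph{after} this does one pass to differential form, via Jensen, coarea, and the isoperimetric inequality in $\RN$ \eqref{isoprn} (not the relative one \eqref{isop}: for Dirichlet no regularity of $\Omega$ is assumed, and the truncation extends by zero).

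Second, your endgame ``the ODI forces $\mu(t)=0$ for large $t$'' does not close under the mere hypothesis \eqref{new Mn}. The inequality obtained is for $\eta(t)=\int_t^\infty\mu(\tau)^{1/n'}\,d\tau$ and reads $\eta(t)\le c(-\eta'(t))^{n'}A^{-1}\big(A_n(ct)+\Psi(\mu(t))\big)$, with $\Psi$ built from $\widetilde A$ and $M^{-1}$ as in \eqref{N}. Under the contradiction hypothesis $\esssup|u|=\infty$, one must split $[t_1,\infty)$ into $U=\{t:\Psi(\mu(t))\ge A_n(ct)\}$ and $W=[t_1,\infty)\setminus U$. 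On $U$ the inequality becomes $\eta\le c\,\Phi\big((-\eta')^{n'}\big)$ with $\Phi$ as in \eqref{def-M}, and Lemma~\ref{prop:M} together with \eqref{new Mn} yields $|U|<\infty$. On $W$ the $A_n$ term dominates, and after a rearrangement argument comparing $\int_W L$ with $\int_{[s,\infty)}L$ one still needs \cite[Lemma~2]{Cianchi-boundedness} to produce the contradiction. Neither the $U/W$ decomposition nor that external lemma appears in your sketch, and a direct integration of your schematic ODI for $\mu$ will not recover them.
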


\begin{remark}\label{oss1}
{\rm
Under the additional assumption that $A \in \nabla_2$ near infinity, inequality \eqref{new Mn} holds for every $\lambda >0$ if and only if it just holds for some $\lambda >0$. This is a consequence of property \eqref{deltanabla} and inequality \eqref{Ak}.}
\end{remark}

\begin{remark}\label{oss2}
{\rm 
A change of variables shows that
\begin{equation}\label{alternate}
    \int_s^{\infty} \widetilde A \big(\lambda r^{-\frac 1{n'}} s M^{-1}(1/s) \big) dr= n's^{n'}M^{-1}(1/s)^{n'}\int_0^{s^{\frac 1{n}}M^{-1}(1/s)}\frac{\widetilde A (\lambda r)}{r^{1+n'}}\,dr \quad \text{for $s>0$.}
 \end{equation}
The function on the right-hand side of equation \eqref{alternate} is usually handier than the one on the left-hand side in verifying condition \eqref{new Mn}.
}    
\end{remark}

\begin{example}\label{ex:dir}
{\rm
Assume that $A$  is a Young function such that
\begin{equation}\label{exA}
A(t)  \approx t^p (\log t)^{\delta} \quad \text{near infinity,}
\end{equation}
where either $p>1$ and $\delta \in \mathbb R$, or $p=1$ and $\delta \geq 0$.
\\ Assumption \eqref{intdiv} amounts to requiring that
\begin{equation}\label{apr24}
\text{either $p< n$,    or   $p=n$ and  {$\delta \le n-1$}.}
\end{equation}
One can verify that
    $$
    A_n(t) \approx
    \begin{cases}
t^{\frac{n p}{n -p}}(\log t)^{\frac{n \delta}{n -p}}&\ \text{ if } \ 1\leq p<n \\
e^{t^{\frac{n}{n -1-\delta}}}&\ \text{ if } \ p=n\ \text{ and }\ \delta <n -1 \\
e^{e^{t^{\frac{n }{n -1}}}}&\ \text{ if } \ p=n\ \text{ and }\ \delta =n -1
\end{cases}
\qquad \text{near infinity.}
$$
Assume that the functions $B$ and $C$, appearing in conditions \eqref{hpA} and \eqref{hpB},  fulfill the inequalities
\begin{equation}\label{Bexdir}
B(\sigma t)\leq 
\begin{cases}
t^{\frac{n p}{n -p}}(\log t)^{\frac{n \delta}{n -p}}&\ \text{ if } \ 1\leq p<n \\
e^{t^{\frac{n}{n -1-\delta}}}&\ \text{ if } \ p=n\ \text{ and }\ \delta <n -1 \\
e^{e^{t^{\frac{n }{n -1}}}}&\ \text{ if } \ p=n\ \text{ and }\ \delta =n -1
\end{cases}
\qquad \text{near infinity,}\
\end{equation}
and 
\begin{equation}\label{Cexdir}
C(\sigma t)\leq 
\begin{cases}
t^{\frac{n p}{n -p}-1}(\log t)^{\frac{n \delta}{n -p}}&\ \text{ if } \ 1\leq p<n \\
e^{t^{\frac{n }{n -1-\delta}}}&\ \text{ if } \ p=n\ \text{ and }\ \delta <n -1 \\
e^{e^{t^{\frac{n }{n -1}}}}&\ \text{ if } \ p=n\ \text{ and }\ \delta =n -1
\end{cases}
\qquad \text{near infinity,}
\end{equation}
for some constant $\sigma >0$. Also, suppose, for simplicity, that $D=E=0$.
\\ Let $f\in L^{M, \infty}(\Om)$, where $M$ is a Young function such that
\begin{equation}\label{exPhi}
M(t) \approx t^q (\log t)^{\beta} \quad \text{near infinity,}
\end{equation}
and  either $q>1$ and $\beta \in \mathbb R$, or $q=1$ and $\beta> 0$.
\\
By Theorem \ref{thm:dir},  
any weak solution  $u$ to  the Dirichlet problem \eqref{dirichlet} is bounded, provided that, in addition, condition \eqref{new Mn} is satisfied.
Since
	\begin{equation*}	    
	\widetilde A(t) \approx t^{p'} (\log t)^{-\frac{\delta}{p-1}}, \qquad  A^{-1}(t) \approx t^{1/p} (\log t)^{-\delta/p}, \qquad M^{-1}(t) \approx t^{1/q} (\log t)^{-\beta/q}\quad \text{near infinity,}
	\end{equation*}
this condition corresponds to demanding that
$$
\begin{cases}
	 \displaystyle \text{either } q> \frac{n}{p}, \\
	\text{ or } \,\,\displaystyle q= \frac{n}{p}>1 \,\,\text{ and }\,\, \beta p+ n \delta> n(p-1),
	  \\
	\text{ or }\,\,
	\displaystyle q= \frac{n}{p}=1, \, \delta \le n-1 \,\, \text{and} \,\,  \beta+\delta>n-\frac{1}{n}. \\
    \end{cases}
	$$
}
\end{example}

\medskip

\subsection{Neumann problems}\label{secneu}

Given  a bounded open set  $\Omega$ in $\RN$,   consider the  Neumann problem
	\begin{equation}
	\label{neumann}
	\begin{cases}
	-\divergenz \big(\mathcal A(x, u, \nabla u)\big)= \mathcal B(x, u, \nabla u) \quad & \text{in } \Om \\
	\mathcal A(x, u, \nabla u) \cdot \nu = 0 \quad &\text{on $\partial \Omega$,}
	\end{cases}
	\end{equation}
under the growth conditions \eqref{hpA} and \eqref{hpB}.  
 A function $u \in V^1 K^A(\Om)$ is said to be a weak solution to problem \eqref{neumann} if 
	\begin{equation}
	\label{neu-weak-sol}
	\into \mathcal A(x, u, \nabla u) \cdot \nabla \phi \; dx= \into \mathcal B(x, u, \nabla u)\, \phi \; dx 
	\end{equation}
for every $\phi \in V^1 K^{A}(\Om)$.

The conditions to be imposed on the functions appearing on the right-hand sides of equations \eqref{hpA} and \eqref{hpB}  depend on the degree of regularity of the domain $\Omega$. The latter is assumed to satisfy
\begin{equation*} 
\Omega \in \mathcal J_{1/\alpha'}
\end{equation*}
for some  exponent $\alpha \geq n$. Here,  $\mathcal J_{1/\alpha'}$ denotes the 
 Maz'ya class defined according to inequality \eqref{isop}.
\\ 
Besides hypothesis \eqref{ipyoung}, the functions $B, C, D, E$ are requested to fulfill bounds parallel to \eqref{ipB}--\eqref{ipD}, with the function $A_n$ replaced by the Sobolev conjugate $A_\alpha$ entering embedding \eqref{p1}. Namely:
\begin{equation}\label{ipBn}
		B(t) \le A_\alpha(\sigma t) \quad  \text{for   $t> t_0$,}
		\end{equation}
\begin{equation}\label{ipCn}
C(t) \le \frac{A_\alpha(\sigma t)}{t} \quad \text{for   $t> t_0$,}
\end{equation}
\begin{equation}\label{ipDn}
D(t) \le \frac{1}{kt}  \l(\l(A \circ E^{-1}\r)^{\sim} \r)^{-1} \circ A_\alpha(\sigma t) \quad \text{for   $t> t_0$,}
\end{equation}
for some constants $\sigma>0$, $k>1$ and $t_0\geq 0$.
\\
 Similarly,  condition \eqref{new Mn} on the Young function $M$ defining an admissible ambient weak Orlicz space $L^{M, \infty}(\Omega)$ for $f$ turns into 
   \begin{align}\label{new M}
       \int_0A^{-1 }\Bigg(\frac{1}{s} \left(\int_0^s \widetilde A \big(\lambda r^{ 1/\alpha} M^{-1}(1/r) \big)dr +
\int_s^{\infty} \widetilde A \big(\lambda r^{- 1/{\alpha'}} s M^{-1}(1/s) \big) dr\right)
\Bigg) \frac{ds}{s^{1/\alpha'}}<\infty  
   \end{align}
for every $\lambda >0$.
\\ Owing to embedding \eqref{embinf1}, which holds  under assumption \eqref{intconvalpha},  in our result on Neumann problems it suffices to focus on the case when the function $A$ fulfills the complementary condition \eqref{intdivalpha}.

\begin{theorem}
\label{thm:neu}
Let $\Om$ be an open bounded set in  $\RN$ from the class $\mathcal J_{1/\alpha'}$, for some $\alpha \geq n$. Let $A$ be a Young function satisfying condition \eqref{intdivalpha}. Suppose that assumptions \eqref{ipyoung} and \eqref{ipBn}--\eqref{ipDn}  are in force, and that condition \eqref{ipf} holds for some Young function $M$ satisfying \eqref{new M}. If $u$ is a weak solution to the Neumann problem \eqref{neumann}, then, $u \in L^{\infty}(\Om)$.
\end{theorem}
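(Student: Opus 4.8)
The plan is to run the scheme sketched in the Introduction, in close parallel with the treatment of the Dirichlet case, Theorem~\ref{thm:dir}: one replaces, throughout, the exponent $n$ and the classical isoperimetric inequality by the exponent $\alpha$ and the relative isoperimetric inequality \eqref{isop} underlying the Maz'ya class $\mathcal J_{1/\alpha'}$, and the $W^{1,A}_0(\Omega)$-embeddings \eqref{embw0}, \eqref{embol} by the median-based embeddings \eqref{p1}, \eqref{eq:poincarealpha}, \eqref{intol} for $W^{1,A}(\Omega)$. First I would reduce the statement: since $-u$ weakly solves a Neumann problem of the same type -- the one obtained by replacing $\mathcal A(x,t,\xi)$, $\mathcal B(x,t,\xi)$ with $-\mathcal A(x,-t,-\xi)$, $-\mathcal B(x,-t,-\xi)$, which still satisfy \eqref{hpA}--\eqref{hpB} with the same $A,B,C,D,E,f$ -- it suffices to bound $u$ from above. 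Setting $m={\rm med}(u)$, $E_t=\{u>m+t\}$ and $\mu(t)=|E_t|$, the definition of the median gives $\mu(t)\le\tfrac12|\Omega|$ for $t>0$, so $\min\{|E_t|,|\Omega\setminus E_t|\}=\mu(t)$, and the goal becomes to show that $\mu$ vanishes beyond some finite level. By embedding \eqref{embinf1} (valid under \eqref{intconvalpha}) one may assume that $A$ fulfils \eqref{intdivalpha}, so that $A_\alpha$ and $\widehat A_\alpha$ are finite-valued.

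Next I would establish the global Caccioppoli-type inequalities. For $t_0<t<\tau$, test \eqref{neu-weak-sol} with the truncation $\phi_{t,\tau}=\min\{(u-m-t)_+,\tau-t\}\in V^1K^A(\Omega)$, whose gradient equals $\nabla u\,\chi_{E_t\setminus E_\tau}$. Estimating the left-hand side of \eqref{neu-weak-sol} from below by \eqref{hpA} and the right-hand side from above by \eqref{hpB}, the only term requiring care is the contribution of $D(|u|)E(|\nabla u|)\phi_{t,\tau}$: I would split it via Young's inequality \eqref{young} for the Young function $A\circ E^{-1}$ -- admissible by \eqref{ipyoung} -- and its conjugate, using that $(A\circ E^{-1})(E(|\nabla u|))\le A(|\nabla u|)$; hypothesis \eqref{ipDn}, together with the factor $\tfrac1k$ with $k>1$, then lets the resulting multiple of $\int_{E_t\setminus E_\tau}A(|\nabla u|)\,dx$ be absorbed on the left. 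What remains is a bound of $\int_{E_t\setminus E_\tau}A(|\nabla u|)\,dx$ by integrals over $E_t\setminus E_\tau$ of $A_\alpha(\sigma|u|)$ and of $f\,\phi_{t,\tau}$, hypotheses \eqref{ipBn}--\eqref{ipCn} having been invoked to dominate the $B$- and $C$-contributions by the Sobolev conjugate $A_\alpha$.

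Then I would reverse the inequality. Since $\phi_{t,\tau}$ has zero median, the sharp Orlicz--Lorentz Sobolev inequality \eqref{intol} -- and, where handier, its Orlicz form \eqref{eq:poincarealpha} -- applies to it and bounds $\int_{E_t\setminus E_\tau}A(|\nabla u|)\,dx$ from below by a rearrangement functional of $\phi_{t,\tau}$, that is, by an explicit expression in $\mu$; the $f$-term is treated by the refined Young inequality \eqref{youngcianchi} with parameter $\alpha$, whose first summand is once more absorbed through \eqref{intol} while the second is controlled using $f\in L^{M,\infty}(\Omega)$, via $f^{**}(s)\le\|f\|_{L^{M,\infty}(\Omega)}M^{-1}(1/s)$; the $A_\alpha$-term is bookkept against the deeper level sets. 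Feeding the matching lower and upper bounds for $\int_{E_t\setminus E_\tau}A(|\nabla u|)\,dx$ into the coarea formula together with the relative isoperimetric inequality \eqref{isop} -- the single point at which $\Omega\in\mathcal J_{1/\alpha'}$ enters -- and letting $\tau\downarrow t$ produces an autonomous differential inequality for $\mu$, whose coefficients involve $A,A_\alpha,\widehat A_\alpha$ and $M$ exactly through the combinations appearing in \eqref{ipBn}--\eqref{ipDn} and \eqref{new M}. A final analysis of this differential inequality -- integrating from $t$ to $+\infty$, and using \eqref{dis-A-tildeA} and the duality \eqref{deltanabla} when $A\in\nabla_2$ -- shows, precisely under \eqref{ipyoung}, \eqref{ipBn}--\eqref{ipDn} and \eqref{new M}, that $\mu(t)=0$ for $t$ past a finite threshold $T$; hence $u\le m+T$ a.e.\ in $\Omega$, and the opening reduction yields $u\in L^\infty(\Omega)$.

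The hard part will be the passage from the level-set estimates to a \emph{sharp} differential inequality for $\mu$. A Moser-type iteration on the levels is not available here, since iterating optimal Orlicz--Sobolev embeddings need not reproduce an optimal one; instead one must match Young functions and exponents so delicately that the balance conditions \eqref{ipBn}--\eqref{new M} come out as exactly the boundedness thresholds -- in particular permitting $B$ and $C$ to grow critically with respect to $A_\alpha$, and $D$ according to the Young conjugate of $A\circ E^{-1}$ composed with $A_\alpha$. The exponent $\alpha$ pervades all these conditions because it is the exponent in the relative isoperimetric inequality supported by $\Omega$; specializing to $\alpha=n$, dropping the median, and using \eqref{embw0}, \eqref{embol} in place of \eqref{p1}, \eqref{intol} recovers the Dirichlet argument of Theorem~\ref{thm:dir}.
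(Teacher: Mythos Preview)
Your high-level strategy is the paper's: test with truncations of $u$, invoke \eqref{hpA}--\eqref{hpB} and \eqref{ipBn}--\eqref{ipDn}, control the $DE$-term by Young's inequality for $A\circ E^{-1}$, treat the $f$-term via \eqref{youngcianchi} with the $\widehat A_\alpha$-part absorbed through \eqref{intol}, and bring in coarea plus the relative isoperimetric inequality \eqref{isop}. Two points, however, do not match what actually works and would leave real gaps.

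First, the passage to a differential inequality. The paper does \emph{not} use a double truncation and then let $\tau\downarrow t$; that limit is degenerate (the test function tends to zero), and dividing by $\tau-t$ beforehand does not interact well with the nonlinear $A$. Instead one tests with the single truncation $\phi=\sgn(u)(|u|-t)_+$ (so $\operatorname{med}(\phi)=0$ once $t>\operatorname{med}(|u|)$), obtains the integral bound \eqref{dir-4}, and then applies \emph{Jensen} together with coarea and \eqref{isop} to get \eqref{dir-4.2}. This produces a differential inequality not for $\mu$ but for
\[
\eta(t)=\int_t^\infty \mu(\tau)^{1/\alpha'}\,d\tau,\qquad \eta'(t)=-\mu(t)^{1/\alpha'},
\]
namely \eqref{dir-4.4}. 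A companion step you do not mention is the absorption of $\int_{\Omega_t}A_\alpha(\sigma|u|)\,dx$: one splits $|u|\le(|u|-t)+t$, applies \eqref{eq:poincarealpha} to $(|u|-t)_+$ for $t$ large, and is left with the harmless term $A_\alpha(ct)\mu(t)$; this is \eqref{intAn}.

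Second, the ``final analysis''. This is the crux and cannot be dispatched by ``integrating from $t$ to $+\infty$'' and \eqref{dis-A-tildeA}--\eqref{deltanabla}; in particular $A\in\nabla_2$ is \emph{not} assumed in Theorem~\ref{thm:neu} and is not used. The paper argues by contradiction: assuming $\esssup|u|=\infty$, one splits $[t_1,\infty)$ into $U=\{t:\Psi(\mu(t))\ge A_\alpha(ct)\}$ and its complement $W$. On $U$, inequality \eqref{dir-4.4} becomes $\eta\le c\,\Phi((-\eta')^{\alpha'})$ with $\Phi$ as in \eqref{def-M}; Lemma~\ref{prop:M} (monotonicity of $\Phi$ near $0$ and the equivalence (iii)) then yields $|U|<\infty$ via \eqref{meas-U}. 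On $W$ one has \eqref{dir-4.4.1}, and a comparison with $\int^\infty A^{-1}(A_\alpha(c\tau))^{-1/\alpha'}d\tau$---together with a De~L'H\^opital argument and \cite[Lemma~2]{Cianchi-boundedness}---forces a contradiction. Your sketch neither identifies the auxiliary function $\eta$ nor the $U$/$W$ dichotomy, and omits Lemma~\ref{prop:M}, which is where condition \eqref{new M} is actually used.

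In short: keep your list of ingredients, but replace the ``$\tau\downarrow t$'' mechanism by the Jensen--coarea--isoperimetric chain \eqref{dir-4.2} leading to the ODE for $\eta$, and be prepared for the substantial endgame (Lemma~\ref{prop:M}, the $U$/$W$ split, and the contradiction via \cite[Lemma~2]{Cianchi-boundedness}); drop the $\nabla_2$ reference, which belongs to the Robin case only.
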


\begin{remark} 
{\rm
Observations parallel to Remarks \ref{oss1} and \ref{oss2} apply with regard to 
Theorem \ref{thm:neu}.  
In particular, the identity
    \begin{equation}\label{june1}
    \int_s^{\infty} \widetilde A \big(\lambda r^{- 1/{\alpha'}} s M^{-1}(1/s) \big) dr= \alpha ' s^{\alpha'}M^{-1}(1/s)^{\alpha'}\int_0^{s^{ 1/{\alpha}}M^{-1}(1/s)}\frac{\widetilde A (\lambda r)}{r^{1+\alpha'}}\,dr \quad \text{for $s >0$}
   \end{equation}
can be of use in connection with condition \eqref{new M}.}
\end{remark}

\begin{example}
{\rm Assume that $A$ is a Young function as in  \eqref{exA}. In view of condition \eqref{intconvalpha}, assumptions \eqref{apr24} have to be replaced by
%
\begin{equation*}
\text{either $p=1$ and $\delta \geq 0$, or $1<p< \alpha$, or $p=\alpha$ and  {$\delta \le \alpha -1$}.}
\end{equation*}
Moreover,  
  $$
    A_\alpha(t) \approx
    \begin{cases}
t^{\frac{\alpha p}{\alpha -p}}(\log t)^{\frac{\alpha \delta}{\alpha -p}}&\ \text{ if } \ 1\leq p<\alpha\\
e^{t^{\frac{\alpha }{\alpha -1-\delta}}}&\ \text{ if } \ p=\alpha\ \text{ and }\ \delta <\alpha -1\\
e^{e^{t^{\frac{\alpha }{\alpha -1}}}}&\ \text{ if } \ p=\alpha\ \text{ and }\ \delta =\alpha -1
\end{cases}
\qquad \text{near infinity.}
  $$
Assume that the functions $B$ and $C$ in conditions \eqref{hpA} and \eqref{hpB} satisfy the inequalities
    $$
    B(\sigma t)\leq 
    \begin{cases}
t^{\frac{\alpha p}{\alpha -p}}(\log t)^{\frac{\alpha \delta}{\alpha -p}}&\ \text{ if } \ 1\leq p<\alpha \\
e^{t^{\frac{\alpha }{\alpha -1-\delta}}}&\ \text{ if } \ p=\alpha\ \text{ and }\ \delta <\alpha -1 \\
e^{e^{t^{\frac{\alpha }{\alpha -1}}}}&\ \text{ if } \ p=\alpha\ \text{ and }\ \delta =\alpha -1
    \end{cases} \quad \text{near infinity,}
$$
and
$$
    C(\sigma t)\leq 
    \begin{cases}
t^{\frac{\alpha p}{\alpha -p}-1}(\log t)^{\frac{\alpha \delta}{\alpha -p}}&\ \text{ if } \ 1\leq p<\alpha\\
e^{t^{\frac{\alpha }{\alpha -1-\delta}}}&\ \text{ if } \ p=\alpha\ \text{ and }\ \delta <\alpha -1\\
e^{e^{t^{\frac{\alpha }{\alpha -1}}}}&\ \text{ if } \ p=\alpha\ \text{ and }\ \delta =\alpha -1
\end{cases}
\quad \text{near infinity,}
    $$
for some constant $\sigma >0$. Moreover, let $D=E=0$.
\\ Let $f\in L^{M, \infty}(\Om)$, where $M$ is a function obeying \eqref{exPhi}. 
\\ By Theorem \ref{thm:neu},  in view of condition \eqref{new M} any weak solution  $u$ to  problem \eqref{neumann} is bounded provided that 
 $$
	\begin{cases}
    \displaystyle \text{either } q> \frac{\alpha}{p}, \\
	\text{ or } \displaystyle q= \frac{\alpha}{p}>1 \text{ and } \beta p+ \alpha \delta> \alpha(p-1),\\
\text{ or }
	\displaystyle q= \frac{\alpha}{p}=1, \delta \le \alpha-1\ \text{and}\   \beta+\delta>\alpha-\frac{1}{\alpha}. \\
    \end{cases}
	$$
}

\end{example}

\medskip

\subsection{Robin problems}\label{secrob}

Assume now that  $\Omega$ is an admissible  domain in $\RN$, as defined in Section \ref{spaces}.
Here we focus on the Robin problem
	\begin{equation}
	\label{robin}
	\begin{cases}
	-\divergenz \big(\mathcal A(x, u, \nabla u)\big)= \mathcal B(x, u, \nabla u) \quad & \text{in } \Om \\
	\mathcal A(x, u, \nabla u) \cdot \nu =  \mathcal C(x, u) \quad & \text{on $\partial \Omega$,}
	\end{cases}
	\end{equation}
under the growth conditions \eqref{hpA}, \eqref{hpB} and \eqref{hpC}. 
 A function $u \in V^1 K^A(\Om)$ is said to be a weak solution to problem \eqref{robin} if 
	\begin{equation}
	\label{weak-sol}
	\into \mathcal A(x, u, \nabla u) \cdot \nabla \phi \; dx= \into \mathcal B(x, u, \nabla u)\, \phi \; dx + \intor \mathcal C(x, u)\, \phi \; d \ch^{n-1} 
	\end{equation}
for every $\phi \in V^1 K^{A}(\Om)$.   For simplicity of notation, here, and in what follows, we just write $u$ instead of ${\rm Tr} \,u$ in integrals over $\partial \Omega$.
\\ Our boundedness criterion for solutions to problem \eqref{robin} requires the same assumptions on the functions $B, C, D, E, M$ as those for the Dirichlet problem \eqref{dirichlet} and, of course,  also entails bounds in a similar vein for the functions $F$ and $g$ involved in condition \eqref{hpC}. The maximal admissible growth of the function $F$ depends on the Sobolev trace  conjugate $A_T$ of $A$, defined by \eqref{AT}, and reads
\begin{equation} \label{dis-AT}
			F(t) \le \frac{A_T(\sigma t)}{t}  \quad \text{for $t>t_0$,}
			\end{equation}
for some constants $\sigma>0$ and  $t_0\geq  0$. 
 The integrability condition on the function $g$ takes the form
\begin{equation}\label{ipg}
g \in L^{N, \infty}(\partial \Omega),
\end{equation}
for some Young function $N$ such that
	\begin{equation}
	\label{int G} \int_0 A^{-1}\bigg(\frac {1}{s^{n'}} \int_0^s \widetilde A \bigg(\frac 1r \int_0^r
 N^{-1}(1/\rho) \,d\rho\bigg) r^{\frac 1{n-1}}\, dr\bigg) \frac{ds}{s^{\frac {n-2}{n-1}}}<\infty.
%
%
	\end{equation}
 Finally, unlike the results for Dirichlet and Neumann problems presented above, the function $A$ is required to additionally satisfy the $\nabla_2$-condition near infinity, which, loosely speaking, prevents $A$ from having  an almost linear growth. 
 This is due to the fact that the   trace  conjugate $A_T$ of $A$ boils down to $A$ itself when the latter grows linearly. Indeed, classically $A_T=A$ if $A(t)=t$ for $t \geq 0$.

\begin{theorem}
\label{thm:rob}
Let $\Om$ be an admissible domain in $\RN$. Assume that $A\in \nabla_2$ near infinity and satisfies condition \eqref{intdiv}. Suppose
 that assumptions \eqref{ipu0}--\eqref{ipf} and \eqref{dis-AT}--\eqref{ipg} 
 are in force.  If $u$ is a weak solution to the Robin problem \eqref{robin}, then, $u \in L^{\infty}(\Om)$.
\end{theorem}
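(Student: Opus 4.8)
The plan is to adapt the unified strategy behind Theorems \ref{thm:dir} and \ref{thm:neu} to the Robin setting, the genuinely new ingredient being the boundary terms governed by $\mathcal C$ and the use of sharp Orlicz--Sobolev \emph{trace} inequalities. By replacing $u$ with $-u$ and symmetrizing the growth conditions \eqref{hpA}--\eqref{hpC} it suffices to prove that $\esssup_\Om u<\infty$, i.e. that the distribution function $\mu(t)=|\{u>t\}|$ vanishes for large $t$. Fix $t\ge t_0$ and test the weak formulation \eqref{weak-sol} with (a truncation of) $\phi=(u-t)_+\in V^1K^A(\Om)$. On the left-hand side assumption \eqref{hpA} yields the lower bound $\int_{\{u>t\}}A(|\nabla u|)\,dx-\int_{\{u>t\}}B(u)\,dx$, while on the right-hand side \eqref{hpB} and \eqref{hpC} produce the contributions of $f$, $C(u)$, $D(u)E(|\nabla u|)$ integrated against $(u-t)_+$ over $\{u>t\}$, together with the boundary contributions of $g$ and $F(u)$ integrated against $(u-t)_+$ over $\partial\Om\cap\{u>t\}$. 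The mixed gradient term $D(u)E(|\nabla u|)(u-t)_+$ is absorbed by Young's inequality \eqref{young} for the conjugate pair $(A\circ E^{-1},(A\circ E^{-1})^{\sim})$ --- legitimate thanks to \eqref{ipyoung} --- which combined with \eqref{ipD} gives $D(u)E(|\nabla u|)(u-t)_+\le \tfrac1kA(|\nabla u|)+\tfrac1kA_n(\sigma u)$ on $\{u>t\}$, the first summand being moved to the left; using \eqref{ipB}, \eqref{ipC} and \eqref{dis-AT} to bound $B(u)$, $C(u)(u-t)_+$ and $F(u)(u-t)_+$ by $A_n(\sigma u)$, $A_n(\sigma u)$ and $A_T(\sigma u)$ respectively, one reaches a global Caccioppoli-type inequality
\begin{equation*}
\int_{\{u>t\}}A(|\nabla u|)\,dx \lesssim \int_{\{u>t\}}\big(A_n(\sigma u)+f\,(u-t)_+\big)\,dx+\int_{\partial\Om\cap\{u>t\}}\big(A_T(\sigma u)+g\,(u-t)_+\big)\,d\HN .
\end{equation*}

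Next I would split each superlevel set at level $2t$. On $\{t<u\le 2t\}$ the critical functions $A_n(\sigma u)$ and $A_T(\sigma u)$ are dominated by constants depending on $t$ alone, so these are lower-order contributions, to be handled later through the isoperimetric inequality; on $\{u>2t\}$ one has $u\le 2(u-t)_+$, whence $A_n(\sigma u)\le A_n(2\sigma(u-t)_+)$ and $A_T(\sigma u)\le A_T(2\sigma(u-t)_+)$, and these must be \emph{reabsorbed} into $\int A(|\nabla u|)$. Here the optimal embeddings are indispensable, since iterating plain Orlicz--Sobolev inequalities destroys optimality: I would instead invoke the Orlicz--Lorentz inequality \eqref{intol} (with $\alpha=n$, as an admissible domain belongs to $\mathcal J_{1/n'}$) and the sharp trace inequality \eqref{traceint}, applied to $v=(u-t)_+$ --- whose median vanishes as soon as $t$ is large enough that $\mu(t)\le|\Om|/2$ --- and then estimate the critical integrals against these energies by means of the refined Young-type inequality \eqref{youngcianchi}, the Hardy--Littlewood inequalities \eqref{HL}--\eqref{HLrev}, and the weak-type memberships $f\in L^{M,\infty}(\Om)$, $g\in L^{N,\infty}(\partial\Om)$. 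The balance conditions \eqref{new Mn} on $M$ and \eqref{int G} on $N$ are precisely what make the resulting right-hand sides finite and reabsorbable; the $\nabla_2$-hypothesis on $A$ enters here, as indicated after \eqref{dis-AT}, both to guarantee that $A_T$ is genuinely superlinear and, via \eqref{deltanabla}, to move freely between the ``for some $\lambda$'' and ``for every $\lambda$'' forms of these conditions (cf. Remark \ref{oss1}). The boundary part additionally calls for the new sharp trace inequality in the Lorentz $\Lambda$-spaces built on $N$ established in Section \ref{traceineq}.

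Having absorbed the critical terms, I would convert the outcome into a differential inequality for $\mu$. By the coarea formula $-\mu'(t)=\int_{\{u=t\}}|\nabla u|^{-1}\,d\HN$ for a.e. $t$, and $-\tfrac{d}{dt}\int_{\{u>t\}}A(|\nabla u|)\,dx=\int_{\{u=t\}}A(|\nabla u|)|\nabla u|^{-1}\,d\HN$; applying Jensen's inequality on the level set $\{u=t\}$ and the relative isoperimetric inequality \eqref{isop} with $\alpha=n$ --- this is the stage where the regularity of the admissible domain $\Om$ comes in, and where the trace inequality also serves to bound $\HN(\partial\Om\cap\{u>t\})$ in the surface integrals --- one obtains an autonomous ordinary differential inequality $-\mu'(t)\ge \Phi\big(t,\mu(t)\big)$ for $t$ beyond some $t_1$, with $\Phi$ assembled from $A$, $A_n$, $A_T$, $M$ and $N$. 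Finally, the integrability encoded in \eqref{ipB}--\eqref{ipD}, \eqref{dis-AT}, \eqref{new Mn} and \eqref{int G} forces, by the standard analysis of such a differential inequality as in \cite{Cianchi-boundedness, Talenti79, Talenti90}, that $\mu$ reaches $0$ at a finite level; hence $\esssup_\Om u<\infty$. Running the argument for $-u$ too gives $u\in L^\infty(\Om)$.

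I expect the main obstacle to be the absorption step of the second paragraph: coordinating the optimal Orlicz--Lorentz embedding, the optimal Orlicz--Sobolev trace embedding, the sharp Young-type inequality \eqref{youngcianchi}, and the new Lorentz-$\Lambda$ trace inequality, all with the correct dependence on the truncation level $t$, so that the critical interior and boundary contributions are \emph{genuinely} reabsorbed while the remainder still yields an \emph{integrable} differential inequality. Relative to the Dirichlet and Neumann cases the extra difficulty is exactly the boundary layer --- one must keep $v=(u-t)_+$ and its trace under simultaneous control --- which is why both $A\in\nabla_2$ near infinity and the dedicated trace inequality of Section \ref{traceineq} are required.
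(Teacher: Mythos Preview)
Your overall strategy aligns with the paper's: test with truncations, estimate the interior terms as in the Neumann proof, treat the boundary $g$-term via the Lorentz-$\Lambda$ trace inequality of Section~\ref{traceineq} (specifically Corollary~\ref{cortrace}), and handle the boundary $F$-term via the optimal trace embedding~\eqref{traceint}. Two technical points, however, differ from the paper and deserve correction.

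First, the paper does \emph{not} apply Jensen on the level set $\{u=t\}$ to obtain an inequality of the form $-\mu'(t)\ge\Phi(t,\mu(t))$. It applies Jensen on the full superlevel set $\Omega_t$ (inequality~\eqref{dir-4.2} with $\alpha=n$), yielding a bound for $\eta(t)=\int_t^\infty\mu(\tau)^{1/n'}\,d\tau$ of the form $\eta(t)\le c(-\eta'(t))^{n'}A^{-1}\big(A_n(ct)+\Psi(\mu(t))+\Xi(\mu(t))\big)$, where $\Psi$ and $\Xi$ encode the $f$ and $g$ contributions. The endgame then partitions $[t_1,\infty)$ into sets according to which of $A_n(ct)$, $\Psi$, $\Xi$ dominates: the latter two have finite measure by Lemmas~\ref{prop:M} and~\ref{prop:G}, and on the residual set one runs the contradiction argument from \cite{Cianchi-boundedness}. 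Your level-set Jensen would instead control $-\tfrac{d}{dt}\int_{\Omega_t}A(|\nabla u|)$, which does not pair cleanly with the upper bound on $\int_{\Omega_t}A(|\nabla u|)$ itself; and your ``lower-order'' label for the $A_n(2\sigma t)\mu(t)$ contribution is misleading, since this is precisely the term that drives the final contradiction.

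Second, your account of how $A\in\nabla_2$ enters is too vague. The crucial mechanism is Lemma~\ref{lemma:AT2}: from $A\in\nabla_2$ globally (after normalizing near zero) one gets $A_T\in\nabla_2$ (Lemma~\ref{lemma:ATdelta2}), hence $\sup_{t>0}A_T(\tau t)/(\tau A_T(t))\to 0$ as $\tau\to 0^+$. This is exactly what enables the absorption~\eqref{apr20-1} of $\int_{E_t}A_T(2\sigma(|u|-t))\,d\HN$ into an arbitrarily small multiple of $\int_{\Omega_t}A(|\nabla u|)\,dx$ via~\eqref{traceint}. Moreover $\widetilde A\in\Delta_2$ (by~\eqref{deltanabla}) is used in~\eqref{3.7} to obtain the pointwise bound $\widetilde A\big(A_T(2\sigma t)/(\varepsilon t)\big)\le cA_n(2\sigma t)$, folding the remaining boundary $A_T$-term into the interior $A_n$-machinery.
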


\begin{example}
{\rm Let $\Omega$ be an admissible domain.  Hence, $\Om\in \mathcal J_{1/n'}$. Assume that $A$  is as in Example \ref{ex:dir}, save that,  in view of the requirement that $A\in \nabla_2$ near infinity, now only the alternative $p>1$ and $\delta \in \mathbb R$ is admissible.  By embedding \eqref{embinf1}, with $\alpha =n$, we may limit ourselves to considering exponents $p$ and $\delta$ fulfilling condition \eqref{apr24}.
%
%
\\ One can show that
    $$
    A_T(t) \approx
    \begin{cases}
t^{\frac{p( n-1)}{n -p}}(\log t)^{\frac{ \delta(n-1)}{n -p}}&\ \text{ if } \ 1< p<n \\
e^{t^{\frac{n}{n -1-\delta}}}&\ \text{ if } \ p=n\ \text{ and }\ \delta <n -1\\
e^{e^{t^{\frac{n }{n -1}}}}&\ \text{ if } \ p=n\ \text{ and }\ \delta =n -1
\end{cases}
\qquad \text{near infinity.}
  $$
Therefore, assumption 
 \eqref{dis-AT} reads
\begin{equation}
 \label{hptrace}
    F(\sigma t)\leq 
    \begin{cases}
t^{\frac{p( n-1)}{n -p}-1}(\log t)^{\frac{ \delta(n-1)}{n -p}}&\ \text{ if } \ 1< p<n \\
e^{t^{\frac{n}{n -1-\delta}}}&\ \text{ if } \ p=n\ \text{ and }\ \delta <n -1 \\
e^{e^{t^{\frac{n }{n -1}}}}&\ \text{ if } \ p=n\ \text{ and }\ \delta =n -1
\end{cases}
\qquad \text{near infinity,}
 \end{equation}
for  some constant $\sigma >0$.
\\ Suppose that $f \in L^{M,\infty}(\Om)$, where $M$ is as in Example \ref{ex:dir}. Also, assume that 
$g \in L^{N,\infty}(\partial \Om)$, where $N$ is a Young function such that
\begin{equation*}
N(t)  \approx t^r (\log t)^{\gamma} \quad \text{near infinity,}
\end{equation*}
 where either  $r> 1$  and $\gamma \in \mathbb R$, or  $r=1$ and $\gamma \ge 0$. Condition \eqref{int G} reduces to requiring that
\begin{equation*} 
	\begin{cases}
	\text{either } \displaystyle  r> \frac{n-1}{p-1},  \\
	 \text{or }  \displaystyle  r= \frac{n-1}{p-1}> 1, \ \text{and} \ \gamma (p-1)+ \delta(n-1)> (n-1)(p-1), \\
	 \text{or } \displaystyle  r= 1,\ p=n,\ \delta\leq n-1,\ \text{and}\ \gamma + \delta> n+1-\frac1n\,.
	\end{cases}
	\end{equation*}
Under these assumptions on $f$ and $g$, if $D=E=0$ and inequalities \eqref{Bexdir}, \eqref{Cexdir}, \eqref{hptrace} are satisfied, then Theorem \ref{thm:rob} ensures that any weak solution $u$ to the Robin problem \eqref{robin} is bounded.
}

\end{example}

\section{Proofs of Theorems \ref{thm:dir} and \ref{thm:neu}} 

A detailed proof will be provided for Theorem \ref{thm:neu}, dealing with Neumann problems. The argument supporting Theorem \ref{thm:dir}, which concerns Dirichlet problems, is analogous, and even simpler. We thus limit ourselves to sketching it, and to highlighting the necessary modifications.

We begin with a technical lemma, dealing with the integrand appearing in condition \eqref{new M}. In preparation for its statement and proof, a couple of auxiliary functions are introduced.

Let $A$ and $M$ be Young functions fulfilling condition \eqref{new M}. Define, for $\lambda >0$, the function $\Psi_\lambda : (0, \infty) \to [0, \infty]$ as
 \begin{equation}
	\label{N}
	\Psi_\lambda(s)= \frac{1}{s} \bigg(\int_0^s \widetilde A \l(\lambda r^{ 1/\alpha}  M^{-1}(1/r) \r) dr
	+\int_s^{\infty} \widetilde A \l(\lambda r^{- 1/{\alpha'}} s M^{-1}(1/s) \r) dr \bigg) \quad \text{for $s>0$.}
	\end{equation}
Hence, condition \eqref{new M} takes the form
\begin{align}\label{new N}
       \int_0s^{- 1/{\alpha'}}A^{-1 }\l(\Psi_\lambda(s)\r)ds<\infty
   \end{align}	
for every $\lambda >0$.
Next, let $\Phi_\lambda\colon [0, \infty) \to [0, \infty]$ be the function defined by
     \begin{equation}
	\label{def-M}
	\Phi_\lambda (s)= s A^{-1} \l(\Psi_\lambda(s)\r) \quad \text{for $s>0$.}
	\end{equation}



\begin{lemma}
\label{prop:M}
Let $A$ and $M$ be Young functions fulfilling condition \eqref{new M} for some $\alpha \geq n$.    Assume that $M$ is differentiable and
\begin{equation}\label{june5}
 \Big(\frac {M(t)}t \Big)'>0 \qquad \text{for $t >0$.}
\end{equation}
Let $\lambda >0$ and let $\Phi_\lambda$ be the function given by \eqref{def-M}. Then,
    \begin{itemize}
   \item[(i)] $\lim_{s\to 0^+} \Phi_\lambda(s)=0$;
        \item[(ii)] Either there exists $\overline s>0$ such that $\Phi_\lambda $ vanishes in $s\in(0, \overline s)$, or there   exists $\overline s >0$ such that
 $\Phi_\lambda$ is strictly increasing in $(0, \overline s)$;
%
        \item[(iii)] Condition \eqref{new N} is equivalent to 
\begin{equation*}
\int_0 \frac{1}{\Phi_\lambda^{-1}(\tau)^{\frac 1{\alpha'}}} \; d\tau< \infty.
\end{equation*}
    \end{itemize}
     
\end{lemma}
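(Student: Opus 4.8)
The three assertions are interlocked, so the plan is to prove (ii) first, deduce (i) from it, and then obtain (iii) by a change of variables combined with an integration by parts. Throughout I would set $w(s)=s^{1/\alpha}M^{-1}(1/s)$, $h(s)=sM^{-1}(1/s)$, $G(s)=s\Psi_\lambda(s)$, and write $v(s)=A^{-1}(\Psi_\lambda(s))$, so that $\Phi_\lambda(s)=s\,v(s)$ and $s^{-1/\alpha'}A^{-1}(\Psi_\lambda(s))=\Phi_\lambda(s)\,s^{-1-1/\alpha'}$; thus \eqref{new N} is exactly the statement $\int_0\Phi_\lambda(s)\,s^{-1-1/\alpha'}\,ds<\infty$. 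By \eqref{june5} the function $M$ is strictly increasing, so $M^{-1}$, hence $w$, $h$, and (via the reformulation \eqref{june1} of the second integral) $\Psi_\lambda$, are continuous on $(0,\infty)$; since the standing modification of $A$ near $0$ excludes a flat piece of $A$, the inverse $A^{-1}$ is continuous with $A^{-1}(0)=0$, so $\Phi_\lambda$ is continuous as well.

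For (ii), I would first show that $G$ is non-decreasing. Using the substitution behind \eqref{june1} and differentiating, the term $\widetilde A(\lambda w(s))$ produced by $\frac{d}{ds}\int_0^s\widetilde A(\lambda w(r))\,dr$ cancels exactly against the lower-endpoint contribution in $\frac{d}{ds}\int_s^\infty\widetilde A(\lambda r^{-1/\alpha'}h(s))\,dr$, leaving
\[
G'(s)=\lambda\,h'(s)\int_s^\infty\widetilde a\big(\lambda r^{-1/\alpha'}h(s)\big)\,r^{-1/\alpha'}\,dr\ \ge\ 0,
\]
where $\widetilde a$ is the density of $\widetilde A$ and $h'(s)=t-\tfrac{M(t)}{M'(t)}>0$ at $t=M^{-1}(1/s)$ by \eqref{june5}. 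I would then split into two cases. If $\widetilde A(\lambda w(r))=0$ for a.e.\ $r$ in some interval $(0,\overline s)$ — equivalently $\lambda w(r)\le a(0^+)$ there — then also $\lambda w(s)\le a(0^+)$ for a.e.\ $s\in(0,\overline s)$, so both integrals defining $\Psi_\lambda$ vanish on $(0,\overline s)$ and, by continuity, $\Phi_\lambda=s\,A^{-1}(0)=0$ there. Otherwise $\int_0^s\widetilde A(\lambda w(r))\,dr>0$ for every $s>0$, hence $\Psi_\lambda>0$ on $(0,\infty)$; writing $\Psi_\lambda(s)=A(v(s))$ and using the equality case $a(v)v=A(v)+\widetilde A(a(v))=\Psi_\lambda(s)+\widetilde A(a(v))$ of Young's inequality \eqref{young} together with $\Psi_\lambda'(s)=\tfrac1s\big(G'(s)-\Psi_\lambda(s)\big)$, one obtains
\[
\frac{d}{ds}\log\Phi_\lambda(s)=\frac1s+\frac{v'(s)}{v(s)}=\frac{\widetilde A\big(a(v(s))\big)+G'(s)}{s\,a(v(s))\,v(s)}\ \ge\ 0,
\]
so that $\Phi_\lambda$ is increasing near $0$.

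For (i), in the first alternative of (ii) the conclusion is immediate; in the second, $\Phi_\lambda$ is monotone near $0$, so $L:=\lim_{s\to0^+}\Phi_\lambda(s)$ exists in $[0,\infty)$, and if $L>0$ then $\int_0\Phi_\lambda(s)\,s^{-1-1/\alpha'}\,ds\ge L\int_0 s^{-1-1/\alpha'}\,ds=+\infty$ since $1+1/\alpha'>1$, contradicting the standing hypothesis \eqref{new M}. Hence $L=0$.

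For (iii): when $\Phi_\lambda\equiv0$ near $0$ both conditions hold trivially (then $\Phi_\lambda^{-1}(\tau)\to\overline s>0$ as $\tau\to0^+$, so $\Phi_\lambda^{-1}(\tau)^{-1/\alpha'}$ is bounded near $0$). Otherwise $\Phi_\lambda$ is continuous, strictly increasing, and vanishes at $0^+$ by (i), so the substitution $\tau=\Phi_\lambda(s)$ gives $\int_0\Phi_\lambda^{-1}(\tau)^{-1/\alpha'}\,d\tau=\int_0 s^{-1/\alpha'}\,d\Phi_\lambda(s)$, and the integration by parts $\int_\epsilon^\delta s^{-1/\alpha'}\,d\Phi_\lambda(s)=\big[s^{-1/\alpha'}\Phi_\lambda(s)\big]_\epsilon^\delta+\tfrac1{\alpha'}\int_\epsilon^\delta\Phi_\lambda(s)\,s^{-1-1/\alpha'}\,ds$ reduces matters to controlling $\epsilon^{-1/\alpha'}\Phi_\lambda(\epsilon)$ as $\epsilon\to0^+$; monotonicity of $\Phi_\lambda$ gives $\alpha'(2^{1/\alpha'}-1)\,\epsilon^{-1/\alpha'}\Phi_\lambda(\epsilon/2)\le\int_{\epsilon/2}^\epsilon\Phi_\lambda(s)\,s^{-1-1/\alpha'}\,ds$, which tends to $0$ as soon as the integral in \eqref{new N} converges, whence the two integral conditions are equivalent. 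I expect the main obstacle to be the monotonicity step in (ii): justifying the differentiation under the integral sign and the endpoint cancellation, and handling the flat part of $\widetilde A$ — equivalently a possible linear behaviour of $A$ near $0$ — which is precisely where the strictness of the monotonicity of $\Phi_\lambda$ requires care.
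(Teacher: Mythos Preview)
Your proof is correct and reaches the same conclusions as the paper, but by somewhat different routes in (ii) and (iii).

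For (ii), the paper also sets $T_\lambda=G$ and computes $T_\lambda'$, but via the transformed expression \eqref{june1}; it then argues case by case according to the sign of $\Psi_\lambda'$: where $\Psi_\lambda'>0$ one differentiates $\Phi_\lambda=s\,A^{-1}(\Psi_\lambda)$ directly, where $\Psi_\lambda'<0$ one writes $\Phi_\lambda=\dfrac{A^{-1}(\Psi_\lambda)}{\Psi_\lambda}\,T_\lambda$ and uses the monotonicity of $t\mapsto A^{-1}(t)/t$ together with $T_\lambda'>0$, and where $\Psi_\lambda'=0$ one has $\Phi_\lambda'=A^{-1}(\Psi_\lambda)>0$. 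Your single identity
\[
\frac{d}{ds}\log\Phi_\lambda(s)=\frac{\widetilde A\big(a(v(s))\big)+G'(s)}{s\,a(v(s))\,v(s)},
\]
obtained from the Fenchel--Young equality $a(v)v=A(v)+\widetilde A(a(v))$, packages all three cases at once and is arguably cleaner; it also makes transparent exactly when strictness could fail (namely when simultaneously $\widetilde A(a(v))=0$ and $G'=0$). Your caveat about a possible linear piece of $A$ near $0$ is well placed: note that the standing modification \eqref{convint0alpha} rules out a \emph{flat} piece of $A$ but not a \emph{linear} one, so $\widetilde A$ may still vanish on $[0,a(0^+)]$; this is the same delicate point implicit in the paper's claim $T_\lambda'>0$.

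For (iii), the paper avoids the boundary terms altogether by writing $\Phi_\lambda^{-1}(\tau)^{-1/\alpha'}$ as an integral and applying Fubini:
\[
\int_0\frac{d\tau}{\Phi_\lambda^{-1}(\tau)^{1/\alpha'}}
=\int_0\bigg(\alpha'\!\int_{\Phi_\lambda^{-1}(\tau)}^\infty s^{-1-1/\alpha'}\,ds\bigg)d\tau
=\alpha'\!\int_0 s^{-1-1/\alpha'}\Phi_\lambda(s)\,ds
=\alpha'\!\int_0 s^{-1/\alpha'}A^{-1}(\Psi_\lambda(s))\,ds,
\]
which gives the equivalence in one line. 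Your integration-by-parts route is equally valid once the boundary term $\epsilon^{-1/\alpha'}\Phi_\lambda(\epsilon)\to0$ is controlled, and you supply that control correctly from the convergence of the integral together with monotonicity of $\Phi_\lambda$.
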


\begin{proof} {Property (i) is a consequence of (ii) and of condition \eqref{new N}.} 
As for property (ii),
let us define the function $  T_\lambda :(0, \infty) \to [0, \infty]$ as
	\begin{align}\label{Tlambda}
    T_\lambda(s)&= \int_0^s \widetilde A \l(\lambda{r^{{1}/{\alpha}}}  M^{-1}(1/r) \r) dr+ 
  \alpha ' s^{\alpha'}M^{-1}(1/s)^{\alpha'}\int_0^{s^{ 1/{\alpha}}M^{-1}(1/s)}\frac{\widetilde A (\lambda\varrho)}{\varrho^{1+\alpha'}}\,d\varrho \quad \text{for $s>0$.}
    \end{align}
By equations \eqref{N} and \eqref{june1},
    \begin{align}
     \Psi_\lambda(s)= \frac{1}{s} T_\lambda(s) \quad \text{for $s>0$.} \label{def-N1}
	\end{align}
Thanks to assumption \eqref{new N}, there exists $s_0>0$ such that $T_\lambda (s)<\infty$ for $s\in (0, s_0)$.
\\ We claim that either  $T_\lambda (s)>0$, or  $T_\lambda (s)=0$ for  every
$s\in (0, s_1)$, for some $s_1 \in (0, s_0)$. Indeed, the first alternative trivially holds if $\widetilde A(t)>0$ for every $t>0$. Assume now that there exists $t_0>0$ such that
$\widetilde A (t)=0$ for $t \in [0, t_0]$ and $\widetilde A (t)>0$ for $t >t_0$.  We may clearly assume that $s_0>t_0$.  If   there exists $s_1 \in (0, t_0)$ such that 
$$ \lambda{r^{{1}/{\alpha}}}  M^{-1}(1/r)\leq t_0  \quad \text{for $r\in (0,s_1)$,}
	    $$
then, since the function  $s M^{-1}(1/s)$ is non-decreasing, we also have that
$$ \lambda{r^{-{1}/{\alpha'}}}  s M^{-1}(1/s)\leq t_0  \quad \text{for $s\in (0,s_1)$ and $ r \in  (s_1, \infty)$.}
	    $$
Hence, owing to equation \eqref{june1},
$T_\lambda (s)=0$ for $s \in (0, s_1)$. On the other hand, if for every $s \in (0, t_0)$ there exists  $r\in (0,s)$ such that
$$ \lambda{r^{{1}/{\alpha}}}  M^{-1}(1/r)>t_0,
	    $$
then, by the continuity of the function $ \lambda{r^{{1}/{\alpha}}}  M^{-1}(1/r)$, the first integral on the right-hand side of equation \eqref{Tlambda} is positive, whence $T_\lambda (s) >0$ for $s \in (0, t_0)$.
Inasmuch as $\Phi_\lambda$ vanishes if and only if $T_\lambda$ vanishes, and $\Phi_\lambda =\infty$  if and only if $T_\lambda =\infty$, we have shown that  either there exists $\overline s >0$ such that $\Phi_\lambda(s) = 0 $ for $s\in(0, \overline s)$, or there   exists $\overline s >0$ such that $0<\Phi_\lambda (s)<\infty $ for $s\in(0, \overline s)$.
\\ In order to show that, in the latter case, $\Phi_\lambda$ is strictly increasing, it suffices to show that
\begin{equation}\label{M'}
\Phi'_\lambda(s) >0 \quad \text{for $s\in (0, \overline s)$.}
\end{equation}
To this purpose, observe that
	  \begin{align}\label{T'bis}
	&T_\lambda'(s) \\
	& =  \widetilde A \l(\lambda{s^{{1}/{\alpha}}}  M^{-1}(1/s) \r) + \alpha ' \frac{s^{{1}/{\alpha'}}}{ M^{-1}(1/s)} \widetilde A \l(\lambda{s^{{1}/{\alpha}}}  M^{-1}(1/s) \r)\Big(\frac 1\alpha s^{-{1}/{\alpha'}}M^{-1}(1/s)  + s^{1/\alpha}\big(M^{-1}(1/s)\big)'\Big) \nonumber
\\ \nonumber & \quad + \alpha ' \Big(s^{\alpha'}M^{-1}(1/s)^{\alpha'}\big)' \int_0^{s^{ 1/{\alpha}}M^{-1}(1/s)}\frac{\widetilde A (\lambda\varrho)}{\varrho^{1+\alpha'}}\,d\varrho
\\ \nonumber & = \alpha' \widetilde A \l(\lambda{s^{{1}/{\alpha}}}  M^{-1}(1/s) \r) + \alpha '\frac{s \big(M^{-1}(1/s)\big)'}{M^{-1}(1/s)}\widetilde A \l(\lambda{s^{{1}/{\alpha}}}  M^{-1}(1/s) \r)
\\ \nonumber & \quad +
 \alpha ' \Big(s^{\alpha'}M^{-1}(1/s)^{\alpha'}\Big)' \int_0^{s^{ 1/{\alpha}}M^{-1}(1/s)}\frac{\widetilde A (\lambda\varrho)}{\varrho^{1+\alpha'}}\,d\varrho
\quad \quad \quad \quad \quad \quad \quad \quad  \text{for $s\in (0, \overline s)$.} 
	\end{align}
On the other hand,
\begin{align*} 
\frac{s \big(M^{-1}(1/s)\big)'}{M^{-1}(1/s)}= \frac{-1/s \big(M^{-1}\big)'(1/s)}{M^{-1}(1/s)}\geq -1 \quad \text{for $s>0$,}
\end{align*}
where the inequality holds since $M^{-1} : [0, \infty) \to [0, \infty)$ is a concave function. Moreover, thanks to assumption \eqref{june5},
\begin{equation}\label{june4}
\Big(s^{\alpha'}M^{-1}(1/s)^{\alpha'}\Big)'>0 \quad \text{for $s\in (0, \overline s)$.}
\end{equation}
Combining inequalities  \eqref{T'bis}--\eqref{june4} tells us that
\begin{equation}\label{T'}
T_\lambda'(s) >0 \qquad \text{for $s\in (0, \overline s)$.} 
\end{equation}
Next, set
	\[
	\mathcal E = \{s\in (0, \overline s) : \, \Psi_\lambda'(s)= 0\}.
	\]
The set $\mathcal E$ is closed, inasmuch as the function $\Psi_\lambda$ is continuously differentiable. Hence, the set $ (0, \overline s) \setminus \mathcal E$ is open and therefore there exists a set $K \subset \N $ such that
	\[
	 (0, \overline s) \setminus \mathcal E= \bigcup_{i \in K} (a_i, b_i),
	\]
and $K= I \cup J$, where the sets  $I $ and $J$ are such that   $\Psi_\lambda'>0$   in $(a_i, b_i)$ for $i \in I$, and $\Psi_\lambda'<0$  in $(a_i, b_i)$ for  $i \in J$. 
\\ If  $s \in (a_i, b_i)$ with $i \in I$,  then inequality  \eqref{M'} is a straightforward consequence of definition \eqref{def-M}. 
\\ If  $ s \in (a_i, b_i)$ with $i \in J$, then coupling \eqref{def-M} with \eqref{def-N1} tells us that
$$ 
\Phi_\lambda(s)= \frac{A^{-1}(\Psi_\lambda(s))}{\Psi_\lambda(s)} T_\lambda(s),$$ whence inequality \eqref{M'} follows, thanks to equation \eqref{T'} and property \eqref{mono}.
\\ Suppose now that $s \in \mathcal E$. Differentiating the right-hand side of equation \eqref{def-M}  yields
    \[
    \Phi_\lambda'(s)= A^{-1}(\Psi_\lambda(s)),
    \]
thus establishing inequality \eqref{M'} also in this case.
\\ Finally,   assertion (iii) is a consequence of the following chain, which relies upon  Fubini's theorem and a change of variables:
    \begin{align}  \label{apr20}
      \int_0 \frac{d\tau}{(\Phi_\lambda^{-1}(\tau))^{1/\alpha'}}& = \int_0 \l(\alpha '\int_{\Phi_\lambda^{-1}(\tau)}^\infty s^{-\frac 1{\alpha'}- 1} \;ds \r) d\tau= \int_0 s^{-\frac 1{\alpha'}- 1} \l(\int_0^{\Phi_\lambda(s)} d\tau \r) ds  \\ \nonumber &=  \int_0 s^{-\frac 1{\alpha'}- 1}  \Phi_\lambda(s)\, ds= \int_0 s^{-\frac 1{\alpha'}} A^{-1} (\Psi_\lambda(s)) \; ds.
      \end{align} 
\end{proof}

We are now in a position to prove Theorem \ref{thm:neu}.
\begin{proof}[Proof of Theorem \ref{thm:neu}]  Let us preliminarily observe that, since assumptions \eqref{ipBn}--\eqref{ipDn} are only required for large values of $t$, and assumption \eqref{new M} also only depends on   the functions $A$ and $M$ for large values of their argument, we may assume, without loss of generality, that condition \eqref{convint0alpha} is in force. Also, on replacing $M$, if necessary, by an equivalent Young function, we may assume that the function $M$ fulfills the hypotheses of Lemma \ref{prop:M}.

Let $u$ be a weak solution to problem \eqref{neumann} and assume,  by contradiction, that $\displaystyle \esssup |u|= \infty$. 
Let $t_0$ be the constant appearing in conditions \eqref{ipBn}--\eqref{ipDn}.
Fix any $t >\max\{t_0, {\rm med} (|u|)\}$  and consider  the function 
\begin{equation}\label{phi}
\phi= \sgn(u) (|u|- t)_+ ,
\end{equation}
where the subscript $\lq\lq + "$ stands for  positive part. Standard results on truncations of weakly differentiable functions and our assumption that $u\in V^1K^A(\Omega)$ ensure that  $\phi \in V^1 K^A(\Om)$ as well. Moreover, 
$\nabla \phi = \chi_{\{|u|>t\}}\nabla u$  a.e. in $\Omega$.
The use of this test function \eqref{phi}  in the weak formulation \eqref{neu-weak-sol}  of the Neumann problem  \eqref{neumann} yields
	\begin{equation}
	\label{dir-1}
	\int_{\Om_t} \mathcal A(x, u, \nabla u) \cdot \nabla u \; dx= \int_{\Om_t} \mathcal B(x, u, \nabla u) \sgn(u) (|u|-t) \;dx,
	\end{equation}
where we have set
\begin{equation}\label{omegat}
\Om_t = \{x\in \Omega: |u(x)|>t\}  \quad \text{for $t>0$.}
\end{equation}
Let us also define  the distribution function $\mu : [0, \infty) \to [0, \infty)$ of $u$ as 
\begin{equation}\label{mu}
\mu (t) = |\Om_t| \quad \text{for $t \geq 0$.}
\end{equation}
From inequalities \eqref{hpA} and \eqref{ipBn} one can deduce that
	\begin{equation}
	\label{dir-1.1}
	\intot \mathcal A(x, u, \nabla u) \cdot \nabla u \, dx  \ge \intot A(|\nabla u|)- B(|u|) \, dx   \ge \intot A(|\nabla u|) \,dx- \intot A_\alpha (\sigma|u|) \, dx.
	\end{equation}
On the other hand, owing to inequality \eqref{hpB} we have that
	\begin{align}
	\label{dir-2}
	\int_{\Om_t} \mathcal B(x, u, \nabla u) \sgn(u) (|u|- t) \; dx& \le \int_{\Om_t} f(x) (|u|- t) \; dx+ \int_{\Om_t} C(|u|) (|u|- t) \; dx \\
	& \qquad+ \int_{\Om_t} D(|u|) E(|\nabla u|) (|u|- t) \; dx. \nonumber
	\end{align}
We now   estimate the integrals on the 
right-hand side of \eqref{dir-2}.  \\ Let $\eps \in (0,1)$ to be chosen later, and let $\kappa_6$ be the constant appearing in inequality \eqref{intol}. An application of the 
 Hardy-Littlewood inequality \eqref{HL} and of inequality \eqref{youngcianchi}  yields
	\begin{align}
	& \int_{\Om_t} f(x)(|u|-t) \; dx \label{dir-2.1}  \le \int_0^{\mu(t)} f^*(r) (|u|- t)_+^*(r) \; dr \\ \nonumber 
	& \le \kappa_1\eps \int_0^{ \mu(t)} \widehat A_\alpha\l(\kappa_6 r^{-\frac 1\alpha}  (|u|- t)_+^*(r) \r) dr+ \kappa_1\eps \int_0^{\infty} \widetilde A\l(\frac{1}{\kappa_6\eps} r^{-\frac1{\alpha'}} \int_0^r f^*(\rho) \chi_{[0, \mu(t)]} (\rho) \;d\rho \r) dr, \nonumber
	\end{align}
where $\kappa_1$ denotes the constant from inequality \eqref{youngcianchi}.
Owing to inequality \eqref{intol} applied to the function $(|u|- t)_+$ we have that
	\begin{align}
	\label{dir-2.1.1}
	\int_0^{ \mu(t)} \widehat A_\alpha\l(\kappa_6 r^{- 1/\alpha}  (|u|- t)_+^*(r)\r) dr \le   \intot A(|\nabla u|) \,dx.
	\end{align}
Notice that here we have made use of the fact that ${\rm med}((|u|-t)_+)=0$, since we are assuming that $t> {\rm med}(|u|)$.
On the other hand, 
	\begin{align}
	\int_0^{\infty}& \widetilde A\l(\frac{1}{\kappa_6\eps} r^{- 1/{\alpha'}} \int_0^r f^*(\rho) \chi_{[0, \mu(t)]} (\rho) \,d\rho \r) dr 	\label{dir-2.2}
\\
	& = \int_0^{\mu(t)} \widetilde A\l(\frac{1}{\kappa_6\eps}r^{- 1/{\alpha'}} \int_0^r f^*(\rho)\, d\rho\r) dr+ \int_{\mu(t)}^{\infty}\widetilde A \l(\frac{1}{\kappa_6\eps}r^{- 1/{\alpha'}}\int_0^{\mu(t)} f^*(\rho) \,d\rho \r) dr. \nonumber
	\end{align}
By the definition of the weak Orlicz norm,
	\begin{align}
	\int_0^{\mu(t)} \widetilde A \l(\frac{1}{\kappa_6\eps}r^{-1/{\alpha'}} \int_0^r f^*(\rho) \,d\rho \r) dr&
	=\int_0^{\mu(t)} \widetilde A \l(\frac{1}{\kappa_6\eps}r^{ 1/\alpha} f^{**}(r) \r) dr 	\label{dir-2.2.1}\\ 
	& \le \int_0^{\mu(t)} \widetilde A \l(\frac{\|f\|_{L^{M,\infty}(\Om)}}{\kappa_6\eps}   r^{ 1/\alpha} M^{-1} \l({1}/{r}\r) \r) dr, \nonumber
	\end{align}
and
	\begin{align}
	\int_{\mu(t)}^{\infty} \widetilde A \l(\frac{1}{\kappa_6\eps}r^{-1/{\alpha'}} \int_0^{\mu(t)} f^*(\rho) \,d\rho \r) dr &
	= \int_{\mu(t)}^{\infty} \widetilde A \l(\frac{1}{\kappa_6\eps}r^{-1/{\alpha'}} \mu(t) f^{**}(\mu(t))  \r) dr  	\label{dir-2.2.2} \\
	& \le \int_{\mu(t)}^{\infty} \widetilde A \l(\frac{\|f\|_{L^{M,\infty}(\Om)}}{\kappa_6\eps} r^{-1/{\alpha'}} \mu(t)  M^{-1} \l({1}/{\mu(t)}\r) \r) dr. \nonumber
	\end{align}
Combining inequalities \eqref{dir-2.1}--\eqref{dir-2.2.2} yields
	\begin{align}
	\label{dir-2.4}
	 \int_{\Om_t} f(x)(|u|-t) \; dx
	& \le \eps \kappa_1 \int_{\Om_t} A(|\nabla u|) \;dx+ \kappa_1 \int_0^{\mu(t)} \widetilde A \l(\frac{\|f\|_{L^{M,\infty}(\Om)}}{\kappa_6\eps} r^{ 1/\alpha}   M^{-1} \l({1}/{r} \r) \r) dr \\
	& \quad + \kappa_1  \int_{\mu(t)}^{\infty} \widetilde A \l(\frac{\|f\|_{L^{M,\infty}(\Om)}}{\kappa_6\eps} r^{-1/{\alpha'}} \mu(t)  M^{-1} \l({1}/{\mu(t)}\r) \r) dr. \nonumber
	\end{align}
Next, assumption \eqref{ipCn} implies that
	\begin{equation}
	\label{2.5}
	\begin{split}
	& \int_{\Om_t} C(|u|)(|u|- t) \; dx 
	 \le \intot \frac{A_\alpha(\sigma|u|)}{|u|}(|u|- t) \; dx\leq  \intot A_\alpha(\sigma|u|) \; dx.
	\end{split}
	\end{equation}
The last integral on the right-hand side of \eqref{dir-2} can be estimated as follows: 
	\begin{align}
	\label{dir-2.6}
	& \int_{\Om_t} D(|u|)E(|\nabla u|) (|u|- t) \; dx \\
	& \le \int_{\Om_t} E(|\nabla u|) \frac{1}{k|u|} \l(\l(A \circ E^{-1} \r)^{\sim}\r)^{-1} \circ A_\alpha(\sigma |u|)  (|u|- t) \; dx \nonumber \\
	& \le \int_{\Om_t} \frac{E(|\nabla u|)}{k}   \l(\l(A \circ E^{-1} \r)^{\sim}\r)^{-1}(A_\alpha(\sigma |u|))\;dx \nonumber \\
	& \le \int_{\Om_t} (A \circ E^{-1}) \l(\frac{E(|\nabla u|)}{k} \r)+ (A \circ E^{-1})^{\sim} \l((A \circ E^{-1})^{\sim} \r)^{-1} \circ A_\alpha(\sigma |u|) \; dx \nonumber \\
	&\le \int_{\Om_t} \frac{1}{k} A\l(|\nabla u| \r)+  A_\alpha(\sigma|u|) \; dx, \nonumber 
	\end{align}
where the first inequality holds by assumption \eqref{ipDn}, the third one by Young's inequality, and the last one by inequality \eqref{Ak} applied with $A$ replaced by the Young function $A\circ E^{-1}$.
\\ Equations \eqref{dir-2} and \eqref{dir-2.4}--\eqref{dir-2.6} imply that
\begin{align}
	\label{dir-2.7new}
    & \int_{\Om_t} \mathcal B(x, u, \nabla u) \sgn(u) (|u|- t) \; dx  \\
    & \le \Big(\eps \kappa_1 + \frac 1k\Big) \int_{\Om_t} A(|\nabla u|) \;dx+ \kappa_1 \int_0^{\mu(t)} \widetilde A \l(\frac{\|f\|_{L^{M,\infty}(\Om)}}{\kappa_6\eps} r^{ 1/\alpha}   M^{-1} \l({1}/{r} \r) \r) dr \nonumber \\
	& \quad+ \kappa_1  \int_{\mu(t)}^{\infty} \widetilde A \l(\frac{\|f\|_{L^{M,\infty}(\Om)}}{\kappa_6\eps} r^{-1/{\alpha'}} \mu(t)  M^{-1} \l({1}/{\mu(t)}\r) \r) dr + 2 \intot A_\alpha(\sigma|u|) \; dx. \nonumber
	\end{align}
From equations \eqref{dir-1}, \eqref{dir-1.1} and \eqref{dir-2.7new} we deduce that
	\begin{align}
	\label{dir-2.7}
 \Big(1-\eps \kappa_1&-\frac{1}{k}\Big)\intot A(|\nabla u|) \,dx \\
	& \le   \kappa_1\int_0^{\mu(t)} \widetilde A \l(\frac{\|f\|_{L^{M,\infty}(\Om)}}{\kappa_6\eps} r^{ 1/\alpha}  M^{-1} \l({1}/{r} \r) \r) dr \nonumber \\
& \quad+ \kappa_1\int_{\mu(t)}^{\infty} \widetilde A \l(\frac{\|f\|_{L^{M,\infty}(\Om)}}{\kappa_6\eps} r^{-1/{\alpha'}} \mu(t)  M^{-1} \l({1}/{\mu(t)}\r) \r) dr+ 3 \int_{\Om_t} A_\alpha (\sigma|u|) \;dx. \nonumber
	\end{align}
Set $\widetilde k=\frac{k}{k-k\eps \kappa_1-1}$, and choose  $\eps$  so small that  $k-k\eps \kappa_1-1>0$, whence $\widetilde k>1$.  Inequality \eqref{dir-2.7} entails that
\begin{align}
	\label{dir-3}
	 \intot A(|\nabla u|) \,dx &
	\le  
	c \int_0^{\mu(t)} \widetilde A \l(\frac{\|f\|_{L^{M,\infty}(\Om)}}{\kappa_6\eps} r^{ 1/\alpha}  M^{-1} \l({1}/{r} \r) \r) dr \\
	& \qquad +
	c \int_{\mu(t)}^{\infty} \widetilde A \l(\frac{\|f\|_{L^{M,\infty}(\Om)}}{\kappa_6\eps}r^{-1/{\alpha'}} \mu(t)  M^{-1} \l({1}/{\mu(t)}\r) \r) dr
	 + \int_{\Om_t} A_\alpha (3\widetilde k \sigma|u|)\;dx \nonumber
	\end{align}
for some constant $c$.
Our next task is to 
 estimate the last integral on the right-hand side of \eqref{dir-3}.  
%
 %
Inequality \eqref{eq:poincarealpha}, applied to the function $(|u|-t)_+$, yields
  \begin{equation}\label{june33}
    \intot  A_\alpha\l(\frac{\kappa_5(|u|-t)}{(\intot A(|\nabla u|)\;dy)^{1/\alpha}}\r)\;dx\leq \intot A(|\nabla u|) \;dx.
\end{equation}
Here, we have again made use of the fact that 
 ${\rm med}((|u|-t)_+)=0$. Since 
$\lim_{t \to \infty} \intot A(|\nabla u|)\;dy =0$, there exists $t_1>\max\{t_0,   {\rm med}(u)\}$ such that 
$$6\widetilde k \sigma \leq   \kappa_5 \bigg(\intot A(|\nabla u|)\;dy\bigg)^{-1/\alpha} \quad \text{for $t \geq t_1$.}$$
Hence, owing to inequality \eqref{june33},
    \begin{align}
    \label{intAn}
 \intot A_\alpha(3\widetilde k\sigma|u|) \, dx &\le \frac{1}{2} \intot  A_\alpha\bigl(6\widetilde k \sigma (|u|-t)\bigr) \, dx+ \frac{1}{2} \intot A_\alpha(6\widetilde k \sigma t) \, dx \\
 &\le \frac{1}{2} \intot A(|\nabla u|) \;dx+\frac{1}{2}A_\alpha(6\widetilde k \sigma t)\mu(t) \quad \text{for $t \geq t_1$.} \nonumber
\end{align} 
Coupling inequality \eqref{dir-3}  with \eqref{intAn} results in
	\begin{align}
	\label{dir-4}
	\intot A(|\nabla u|) \,dx 
	& \le c \Biggl\{ \mu(t) A_\alpha (ct)+ \int_0^{\mu(t)} \widetilde A\l(\frac{\|f\|_{L^{M,\infty}(\Om)}}{\kappa_6\eps} r^{ 1/\alpha} M^{-1}\l({1}/{r}\r)  \r) dr \\
	& \qquad \qquad \qquad+ \int_{\mu(t)}^{\infty} \widetilde A \l(\frac{\|f\|_{L^{M,\infty}(\Om)}}{\kappa_6\eps}r^{- 1/{\alpha'}} \mu(t) M^{-1}\l({1}/{\mu(t)}\r) \r) dr \Biggr\} \quad \text{for $t \geq t_1$,} \nonumber
	\end{align}
for some constant $c$.
By Jensen's inequality, the coarea formula, and the relative isoperimetric inequality \eqref{isop}
	\begin{align}
	\label{dir-4.2}
	\frac{1}{\mu(t)} \intot A(|\nabla u|) \,dx & \ge A \l(\frac{1}{\mu(t)} \intot |\nabla u| \,dx\r) = A \l(\frac{1}{\mu(t)} \int_t^{\infty} \mathcal H^{n-1}(\partial ^M\{|u|> \tau\}\cap \Omega) \, d\tau \r) \\
	& \ge A \l(\frac{\kappa_4}{\mu(t)} \int_t^{\infty} \mu(\tau)^{ 1/{\alpha'}} \,d\tau \r) \quad \quad \text{for  $t \geq t_1$.} \nonumber
	\end{align}
Therefore, on defining the function $\Psi: [0,\infty) \to [0, \infty)$ as the function $\Psi_\lambda$ given by \eqref{N},   with 
\begin{equation}\label{lambda}
\lambda = \frac{ \|f\|_{L^{M, \infty}(\Om)}}{\kappa_6\eps},
\end{equation}
 and exploiting  inequality \eqref{dir-4.2}, we deduce from \eqref{dir-4} that
	\begin{equation}
	\label{dir-4.1}
	\int_t^{\infty} \mu(\tau)^{ 1/{\alpha'}} \;d\tau \le  \frac{\mu(t)}{\kappa_4} A^{-1}\big( c \l( A_\alpha(ct)+ \Psi(\mu(t)) \r)\big) \quad \text{for $t \geq t_1$.}
	\end{equation}
Now set
	\begin{equation*}
	\begin{split}
	U = \l\{t \ge t_1 : \, \Psi(\mu(t)) \ge A_\alpha(ct)\r\}. \\
	\end{split}
	\end{equation*}
We claim that
	\begin{equation}
	\label{dir-claim1}
	|U|< \infty.
	\end{equation}
To verify this claim, let us define the function $\eta: (0, \infty) \to [0, \infty)$ as 
	\begin{equation}
	\label{y}
	\eta(t)= \int_t^{\infty} \mu(\tau)^{ 1/{\alpha'}} \;d\tau \quad \text{for $t>0$.}
	\end{equation}
This function is locally absolutely  continuous in $(0, \infty)$. Moreover,   $\lim_{t \to \infty} \eta(t)= 0$ and  $\eta'(t)= -\mu(t)^{1/\alpha'}$ for a.e. $t>0$. 
Thereby,
equation \eqref{dir-4.1} implies that
	\begin{equation}
	\label{dir-4.4}
	\eta(t) \le  \frac{(-\eta'(t))^{\alpha'}}{\kappa_4}  A^{-1} \l(c \big(A_\alpha(c t)+ \Psi((-\eta'(t))^{\alpha'})\big) \r)  \quad \text{for a.e. $t>0$.}
	\end{equation}
By inequalities \eqref{dir-4.4} and   \eqref{Ak}, there exists a constant $c$ such that
    \begin{align}
    \label{dis-dir-M}
    \eta(t)  
 \le c (-\eta'(t))^{\alpha'} A^{-1}  \l(\Psi((-\eta'(t))^{\alpha'})\r) \quad \text{for $t\in U$.}
	\end{align}
Observe that, by choosing $\lambda$ as in \eqref{lambda}
in the definition of the function $\Phi_\lambda$ in \eqref{def-M} and setting, for simplicity, $\Phi=\Phi_\lambda$,  inequality \eqref{dis-dir-M} reads
\begin{equation}\label{june35}
 \eta(t) \le c \Phi\big((-\eta'(t))^{\alpha'}\big)  \quad \text{for $t\in U$.}
\end{equation}
By Lemma \ref{prop:M},   $\Phi$ is either  identically equal to $0$, or    strictly increasing in some interval of the form
$(0, \delta)$, with  $\delta>0$.  In the former case,
our assumption that $\esssup |u|=\infty$ immediately leads to a contradiction, inasmuch as the right-hand side of inequality \eqref{june35} vanishes if $t$ is sufficiently large, whereas its left-hand side is strictly positive for every $t>0$. 
\\ We may thus focus on the case when $\Phi$ is strictly increasing in $(0, \delta)$. From inequality \eqref{june35} one infers that  there exists $t_2\geq t_1$ such that
	\[
	1 \le -\frac{\eta'(t)}{\Phi^{-1}\l(\frac{\eta(t)}{c} \r)^{ 1/{\alpha'}}} \quad \text{if $t\in U\cap  (t_2, \infty)$.}
	\]
Thereby,
    \begin{align}
	    \label{meas-U}
	 |U| 
	&\leq (t_2- t_1) + \int_{U\cap  (t_2, \infty)} dt \le (t_2- t_1) + \int_{t_2}^\infty \frac{-\eta'(t)}{\Phi^{-1}\l(\frac{\eta(t)}{c} \r)^{{ 1/{\alpha'}}}}\;dt \\
	& \leq (t_2- t_1)  + c \int_0 \frac{dr}{\Phi^{-1}(r)^{ 1/{\alpha'}}}< \infty, \nonumber
	    \end{align}
where  the last inequality holds thanks to  Lemma \ref{prop:M}. Hence, claim \eqref{dir-claim1} follows.
\\
Next, set $W= [t_1, \infty) \setminus U$. Thanks to property \eqref{dir-claim1}, we have that $|W|= \infty$. Inequalities  \eqref{dir-4.1} and \eqref{Ak} ensure that 
	\begin{equation}
	\label{dir-4.4.1}
	\int_t^{\infty} \mu(\tau)^{ 1/{\alpha'}} d\tau \le c \mu(t) A^{-1} \l(A_\alpha( ct)\r) \quad \text{for $t \in W$,}
	\end{equation}
for some constant $c$, 
whence
	\begin{equation}
	\label{dir-4.5}
	\frac{1}{\displaystyle A^{-1}(A_\alpha(ct))^{ 1/{\alpha'}}} \le \frac{(c\mu(t))^{ 1/{\alpha'}}}{ \l(\int_t^{\infty} \mu(\tau)^{ 1/{\alpha'}} d\tau \r)^{ 1/{\alpha'}}} \quad \text{for $t\in W$.}
	\end{equation}
An integration of this  inequality over the set $W$ yields
	\begin{equation}
	\label{dir-4.6}
	\int_W \frac{dt}{\displaystyle A^{-1}(A_\alpha(ct))^{ 1/{\alpha'}}}  \le \int_W \frac{(c\mu(t))^{ 1/{\alpha'}}}{ \l(\int_t^{\infty} \mu(\tau)^{1/{\alpha'}} d\tau \r)^{1/{\alpha'}}} \, dt \le \frac {c}\alpha  \l(\int_{t_1}^{\infty} \mu(\tau)^{ 1/{\alpha'}} d\tau\r)^{ 1/\alpha}< \infty,
	\end{equation}
where the last inequality holds by  \eqref{dir-4.2}.
\\ Define the function $L: [t_1, \infty) \to [0, \infty)$ as
\begin{equation*}
	L(t)= \frac{1}{\displaystyle A^{-1}(A_\alpha(ct))^{ 1/{\alpha'}}} \quad \text{for $t\geq t_1$.}
	\end{equation*}
The function $L$ is decreasing, and hence bounded. Therefore,  combining equations \eqref{dir-claim1} and 
\eqref{dir-4.6} tells us that
	\begin{equation*}
	\label{dir-claim2}
	\int_{t_1}^{\infty} L(t)\, dt< \infty.
	\end{equation*}
Define the  set $I_s= [s, \infty) \cap U$   for  $s \in [t_1, \infty)$.  
%
Since $L$ is decreasing,
 	\[
	\int_{I_s} L(t) \dd t\leq 
 L(s) |I_s|.
	\]
On the other hand, owing to the reverse Hardy-Littlewood inequality 	\eqref{HLrev}, for every $l>s$ one has that
	\begin{align*}
	\int_{[s, \infty) \cap W} L(t) \dd t \geq&	\int_{s}^\infty L(t) \chi_{[s, l) \cap W}(t)\dd t \geq\int_{s}^l L(t) (\chi_{[s, l) \cap W})_*(t)\dd t \\ \nonumber & =\int_{s+ |[s,l)\cap U|}^l L(t) \dd t \geq  \int_{s+ |I_s|}^{l+|I_s|-  |[s,l)\cap U|} L(t) \dd t.
%
%
%
 	\end{align*}
Hence, if $l \geq 2|I_s|$, 
%
%
	\begin{align*}
	  \int_{[s, \infty) \cap W} L(t) \dd t  \geq  \int_{s+ |I_s|}^{s+ 2|I_s|} L(t) \dd t \ge L(s+ 2|I_s|) |I_s|.
	\end{align*}
Since $\lim_{s \to \infty} |I_s|= 0$, given $\bar k>1$, there exists   $s_1>t_1$ such that $2|I_s| \le (\bar k-1)s $ if $s \ge s_1$. Therefore, $s+ 2|I_s| \le \bar k s$, whence $L(s+ 2|I_s|) \ge L(\bar k s)$ if $s \ge s_1$. It follows that
	\begin{equation}
	\label{dir-4.9}
	\int_{[s, \infty) \cap W} L(t) \dd t \ge L(\bar k s) |I_s| \ge \int_{[\bar k s, \infty) \cap U} L(t) \dd t= \int_{I_{\bar k s}} L(t) \dd t \quad \text{for $s \ge s_1$.}
	\end{equation}
Combining inequalities \eqref{dir-4.5} and \eqref{dir-4.9} yields
	\begin{align}
	\label{dir-4.10}
\int_{\bar k s}^{\infty} \frac{dt }{\displaystyle A^{-1}(A_\alpha(ct))^{ 1/{\alpha'}}}  &=
\int_{\bar k s}^{\infty} L(t) \,dt = \int_{[\bar k s, \infty) \cap W} L(t) \,dt+ \int_{I_{\bar k s}} L(t) \,dt 
\\
& \le \int_{[\bar k s, \infty) \cap W} L(t) \dd t+ \int_{[s, \infty) \cap W} L(t) \,dt \nonumber \\
	& \le 2 \int_{[s, \infty) \cap W} L(t) \,dt   \le 2 \int_{[s, \infty) \cap W} \frac{(c\mu(t))^{ 1/{\alpha'}}}{ \l(\int_s ^{\infty} \mu(\tau)^{ 1/{\alpha'}} d\tau \r)^{ 1/{\alpha'}}} \,dt \nonumber \quad   \text{for $s\geq s_1$.}
	\end{align}
A change of variables in the integral on the leftmost side of equation \eqref{dir-4.10} enables one to deduce  that
	$$
	\int_t^{\infty} \frac{d\tau}{A^{-1}(A_\alpha(c\tau))^{ 1/{\alpha'}}}  \le c \int_{[t, \infty) \cap W} \frac{\mu(\tau)^{ 1/{\alpha'}}}{ \l(\int_t^{\infty} \mu(r)^{1/{\alpha'}} dr \r)^{1/{\alpha'}}} \,d\tau \le c \l(\int_t^\infty \mu(\tau)^ {1/\alpha'}d\tau \r)^{ 1/\alpha} \quad  \text{for $t\geq s_1$,}
	$$
for some constant $c$,
whence
    \begin{equation}
        \label{concl-1}
        \l(\int_t^{\infty} \frac{d\tau}{A^{-1}(A_\alpha(c \tau))^{1/{\alpha'}}}\r)^{\alpha} \le c \int_t^\infty \mu(\tau)^{ 1/{\alpha'}} \; d\tau \quad \text{for $t\geq s_1$.}
    \end{equation}
If
    \begin{equation*}
    \int ^{\infty} \frac{d\tau}{A^{-1}(A_\alpha(c\tau))^{1/{\alpha'}}}= \infty,
    \end{equation*}
then inequality  \eqref{concl-1} yields a contradiction. Conversely, suppose that
    \begin{equation}
        \label{concl-2}
    \int ^\infty  \frac{d\tau}{(A^{-1}(A_\alpha(c\tau)))^{1/{\alpha'}}}< \infty.
    \end{equation}
Thanks to inequality \eqref{intAn},  
for every $\lambda>0$ the function $ \omega_\lambda : [0, \infty) \to [0, \infty)$, given by
    $$
    \omega_\lambda(t)= \intot A_\alpha(\lambda |u|) \; dx \quad \text{for $t>0$,}
    $$
is actually finite-valued, and  
    \begin{equation}
        \label{concl-3}
        \lim_{t \to \infty} \omega_\lambda(t)= 0.
    \end{equation}
Since $\omega_\lambda(t) \ge A_\alpha(\lambda t) \mu(t)$  for  $t> 0$, one has that
    \begin{equation}
        \label{concl-4}
      \int_t^\infty \mu(\tau)^{1/{\alpha'}} \; d\tau \leq   \int_t^\infty \frac{\omega_\lambda(\tau)^{ 1/{\alpha'}}}{A_\alpha(\lambda \tau)^{ 1/{\alpha'}}} \; d\tau \quad \text{for $t>0$,}
    \end{equation}
for every $\lambda>0$.  Notice that, since the function $A^{-1}$ is concave and increasing, equations  \eqref{concl-2}  and \eqref{concl-3}
entail that the   integral on the right-hand side of \eqref{concl-4} converges. 
Equation \eqref{concl-3} implies, via De L'Hopital's rule, that
    \begin{equation}
        \label{concl-5}
        \lim_{t \to \infty} \int_t^\infty \frac{\omega_\lambda(\tau)^{\alpha}}{A_\alpha(\lambda \tau)^\alpha} \; d\tau \l(\int_t^\infty \frac{d\tau}{A_\alpha(\lambda \tau)^\alpha}  \r)^{-1}= 0
    \end{equation}
for every $\lambda>0$. Combining equations \eqref{concl-1}, \eqref{concl-4} and \eqref{concl-5}  tells us that
\begin{equation*}
        \lim_{t \to \infty}   \l(\int_t^{\infty} \frac{d\tau}{A^{-1}(A_\alpha(c\tau))^{1/{\alpha'}}}\r)^{\alpha} \l(\int_t^\infty \frac{d\tau}{A_\alpha(\lambda \tau)^\alpha}  \r)^{-1}= 0.
\end{equation*}
The fact that this limit holds, whatever the constant $c$ is, 
for sufficiently large  $\lambda$ depending on $c$, contradicts the inequality
\begin{equation*}
 \int_t^\infty \frac{d\tau}{A_\alpha(\lambda \tau)^\alpha} \leq c'  \l(\int_t^{\infty} \frac{d\tau}{A^{-1}(A_\alpha(c\tau))^{1/{\alpha'}}}\r)^{\alpha} \quad \text{for $t>0$,}
\end{equation*}
which holds for every $\alpha>1$ and $\lambda >2$, for a suitable constant $c'=c'(\alpha, \lambda)$, as shown in  \cite[Lemma 2]{Cianchi-boundedness}.
\\ This contradiction ensures that $u \in L^\infty(\Om)$.
\end{proof}

\medskip

\begin{proof}[Proof of Theorem \ref{thm:dir}, sketched.]
By the reasons explained at the beginning of the proof of Theorem \ref{thm:neu},  we may assume, without loss of generality, that the function $A$ satisfies condition \eqref{convint0alpha} with $\alpha =n$ and that the function $M$ satisfies the hypotheses of Lemma \ref{prop:M}, also with $\alpha =n$.
\\
Let $u$ be a weak solution to the Dirichlet problem \eqref{dirichlet} and assume by contradiction that $\displaystyle \esssup | u|= \infty$. Let $t_0>0$ be the constant  appearing in equations \eqref{ipB}--\eqref{ipD}. Fix    $t> \max\{t_0, \esssup |u_0|\}$. The fact that, in particular, $t>  \esssup |u_0|$ ensures that the function \eqref{phi} belongs to  $V_0^1 K^A(\Om)$. It can therefore be used as a test function in the weak formulation  \eqref{dir-weak-sol} of problem  \eqref{dirichlet} to deduce that
    \begin{align}
    \label{eq-dir-1}
    \intot \mathcal{A}(x, u, \nabla u) \cdot \nabla u \; dx= \intot \mathcal{B}(x, u, \nabla u) \sgn(u) (|u|- t) \; dx,
    \end{align}
where the set $\Om_t$ is defined as in \eqref{omegat}. Assumptions \eqref{hpA} and \eqref{ipB} imply that
    \begin{align}
        \label{eq-dir-2}
        \intot \mathcal{A}(x, u, \nabla u) \cdot \nabla u \; dx \ge \intot A(|\nabla u|) \; dx- \intot A_n(\sigma|u|) \; dx,
    \end{align}
whereas assumption \eqref{hpB} entails that
    	\begin{align}
	\label{eq-dir-3}
	\int_{\Om_t} \mathcal B(x, u, \nabla u) \sgn(u) (|u|- t) \; dx& \le \int_{\Om_t} f(x) (|u|- t) \; dx+ \int_{\Om_t} C(|u|) (|u|- t) \; dx \\
	& \qquad+ \int_{\Om_t} D(|u|) E(|\nabla u|) (|u|- t) \; dx. \nonumber
	\end{align}
The right-hand side of inequality \eqref{eq-dir-3} can be estimated along the same lines as the estimates established for the right-hand side of inequality \eqref{dir-2} in the proof of Theorem \ref{thm:neu}, save that Sobolev type inequalities for functions vanishing on the boundary  $\partial \Omega$ have to be exploited, instead of their counterparts for functions with vanishing median.
Specifically, from  inequality \eqref{youngcianchi} with $\alpha= n$ and inequality \eqref{intol0} we deduce that
	\begin{align}
\label{eq-dir-4}
	 \int_{\Om_t} f(x)(|u|-t) \; dx
	& \le \eps \kappa_1 \int_{\Om_t} A(|\nabla u|) \;dx+ \kappa_1 \int_0^{\mu(t)} \widetilde A \l(\frac{\|f\|_{L^{M,\infty}(\Om)}}{\kappa_3\eps} r^{ 1/n}   M^{-1} \l({1}/{r} \r) \r) dr \\
	& \quad + \kappa_1  \int_{\mu(t)}^{\infty} \widetilde A \l(\frac{\|f\|_{L^{M,\infty}(\Om)}}{\kappa_3\eps} r^{-1/{n'}} \mu(t)  M^{-1} \l({1}/{\mu(t)}\r) \r) dr. \nonumber
	\end{align}
Moreover, assumption \eqref{ipC}  implies that
    \begin{align}
        \label{eq-dir-5}
        \int_{\Om_t} C(|u|)(|u|- t) \; dx 
	 \leq  \intot A_n(\sigma|u|) \; dx,
    \end{align}
and, by assumption \eqref{ipD}, Young's inequality and property \eqref{Ak},
    \begin{align}
        \label{eq-dir-6}
        	 \int_{\Om_t} D(|u|)E(|\nabla u|) (|u|- t) \; dx \le \int_{\Om_t} \frac{1}{k} A\l(|\nabla u| \r)+  A_n(\sigma|u|) \; dx.
    \end{align}
Combining equations  \eqref{eq-dir-1}--\eqref{eq-dir-6} and arguing as in the proof of inequality \eqref{dir-3} enable one to infer that 
    \begin{align}
        \label{eq-dir-7}
         \intot A(|\nabla u|) \,dx &
	\le  c \int_0^{\mu(t)} \widetilde A \l(\frac{\|f\|_{L^{M,\infty}(\Om)}}{\kappa_3\eps} r^{1/ n}  M^{-1} \l({1}/{r} \r) \r) dr \\
	& \qquad +c \int_{\mu(t)}^{\infty} \widetilde A \l(\frac{\|f\|_{L^{M,\infty}(\Om)}}{\kappa_3\eps}r^{-1/{n'}} \mu(t)  M^{-1} \l({1}/{\mu(t)}\r) \r) dr
	 + \int_{\Om_t} A_n (3 \widetilde{k} \sigma|u|)\;dx, \nonumber
    \end{align}
for some constants $c>0$ and 
$\widetilde k >1$. 
\\ The last integral on the right-hand side of inequality  \eqref{eq-dir-7} can be bounded   as in \eqref{dir-3}--\eqref{intAn}, via inequality \eqref{eq:poincare} in the place of \eqref{eq:poincarealpha}. Eventually, we obtain that
    \begin{align}
       \label{eq-dir-8}
        \intot A(|\nabla u|) \,dx
	& \le c \Biggl\{ \mu(t) A_n (ct)+ \int_0^{\mu(t)} \widetilde A\l(\frac{\|f\|_{L^{M,\infty}(\Om)}}{\kappa_3\eps} r^{1/ n} M^{-1}\l({1}/{r}\r)  \r) dr \\
	& \qquad \qquad \qquad+ \int_{\mu(t)}^{\infty} \widetilde A \l(\frac{\|f\|_{L^{M,\infty}(\Om)}}{\kappa_3\eps}r^{-1/{n'}} \mu(t) M^{-1}\l({1}/{\mu(t)}\r) \r) dr \Biggr\} \quad \text{for $t \geq t_1$,} \nonumber
    \end{align}
for suitable constants $c>0$ and   $t_1 >  \max\{t_0, \esssup |u_0|\}$. 
\\ An analogous chain as in \eqref{dir-4.2}, with the relative isoperimetric inequality replaced by the isoperimetric inequality \eqref{isoprn} in $\RN$, yields 
\begin{align}
	\label{june40}
	\frac{1}{\mu(t)} \intot A(|\nabla u|) \,dx  \ge  A \l(\frac{n \omega_n^{\frac 1n}}{\mu(t)} \int_t^{\infty} \mu(\tau)^{ 1/{n'}} \,d\tau \r) \quad \quad \text{for  $t \geq t_1$.} 
	\end{align}
One can now start from equations \eqref{eq-dir-8} and \eqref{june40} instead of 
\eqref{dir-4} and \eqref{dir-4.2} and argue as in the proof of Theorem \ref{thm:neu} to produce a 
 contradiction to the assumption  that  $\displaystyle \esssup | u|= \infty$.
\end{proof}
	
\section{A Sobolev trace inequality in Lorentz $\Lambda$-spaces}\label{traceineq}


	
This section is devoted to a  Sobolev  trace inequality in  Lorentz $\Lambda$-spaces to be employed in the proof of Theorem \ref{thm:rob} on Robin problems. This is the content of Theorem \ref{traceLambda}. We begin with some preliminary material on general rearrangement-invariant spaces and, more specifically, on Lorentz and Marcinkiewicz spaces.

	Let $(\mathcal R, m)$ be a non-atomic, sigma-finite measure space.  A rearrangement-invariant space $X(\mathcal R)$ on $\mathcal R$ is a Banach function space, in the sense of Luxemburg, such that
\begin{equation*} 
\|u\|_{X(\mathcal R)} = \|v\|_{X(\mathcal R)}
\end{equation*}
whenever $u, v \in \M(\mathcal R)$ satisfy the equality $u^*=v^*$. 
We refer to the monograph \cite{BS} for a comprehensive treatment of the theory of rearrangement-invariant spaces. 
\\ The associate space of $X(\mathcal R)$ is denoted by $X'(\mathcal R)$. The latter is a kind of measure theoretic dual of $X(\mathcal R)$, and enters a H\"older type inequality, since  its norm is given by
$$\|u\|_{X'(\mathbb R)}= \sup_{v\in X(\Om)}\frac{\int_{\mathcal R} uv\, dm}{\|v\|_{X(\Om)}}.$$
In particular, ${(X')'(\mathcal R)}= X(\mathcal R)$.
\\ The representation space $\overline X(\mathcal R)$ of $X(\mathcal R)$ is a rearrangement-invariant space on the interval $(0, m(\mathcal R))$, endowed with the Lebesgue measure. It has the property that
\begin{equation*} 
\|u\|_{X(\mathcal R)} = \|u^*\|_{\overline X(0, m(\mathcal R))} 
\end{equation*}
for every function $u \in X(\mathcal R)$.

 Lebesgue, Orlicz, Lorentz and Orlicz-Lorentz spaces are special instances of rearrangement-invariant spaces. The Lorentz and the Marcinkiewicz rearrangement-invariant spaces are built upon  a general  quasiconcave function. 
In the present setting, a function $\varphi \colon [0, \infty) \to [0, \infty)$ is called   quasiconcave if it is increasing, vanishes only at $0$, and the function $ \frac{\varphi(s)}{s}$ is decreasing. Every quasiconcave function is equivalent, up to multiplicative constants, to a concave function \cite[pg. 71]{BS}. Moreover,
\begin{equation}\label{june45}
\min\{1, r\} \varphi (s) \leq \varphi(r \textcolor{black}{s}) \leq \max\{1, r\} \varphi (s)\quad \text{for $r,s\geq 0$.}
\end{equation}
If $\varphi$ is a quasiconcave function, then the function  $\widehat \varphi$, given by
\begin{equation}\label{hatphi}
\widehat \varphi (s) = \begin{cases}\displaystyle \frac s{\varphi (s)} \quad & \text{if $s>0$}
\\ 0  \quad & \text{if $s=0$,} 
\end{cases}
\end{equation}
 is also quasiconcave.
\\
The Lorentz space  $\Lambda_\varphi(\mathcal R)$ associated with a quasiconcave function $\varphi$ is the collection of all   functions $u \in \M(\mathcal R)$  making the norm
%
%
%
%
%
%
%
%
%
%
%
%
%
	\begin{equation*}
	\|u\|_{\Lambda_\varphi(\mathcal R)}= \int_0^{m(\mathcal R)} u^*(s) \,d\varphi (s)
	\end{equation*}
finite. 
\\ The Marcinkiewicz space $M_{\varphi}(\mathcal R)$ associated with $\varphi$ is the collection of those  functions $u \in \mathcal M(\mathcal R)$ such that the norm 
\begin{equation*}
	\|u\|_{M_{\varphi}(\mathcal R)}= \sup_{s\in (0, m(\mathcal R))} \varphi(s) u^{**}(s)
	\end{equation*}
is finite. 
One has that \cite{musil}
   \begin{equation}\label{associate}
    \Lambda'_\varphi(\mathcal R)= M_{\widehat \varphi}(\mathcal R) \quad \text{and} \quad M_{\varphi}'(\mathcal R)= \Lambda_{\widehat\varphi}(\mathcal R).
    \end{equation}
If $\Omega$ is an open set in $\RN$, we define the Sobolev type space built upon $X(\Om)$ as 
$$W^1X (\Omega) = \{u \in X(\Om) :\, \text{$u$ is weakly differentiable and $|\nabla u| \in X (\Omega)$}\}.$$
	
\begin{lemma}
\label{lemma:isopX}
Assume that $\Omega$ is an open set in $\RN$ such that $\Om \in \mathcal J_{1/{\alpha'}}$ for some $\alpha \geq n$.
Let $X(\Omega)$ be a rearrangement-invariant space. 
Then, there exists a positive constant $\kappa_9= \kappa_9(\Om)>0$ such that
\begin{equation}
        \label{grad-X}
      \|u- {\rm med}(u)\|_{X(\Omega)} \le \kappa_9  \|\nabla u\|_{X(\Omega)}
    \end{equation}
 for every function
$u \in W^1 X(\Om)$. 
\end{lemma}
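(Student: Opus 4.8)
The plan is to reduce the estimate \eqref{grad-X} to a one-dimensional Hardy-type inequality on the interval $(0,|\Omega|)$, by first passing to the signed decreasing rearrangement of $v=u-{\rm med}(u)$ and then invoking the Pólya--Szegő-type inequality associated with the relative isoperimetric inequality \eqref{isop} that is available since $\Omega\in\mathcal J_{1/\alpha'}$. First I would set $v=u-{\rm med}(u)$, so that ${\rm med}(v)=0$, which by definition of the median means $|\{v>0\}|\le\frac{|\Omega|}2$ and $|\{v<0\}|\le\frac{|\Omega|}2$; consequently $v^\circ(\frac{|\Omega|}2)=0$, $v^\circ\ge 0$ on $(0,\frac{|\Omega|}2]$ and $v^\circ\le 0$ on $[\frac{|\Omega|}2,|\Omega|)$. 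By the same reasoning used for the (commented-out) Lemma~\ref{lemma:isop}, $v^\circ$ is locally absolutely continuous in $(0,|\Omega|)$ (cf.\ \cite[Lemma 6.6]{CEG}), and the relative isoperimetric inequality yields, via the coarea formula, a bound of the form
\begin{equation*}
\int_0^{|\Omega|} A\bigl(c\,\min\{s,|\Omega|-s\}^{1/\alpha'}\,|(v^\circ)'(s)|\bigr)\,ds\le\int_\Omega A(|\nabla v|)\,dx
\end{equation*}
for every Young function $A$ (this is exactly \cite[Lemma 4.1-(ii)]{CP}); since a rearrangement-invariant norm is governed by its behaviour on Young/indicator test functions, this translates into the pointwise rearrangement inequality that the decreasing rearrangement of $\min\{s,|\Omega|-s\}^{1/\alpha'}|(v^\circ)'(s)|$ is controlled in $\overline X(0,|\Omega|)$ by $|\nabla v|^*$, i.e.\ $\bigl\|\min\{s,|\Omega|-s\}^{1/\alpha'}(v^\circ)'(s)\bigr\|_{\overline X(0,|\Omega|)}\le c\,\|\nabla v\|_{X(\Omega)}$.

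Next I would recover $v^\circ$ itself from its derivative. On $(0,\frac{|\Omega|}2]$ we have $v^\circ(s)=\int_s^{|\Omega|/2}(-(v^\circ)'(r))\,dr$, and on $[\frac{|\Omega|}2,|\Omega|)$ we have $-v^\circ(s)=\int_{|\Omega|/2}^s(-(v^\circ)'(r))\,dr$. Writing $g(r)=\min\{r,|\Omega|-r\}^{1/\alpha'}\,|(v^\circ)'(r)|$, these become weighted Hardy operators applied to $g$: for instance, for $s\le\frac{|\Omega|}2$,
\begin{equation*}
|v^\circ(s)|=\int_s^{|\Omega|/2} g(r)\,\min\{r,|\Omega|-r\}^{-1/\alpha'}\,dr\le\int_s^{|\Omega|/2} g(r)\,r^{-1/\alpha'}\,dr ,
\end{equation*}
since on $(0,\frac{|\Omega|}2)$ one has $\min\{r,|\Omega|-r\}=r$; the weight $r^{-1/\alpha'}$ is integrable near $0$ because $\alpha'>1$. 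An analogous bound holds near $|\Omega|$. Since $v$ and $v^\circ$ are equimeasurable, $\|v\|_{X(\Omega)}=\|(v^\circ)^*\|_{\overline X(0,|\Omega|)}$, and $(v^\circ)^*$ is dominated by the symmetric decreasing rearrangement of the function obtained above; so it remains to show that the map $g\mapsto \int_\cdot^{|\Omega|/2} g(r)r^{-1/\alpha'}\,dr$ (extended symmetrically) is bounded on $\overline X(0,|\Omega|)$. This is a classical boundedness property: on a finite interval, the operator $h(s)\mapsto\int_s^{\ell} h(r)r^{-1/\alpha'}\,dr$ is dominated by a constant times $h^{**}$ up to dilation, hence is bounded on every rearrangement-invariant space (its boundedness reduces, by interpolation/Calderón's theorem, to the two endpoint estimates $L^1\to L^1$ and $L^\infty\to L^\infty$, both of which hold because $\int_0 r^{-1/\alpha'}\,dr<\infty$ and because $\|g r^{-1/\alpha'}\|_{L^1}\le\|g\|_{L^1}\sup r^{-1/\alpha'}$ is false pointwise but true after a dyadic decomposition; more cleanly, Hardy's inequality in the form $\|\int_s^\ell h(r)\,d\nu(r)\|_{\overline X}\le C\|h\|_{\overline X}$ holds whenever the measure $d\nu(r)=r^{-1/\alpha'}\,dr$ is finite on $(0,\ell)$). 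Combining the two estimates gives $\|v\|_{X(\Omega)}\le\kappa_9\|\nabla v\|_{X(\Omega)}=\kappa_9\|\nabla u\|_{X(\Omega)}$, which is \eqref{grad-X}.

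The main obstacle is the rigorous justification of the step in which the relative-isoperimetric/coarea inequality for all Young functions is upgraded to a single rearrangement inequality valid for an arbitrary rearrangement-invariant norm — equivalently, the assertion that the non-increasing rearrangement of $\min\{s,|\Omega|-s\}^{1/\alpha'}|(v^\circ)'(s)|$ is pointwise dominated (after the usual integral/Hardy relaxation) by $|\nabla v|^*$. This is precisely the content of the general Pólya--Szegő principle in the Maz'ya class $\mathcal J_{1/\alpha'}$; I would cite \cite[Lemma 4.1]{CP} (or the corresponding statement in \cite{CPS}) for the pointwise-in-rearrangement version rather than re-deriving it, and then the remaining work is the elementary Hardy-operator boundedness, which is standard for rearrangement-invariant spaces on a finite-measure interval. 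A secondary point requiring a little care is that ${\rm med}(u)$ is only defined up to the usual ambiguity; any choice works, since all that is used is ${\rm med}(v)=0$ in the weak sense $|\{v>0\}|\le\frac{|\Omega|}2\ge|\{v<0\}|$.
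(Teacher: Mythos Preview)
Your proposal is correct and follows essentially the same route as the paper: represent $u^\circ(s)-\mathrm{med}(u)=\int_s^{|\Omega|/2}(-(u^\circ)'(r))\,dr$, invoke \cite[Lemma~4.1-(ii)]{CP} directly for the r.i.-norm bound $\bigl\|\min\{s,|\Omega|-s\}^{1/\alpha'}(u^\circ)'\bigr\|_{\overline X}\le c\|\nabla u\|_X$ (so your ``main obstacle'' is a non-issue), and then show the Hardy-type operator $Th(s)=\int_s^{|\Omega|/2}h(r)\min\{r,|\Omega|-r\}^{-1/\alpha'}\,dr$ is bounded on $\overline X$ via Calder\'on's $L^1$--$L^\infty$ interpolation. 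The only muddled spot is your aside on the $L^1\to L^1$ endpoint: the clean argument is Fubini, giving $\int_0^{|\Omega|/2}|Th(s)|\,ds\le\int_0^{|\Omega|/2}|h(r)|\,r^{1-1/\alpha'}\,dr\le c\|h\|_{L^1}$ (and symmetrically on the other half), with no dyadic decomposition needed.
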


\begin{proof} Let $u$ be as in the statement. Owing to \cite[Lemma 6.6]{CEG}, its signed decreasing rearrangement $u^{\circ}$ is locally absolutely continuous in $(0, |\Omega|)$. Moreover, 
$u^{\circ} \left(\frac{|\Om|}{2}\right)=  {\rm med}(u)$. Hence, 
   \begin{equation*}
        u^{\circ}(s)- {\rm med}(u)=  \int_s^{\frac{|\Om|}{2}} -(u^{\circ})'(r) \,dr \quad \text{for $s \in (0, |\Om|)$.}
    \end{equation*}
Let $T$ be the linear operator defined as
$$T h(s) =   \int_s^{\frac{|\Om|}{2}}h(r) \min\{r, |\Omega|-r\}^{- 1/{\alpha'}}\,dr \quad \text{for $s \in (0, |\Om|)$,}$$
for an integrable function $h:  (0, |\Om|)\to \mathbb R$. One can verify that $T$ is bounded from $L^1(0, |\Om|)$  into $L^1(0, |\Om|)$ and from $L^\infty(0, |\Om|)$  into $L^\infty(0, |\Om|)$, with norms depending only on $|\Om|$ and on $\alpha$. An interpolation theorem by Calder\'on \cite[Chapter 3, Theorem 2.12]{BS}  ensures that it is also bounded from $\overline X(0, |\Om|)$ into $\overline X(0, |\Om|)$, with the same dependence of the norm. Thus, there exists a constant  $c=c(|\Om|, \alpha)$ such that
\begin{equation}\label{apr10}
\|u^\circ- {\rm med}(u)\|_{\overline X(0, |\Om|)} \leq c \| -(u^{\circ})'(r) \min\{r, |\Omega|-r\}^{ 1/{\alpha'}}\|_{\overline X(0, |\Om|)}.
\end{equation}
On the other hand, since $\Omega \in \mathcal J_{1/{\alpha'}}$,  \cite[Lemma 4.1-(ii)]{CP} ensures that
\begin{equation}\label{apr11}
 \| -(u^{\circ})'(r) \min\{r, |\Omega|-r\}^{ 1/{\alpha'}}\|_{\overline X(0, |\Om|)} \leq c\|\nabla u\|_{X(\Om)}
\end{equation}
for some constant  $c=c(\Om)$. Inequality \eqref{grad-X} follows from \eqref{apr10} and \eqref{apr11}, owing to the fact that
$$\| u^\circ- {\rm med}(u)\|_{X(\Om)} =  \|u^\circ- {\rm med}(u)\|_{\overline X(0, |\Om|)}.$$
\end{proof}

Assume that $\Omega$ is an admissible domain in $\RN$, according to the definition given in Section \ref{spaces}, and let  $X(\Omega)$ be a rearrangement invariant space. Let $Y(\partial\Om)$ be the rearrangement-invariant space on $\partial \Om$ endowed with the norm obeying
\begin{align}\label{june46}
  \|u\|_{ Y'(\partial \Om)} =\|u^{**}_{\mathcal H^{n-1}}(\gamma_\Om s^{{1}/{n'}})\|_{\overline X'(0,|\Om|)}
\end{align}
for a  function $u \in \M(\partial \Omega)$, where $\gamma_\Om = \frac{\mathcal H^{n-1}(\partial \Om)}{|\Om|^{1/n'}}$. Here, the notation $u^{**}_{\mathcal H^{n-1}}$ means that  the rearrangement of $u$ is taken with respect to the measure $\mathcal H^{n-1}$ on $\partial \Om$.
Then 
 \cite[eq. (1.8)]{CKP} tells us that
\begin{equation}\label{opttr}
{\rm Tr}: W^1X(\Omega) \to Y(\partial \Om),
\end{equation}
and that $Y(\partial \Om)$ is optimal among all rearrangement-invariant target spaces in this embedding.  
Let us point out that, although the results of \cite{CKP} are stated for Lipschitz domains, they  still hold  in any admissible domain $\Omega$, because the interpolation argument exploited in their proofs still applies.
\\ Since any { admissible domain} belongs to the class $\mathcal J_{1/{n'}}$, 
combining embedding \eqref{opttr} with Lemma \ref{lemma:isopX} entails that there exists a constant $\kappa_{10}$ such that
%
    \begin{align*} 
        \|{\rm Tr} \, u - {\rm med}(u)\|_{Y(\rand)} \le \kappa_{10} \|\nabla u \|_{X(\Om)}
    \end{align*}
for every $u \in W^1X(\Om)$. 

The general embedding \eqref{opttr} can be implemented in the special case when $X$ is a Lorentz space of the type introduced above. 
  Given a non-increasing function  $\zeta : [0, \infty) \to [0, \infty)$, we denote  by
$\vartheta : [0, \infty) \to [0, \infty)$  the function defined as
\begin{equation}
	\label{vartheta}
	\vartheta(r)= r^{{1}/{n}} \int_0^{r^{1/n'}} \zeta(\rho) \;d\rho \quad \text{for $r\geq 0$,}
	\end{equation}
and by $\varpi : [0, \infty) \to [0, \infty)$ the function  defined  as
\begin{equation}
	\label{w}
 \varpi(r)=\int _0^r\zeta (\rho) \;d\rho \quad \text{for $r\geq 0$.}
\end{equation}
The monotonicity   of the function $\zeta$ ensures that both these functions are quasiconcave.

\begin{theorem}\label{traceLambda}
Assume that $\Omega$ is {an admissible domain} in $\RN$. Let  $\zeta : [0, \infty) \to [0, \infty)$ be a non-increasing function and let $\vartheta$ and $\varpi$ be the functions associated with $\zeta$ as in \eqref{vartheta} and \eqref{w}. Then,
\begin{equation}\label{traceLambda1}
{\rm Tr}: W^1\Lambda _\vartheta (\Omega)  \to \Lambda _\varpi (\partial \Omega).
\end{equation}
Moreover, there exists a constant $\kappa_{11}$ such that
\begin{equation}
	\label{dise-boundary}
	\|{\rm Tr} \, u - {\rm med}(u)\|_{\Lambda _\varpi (\partial \Om)}  \le \kappa_{11}\|\nabla u\|_{\Lambda_\vartheta (\Om)} 
	\end{equation}
for every $u \in W^1\Lambda_\vartheta (\Omega)$.
\end{theorem}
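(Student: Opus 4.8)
The plan is to derive inequality \eqref{dise-boundary} from the general optimal rearrangement-invariant trace embedding \eqref{opttr}, specialized to $X(\Omega)=\Lambda_\vartheta(\Omega)$, by showing that its optimal target $Y(\partial\Omega)$ embeds continuously into $\Lambda_\varpi(\partial\Omega)$. Indeed, as recorded right before the statement, coupling \eqref{opttr} with Lemma \ref{lemma:isopX} (applicable since every admissible domain lies in $\mathcal J_{1/n'}$) yields a constant $\kappa_{10}$ such that $\|{\rm Tr}\,u-{\rm med}(u)\|_{Y(\partial\Omega)}\le\kappa_{10}\|\nabla u\|_{\Lambda_\vartheta(\Omega)}$ for every $u\in W^1\Lambda_\vartheta(\Omega)$. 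Hence it suffices to prove the embedding of target spaces $Y(\partial\Omega)\to\Lambda_\varpi(\partial\Omega)$; then \eqref{traceLambda1} holds and \eqref{dise-boundary} follows with $\kappa_{11}$ equal to $\kappa_{10}$ times the norm of this embedding. By the duality between continuous embeddings and associate spaces, together with \eqref{associate} (which gives $\Lambda_\varpi'(\partial\Omega)=M_{\widehat\varpi}(\partial\Omega)$), this in turn amounts to $M_{\widehat\varpi}(\partial\Omega)\to Y'(\partial\Omega)$.

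The next step is to write the norm of $Y'(\partial\Omega)$ explicitly. Since the operation of passing to the associate space commutes with that of passing to the representation space (see \cite{BS}), and $\Lambda_\vartheta'(\Omega)=M_{\widehat\vartheta}(\Omega)$ by \eqref{associate}, the space $\overline X'(0,|\Omega|)$ occurring in \eqref{june46} is the representation space of $M_{\widehat\vartheta}(\Omega)$, whose norm on a non-increasing function $h\colon(0,|\Omega|)\to[0,\infty)$ is $\sup_{0<s<|\Omega|}\widehat\vartheta(s)\,\tfrac1s\int_0^s h(r)\,dr$. Because $r\mapsto u^{**}_{\mathcal H^{n-1}}(\gamma_\Omega r^{1/n'})$ is non-increasing on $(0,|\Omega|)$, \eqref{june46} therefore reads
$$\|u\|_{Y'(\partial\Omega)}=\sup_{0<s<|\Omega|}\widehat\vartheta(s)\,\frac1s\int_0^s u^{**}_{\mathcal H^{n-1}}(\gamma_\Omega r^{1/n'})\,dr,$$
while, directly from \eqref{vartheta} and \eqref{hatphi}, one has $\widehat\vartheta(s)=s^{1/n'}/\varpi(s^{1/n'})$.

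The heart of the matter is then the estimate $M_{\widehat\varpi}(\partial\Omega)\to Y'(\partial\Omega)$. Fix $u$ with $\|u\|_{M_{\widehat\varpi}(\partial\Omega)}\le1$, so that $u^{**}_{\mathcal H^{n-1}}(\tau)\le\varpi(\tau)/\tau$. Inserting this bound into the formula above and changing variables via $\rho=\gamma_\Omega r^{1/n'}$ reduces the supremum to controlling $s^{-1/n'}\int_0^{\gamma_\Omega s^{1/n'}}\varpi(\rho)\,\rho^{n'-2}\,d\rho$, where the weight $\rho^{n'-2}$ is integrable near the origin because $n'>1$. Estimating $\varpi(\rho)\le\varpi(\gamma_\Omega s^{1/n'})$ on the range of integration and computing $\int_0^{\gamma_\Omega s^{1/n'}}\rho^{n'-2}\,d\rho$, the powers of $s$ cancel exactly against those carried by $\widehat\vartheta(s)$, leaving a quantity proportional to $\varpi(\gamma_\Omega s^{1/n'})/\varpi(s^{1/n'})$, which is bounded by $\max\{1,\gamma_\Omega\}$ thanks to the quasiconcavity of $\varpi$ and \eqref{june45}. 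Hence $\|u\|_{Y'(\partial\Omega)}\le\kappa$ for a constant $\kappa=\kappa(n,\Omega)$, and by homogeneity $M_{\widehat\varpi}(\partial\Omega)\to Y'(\partial\Omega)$.

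Passing to associates in this last embedding gives $Y(\partial\Omega)=Y''(\partial\Omega)\to M_{\widehat\varpi}'(\partial\Omega)=\Lambda_\varpi(\partial\Omega)$, where the final identification uses $\widehat{\widehat\varpi}=\varpi$ in \eqref{associate}; combining with the median-corrected trace inequality recalled at the outset yields \eqref{traceLambda1} and \eqref{dise-boundary}. I expect the main obstacle to be the associate-space bookkeeping — reading off the norm of $Y'$ from \eqref{june46} through \eqref{associate} and the representation/associate-space commutation — together with the careful change of variables that makes the two sides of the target embedding balance precisely; once these are in place, the remaining computations are elementary.
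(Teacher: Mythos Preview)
Your proof is correct and follows essentially the same approach as the paper: both identify the associate of the optimal trace target $Y(\partial\Omega)$ via \eqref{june46} and \eqref{associate} and compare it with $M_{\widehat\varpi}(\partial\Omega)$, then invoke Lemma~\ref{lemma:isopX} for the median-corrected inequality. The only difference is that the paper proves the two-sided equivalence $Y'(\partial\Omega)=M_{\widehat\varpi}(\partial\Omega)$ through the auxiliary Lemma~\ref{equivmarc}, whereas you establish directly just the embedding $M_{\widehat\varpi}(\partial\Omega)\to Y'(\partial\Omega)$, which is all that is needed for the theorem.
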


The proof of Theorem  \ref{traceLambda} requires the following lemma.

\begin{lemma}\label{equivmarc}
Let $\varphi$ be any quasiconcave function.  Assume that   $h \in \mathcal M(0, \infty)$ and let $\gamma, \ell >0$. Then 
\begin{equation}\label{equivmarc1}
\sup_{s\in (0, \gamma \ell^{1/n'})}h^{**}(s)\varphi(\gamma ^{-n'}s^{n'}) \leq \sup_{s\in (0, \ell)} \frac{\varphi (s) }s\int_0^s h^{**}(\gamma r^{\frac 1{n'}})\, dr\,  \leq n\sup_{s\in (0, \gamma \ell^{1/n'})}h^{**}(s)\varphi(\gamma ^{-n'}s^{n'}).
\end{equation}
\end{lemma}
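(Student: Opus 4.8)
The plan is to establish the two inequalities in \eqref{equivmarc1} separately, working directly from the definitions of the suprema and exploiting the monotonicity built into $\varphi$ (through \eqref{june45}) and into $h^{**}$. First I would perform the substitution $\sigma = \gamma r^{1/n'}$, equivalently $r = \gamma^{-n'}\sigma^{n'}$, so that $dr = n' \gamma^{-n'}\sigma^{n'-1}\, d\sigma$, and rewrite the inner quantity on the middle term as
\[
\frac{\varphi(s)}{s}\int_0^s h^{**}(\gamma r^{1/n'})\, dr = \frac{\varphi(s)}{s}\, n' \gamma^{-n'}\int_0^{\gamma s^{1/n'}} h^{**}(\sigma)\,\sigma^{n'-1}\, d\sigma .
\]
Setting $t = \gamma s^{1/n'}$ (so $s = \gamma^{-n'} t^{n'}$ and $s$ ranges over $(0,\ell)$ exactly when $t$ ranges over $(0,\gamma \ell^{1/n'})$), the middle term becomes
\[
\sup_{t\in(0,\gamma\ell^{1/n'})} \frac{\varphi(\gamma^{-n'}t^{n'})}{t^{n'}} \, n'\int_0^{t} h^{**}(\sigma)\,\sigma^{n'-1}\, d\sigma ,
\]
so the whole statement reduces to comparing, for a quasiconcave function $\psi(t) := \varphi(\gamma^{-n'}t^{n'})$ on $(0,\gamma\ell^{1/n'})$, the quantities $\sup_t \psi(t) h^{**}(t)$ and $\sup_t \frac{\psi(t)}{t^{n'}} n'\int_0^t h^{**}(\sigma)\sigma^{n'-1}\,d\sigma$.

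For the lower bound I would simply use that $h^{**}$ is non-increasing: on $(0,t)$ one has $h^{**}(\sigma)\ge h^{**}(t)$, hence $n'\int_0^t h^{**}(\sigma)\sigma^{n'-1}\,d\sigma \ge h^{**}(t)\, n'\int_0^t \sigma^{n'-1}\,d\sigma = h^{**}(t)\, t^{n'}$; multiplying by $\psi(t)/t^{n'}$ and taking the supremum over $t$ gives the left inequality of \eqref{equivmarc1}. For the upper bound the key point is a pointwise estimate on the running integral: for fixed $\sigma < t$, quasiconcavity of $\varphi$ — more precisely the left half of \eqref{june45} applied with the ratio $(\sigma/t)^{n'}\le 1$ — yields $\frac{\psi(t)}{t^{n'}} \le \frac{\psi(\sigma)}{\sigma^{n'}}$, i.e. $t \mapsto \psi(t)/t^{n'}$ is non-increasing; therefore
\[
\frac{\psi(t)}{t^{n'}}\, n'\int_0^t h^{**}(\sigma)\sigma^{n'-1}\,d\sigma \le n'\int_0^t h^{**}(\sigma)\,\frac{\psi(\sigma)}{\sigma^{n'}}\,\sigma^{n'-1}\,d\sigma = n'\int_0^t h^{**}(\sigma)\psi(\sigma)\,\frac{d\sigma}{\sigma} \le \Big(\sup_{\sigma<t} h^{**}(\sigma)\psi(\sigma)\Big) n'\int_0^t \frac{d\sigma}{\sigma},
\]
which diverges, so this crude bound is not quite enough and must be refined: instead I would split $\int_0^t = \int_0^{t/2} + \int_{t/2}^t$ — no, the clean route is to note $\sigma^{n'-1}\le$ handled above and bound $\int_0^t h^{**}(\sigma)\sigma^{n'-1}\,d\sigma \le \sup_{\sigma\le t}\big(h^{**}(\sigma)\varphi(\gamma^{-n'}\sigma^{n'})\big)\int_0^t \frac{\sigma^{n'-1}}{\varphi(\gamma^{-n'}\sigma^{n'})}\,d\sigma$, and then use that $\frac{\sigma^{n'-1}}{\varphi(\gamma^{-n'}\sigma^{n'})}$ is, up to the change of variable $u=\gamma^{-n'}\sigma^{n'}$, a constant multiple of $\widehat\varphi(u)/u$ integrated in $u$, which is comparable to $\widehat\varphi(t$-value$)$, i.e. to $\frac{t^{n'}}{\varphi(\gamma^{-n'}t^{n'})}$, with constant $n$ coming out of the power $n'$ and the quasiconcavity estimate $\int_0^x \widehat\varphi(u)\,\frac{du}{u}\le n\,\widehat\varphi(x)$. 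Multiplying back by $\psi(t)/t^{n'}$ then gives the factor $n$ in the right inequality of \eqref{equivmarc1}.

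The main obstacle is the upper estimate: one has to extract the sharp constant $n$ and control the running integral $\int_0^t h^{**}(\sigma)\sigma^{n'-1}\,d\sigma$ by $h^{**}(t)\varphi(\gamma^{-n'}t^{n'})$-type quantities uniformly in $t$, which forces one to combine the monotonicity of $\sigma\mapsto\varphi(\gamma^{-n'}\sigma^{n'})/\sigma^{n'}$ with the elementary inequality $\int_0^x \frac{du}{\varphi(u)}\le \frac{n'}{n'-1}\cdot\frac{x}{\varphi(x)}$ — equivalently $\int_0^x \widehat\varphi(u)\frac{du}u \le n\,\widehat\varphi(x)$ — which is exactly where quasiconcavity of $\widehat\varphi$ (established in \eqref{hatphi}) and the exponent $n' = \frac{n}{n-1}$ conspire to produce the dimensional constant $n$. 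Everything else is a bookkeeping of the two substitutions $\sigma=\gamma r^{1/n'}$ and $t=\gamma s^{1/n'}$, the identity $n'\int_0^t \sigma^{n'-1}\,d\sigma = t^{n'}$, and the definition of $u^{**}$ as an average.
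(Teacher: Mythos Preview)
Your substitutions and the lower bound are fine and coincide with the paper's argument. The gap is in the upper bound: the inequality you invoke at the end,
\[
\int_0^x \frac{du}{\varphi(u)} \;\le\; n\,\frac{x}{\varphi(x)} \qquad\text{(equivalently }\int_0^x \widehat\varphi(u)\,\tfrac{du}{u}\le n\,\widehat\varphi(x)\text{)},
\]
is \emph{false} for a general quasiconcave $\varphi$. Take $\varphi(u)=u$: then the left side diverges. So your scheme of pulling $\sup_\sigma h^{**}(\sigma)\psi(\sigma)$ out of the integral and bounding the remaining kernel $\int_0^t \sigma^{n'-1}/\psi(\sigma)\,d\sigma$ cannot succeed without extra hypotheses on $\varphi$ that the lemma does not assume.

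The paper's route avoids $\varphi$ entirely in the hard step. It proves the \emph{pointwise} bound (after the rescaling $\gamma=1$)
\[
\frac{1}{s}\int_0^s h^{**}(r^{1/n'})\,dr \;\le\; n\,h^{**}(s^{1/n'})\qquad\text{for every }s>0,
\]
by writing $h^{**}(r^{1/n'})=r^{-1/n'}\int_0^{r^{1/n'}} h^*(\rho)\,d\rho$, changing variables $r\mapsto \varsigma=r^{1/n'}$, applying Fubini, and then dropping the nonpositive term $-\rho^{1/(n-1)}$ in the resulting explicit inner integral. Once this pointwise estimate is in hand, one simply multiplies by $\varphi(s)$ and takes the supremum; no property of $\varphi$ is used. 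In your variables this is exactly
\[
\frac{n'}{t^{n'}}\int_0^t h^{**}(\sigma)\,\sigma^{n'-1}\,d\sigma \;\le\; n\,h^{**}(t),
\]
which you can then multiply by $\psi(t)$. The point you missed is that the structure of $h^{**}$ (as an average of $h^*$) is what produces the constant $n$, not the quasiconcavity of $\varphi$.
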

\begin{proof} After rescaling, we may assume that $\gamma =1$. Equation \eqref{equivmarc1} will thus follow if we show that
	\begin{equation}\label{chain2}
	  h^{**}(s^{1/n'}) \le \frac{1}{s} \int_0^s h^{**}(r^{1/n'}) \;dr \le n h^{**}(s^{1/n'}) \quad \text{for $s >0$,}
	\end{equation}
for every $h \in \mathcal M(0, \infty)$.
\\ The first inequality in \eqref{chain2} holds since the function $h^{**}(s^{1/n'})$ is non-increasing.
\\ The second inequality is a consequence of the following chain:
	\begin{align*}
	\frac{1}{s} \int_0^s h^{**}(r^{1/n'}) \;dr&= \frac{1}{s} \int_0^s \frac{1}{r^{1/n'}} \biggl(\int_0^{r^{1/n'}} h^*(\rho) \;d\rho \biggr) dr = \frac{n'}{s} \int_0^{s^{1/n'}} \varsigma^{\frac{1}{n-1}-1} \l(\int_0^\varsigma h^*(\rho) \;d\rho \r) d\varsigma \\
& = \frac{n'}{s} \int_0^{s^{1/n'}} \biggl(\int_{\rho}^{s^{1/n'}} \varsigma^{\frac{1}{n-1}-1} \;d\varsigma \biggr) h^*(\rho) \;d\rho  = \frac{n}{s} \int_0^{s^{1/n'}} \l(s^{1/n}- \rho^{1/(n-1)} \r) h^*(\rho) \;d\rho \nonumber \\
	& \le \frac{n}{s} \int_0^{s^{1/n'}} s^{1/n} h^*(\rho) \;d\rho  = n s^{1/n-1} \int_0^{s^{1/n'}} h^*(\rho) \;d\rho  = n h^{**}(s^{1/n'}) \quad \text{for $s >0$.} \nonumber
	\end{align*}
\end{proof}

\begin{proof}[Proof of Theorem \ref{traceLambda}]
Let $Y(\partial\Omega)$ be the optimal rearrangement-invariant target space in the trace embedding \eqref{opttr} corresponding to the domain space $X(\Om) = \Lambda _\vartheta (\Om)$.
Let $\widehat \vartheta$ be the quasiconcave function associated with $\vartheta$ as in \eqref{hatphi}.  The following chain holds:
\begin{align}\label{june47}
\|{\rm Tr} u\|_{Y'(\partial \Om)} &= \|({\rm Tr}\, u)^{**}_{\mathcal H^{n-1}}(\gamma _\Om s^{1/{n'}})\|_{\Lambda '_\vartheta (0, |\Om|)} = \|({\rm Tr}\, u)^{**}_{\mathcal H^{n-1}}(\gamma _\Om s^{1/{n'}})\|_{M_{\widehat \vartheta}(0, |\Om|)}  
\\ \nonumber &  =\sup_{s\in (0, |\Om|)} \frac{\widehat \vartheta (s)}s\int_0^s({\rm Tr}\, u)^{**}_{\mathcal H^{n-1}}(\gamma_\Om r^{\frac 1{n'}})\, dr
   \simeq  \sup _{s\in (0, \mathcal H^{n-1}(\partial \Om))}({\rm Tr}\, u)^{**}_{\mathcal H^{n-1}}(s) \, \widehat \vartheta (\gamma_\Om^{-n'}s^{n'})
\\  \nonumber & \simeq\sup _{s\in (0, \mathcal H^{n-1}(\partial \Om))}({\rm Tr}\, u)^{**}_{\mathcal H^{n-1}}(s) \, \widehat \vartheta (s^{n'})= \|{\rm Tr}\, u \|_{M_{\widehat \vartheta (s^{n'})}(\partial \Omega)}=\|{\rm Tr}\, u\|_{M_{\frac{s}{\varpi(s)}}(\partial \Omega)}.
\end{align}
Here, the first equality holds by equation \eqref{june46}, the second one by equation \eqref{associate}, 
the first equivalence by Lemma \ref{equivmarc}, and the second equivalence by equation \eqref{june45}, where equivalence is understood up to multiplicative constants independent of $u$.  Equation \eqref{june47} tells us that
\begin{equation*} 
Y'(\partial \Om) = M_{\frac{s}{\varpi(s)}}(\partial \Omega),
\end{equation*}
up to equivalent norms. Hence, thanks to property \eqref{associate},
\begin{equation*} 
Y(\partial \Om) = M_{\frac{s}{\varpi(s)}}'(\partial \Omega) = \Lambda _{\varpi(s)}(\partial \Omega),
\end{equation*}
up to equivalent norms. Embedding \eqref{traceLambda1} is thus established. 
%
\\ Inequality \eqref{dise-boundary} is a consequence of embedding \eqref{traceLambda1} and of inequality \eqref{grad-X}.
\end{proof}

Theorem \ref{traceLambda} has the following corollary. 
\begin{corollary}\label{cortrace}
Under the same assumptions as in Theorem \ref{traceLambda}, there exists a constant $\kappa_{12}$ such that
\begin{equation}\label{cor1}
\int_0^{\mathcal H^{n-1}(\partial \Om)} ({\rm Tr}\, u - {\rm med}(u))_{\mathcal H^{n-1}}^*(r) \, \zeta (r) \; dr \le \kappa_{12} \int_0^{|\Om|}|\nabla u|^*(r)\, r^{-1/n'}\int_0^{r^{1/n'}} \zeta (\rho) \;d\rho \, dr
\end{equation}
 for every  $u \in W^1\Lambda_\vartheta (\Omega)$.
\end{corollary}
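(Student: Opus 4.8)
The plan is to derive \eqref{cor1} directly from Theorem \ref{traceLambda}, and more precisely from inequality \eqref{dise-boundary}, by rewriting the two Lorentz $\Lambda$-norms occurring there in their explicit integral form. First I would observe that, since $\zeta$ is non-increasing and finite-valued on $[0, \infty)$, it is bounded by $\zeta(0)$; consequently both $\varpi$ and $\vartheta$, defined by \eqref{w} and \eqref{vartheta}, are locally absolutely continuous (indeed locally Lipschitz), so that the Lebesgue--Stieltjes measures $d\varpi$ and $d\vartheta$ appearing in the definition of the relevant $\Lambda$-norms agree with $\varpi'(s)\,ds$ and $\vartheta'(r)\,dr$.

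Since $\varpi'(s) = \zeta(s)$ for a.e.\ $s$, the left-hand side of \eqref{dise-boundary} becomes exactly
\[
\|{\rm Tr}\, u - {\rm med}(u)\|_{\Lambda_\varpi(\partial \Om)} = \int_0^{\mathcal H^{n-1}(\partial \Om)} ({\rm Tr}\, u - {\rm med}(u))_{\mathcal H^{n-1}}^*(r)\, \zeta(r)\, dr,
\]
i.e.\ the left-hand side of \eqref{cor1}. Next I would estimate the right-hand side of \eqref{dise-boundary}. Writing $\|\nabla u\|_{\Lambda_\vartheta(\Om)} = \int_0^{|\Om|} |\nabla u|^*(r)\, \vartheta'(r)\, dr$, a direct differentiation of \eqref{vartheta}, together with the identities $r^{1/n-1} = r^{-1/n'}$ and $r^{1/n} r^{1/n'-1} = 1$, gives
\[
\vartheta'(r) = \frac1n\, r^{-1/n'} \int_0^{r^{1/n'}} \zeta(\rho)\, d\rho + \frac1{n'}\, \zeta(r^{1/n'}) \quad \text{for a.e.\ $r>0$.}
\]
Because $\zeta$ is non-increasing, $\int_0^{r^{1/n'}} \zeta(\rho)\, d\rho \ge r^{1/n'} \zeta(r^{1/n'})$, hence $\zeta(r^{1/n'}) \le r^{-1/n'} \int_0^{r^{1/n'}} \zeta(\rho)\, d\rho$; substituting this and using $\tfrac1n + \tfrac1{n'} = 1$ yields $\vartheta'(r) \le r^{-1/n'} \int_0^{r^{1/n'}} \zeta(\rho)\, d\rho$ for a.e.\ $r$. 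Therefore
\[
\|\nabla u\|_{\Lambda_\vartheta(\Om)} \le \int_0^{|\Om|} |\nabla u|^*(r)\, r^{-1/n'} \int_0^{r^{1/n'}} \zeta(\rho)\, d\rho\, dr.
\]

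Combining the two displays above with inequality \eqref{dise-boundary} immediately produces \eqref{cor1}, with $\kappa_{12} = \kappa_{11}$. The argument is entirely elementary, and I do not anticipate any genuine obstacle: the only points requiring a little care are the reduction of the Stieltjes integrals to ordinary Lebesgue integrals, which rests on the absolute continuity of $\varpi$ and $\vartheta$, and the elementary estimate $\zeta(r^{1/n'}) \le r^{-1/n'}\int_0^{r^{1/n'}}\zeta$, which is exactly where the monotonicity hypothesis on $\zeta$ enters.
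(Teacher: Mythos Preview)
Your proof is correct and follows essentially the same route as the paper: the paper states that the corollary is a straightforward consequence of inequality \eqref{dise-boundary} together with the lemma establishing $\frac{1}{n}\vartheta(r)\le r\vartheta'(r)\le\vartheta(r)$, and you have simply reproduced the relevant half of that lemma (the upper bound $\vartheta'(r)\le r^{-1/n'}\int_0^{r^{1/n'}}\zeta(\rho)\,d\rho$, via exactly the same computation \eqref{derivata} and the same monotonicity argument) inline rather than invoking it separately.
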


Corollary \ref{cortrace}   is a straightforward consequence of  inequality \eqref{dise-boundary}  and of the next lemma.

\begin{lemma}
 Let  $\zeta : [0, \infty) \to [0, \infty)$ be a non-increasing function, and let $\vartheta$   be the function associated with $\zeta$ as in \eqref{vartheta}. 
Then,
%
%
	\begin{equation}
	\label{chain}
\frac 1n \vartheta (r) \leq r\, \vartheta'(r) \le \vartheta (r)  \quad \text{for $r>0$.}
%
	\end{equation}
\end{lemma}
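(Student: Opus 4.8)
The plan is to differentiate $\vartheta$ explicitly and then read off both inequalities from the monotonicity of $\zeta$. Write $\varpi(r)=\int_0^r\zeta(\rho)\,d\rho$, so that $\vartheta(r)=r^{1/n}\varpi(r^{1/n'})$. Since $\zeta$ is non-increasing and finite-valued, it is bounded on every bounded interval, hence $\varpi$ is locally Lipschitz with $\varpi'=\zeta$ at every point of continuity of $\zeta$; consequently $\vartheta$, being a product of locally Lipschitz functions, is locally Lipschitz on $(0,\infty)$ and differentiable at every $r>0$ outside a countable set (those $r$ for which $r^{1/n'}$ is a jump point of $\zeta$). At any such point, using $\tfrac1n+\tfrac1{n'}=1$ — so that $r^{1/n}\cdot r^{1/n'-1}=1$ — the product and chain rules give
\[
\vartheta'(r)=\frac1n\,r^{1/n-1}\varpi(r^{1/n'})+\frac1{n'}\,\zeta(r^{1/n'}),
\]
and therefore, multiplying by $r$,
\[
r\,\vartheta'(r)=\frac1n\,\vartheta(r)+\frac1{n'}\,r\,\zeta(r^{1/n'}).
\]

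The lower bound in \eqref{chain} is then immediate, since the second term on the right-hand side is nonnegative. For the upper bound, the displayed identity shows that $r\,\vartheta'(r)\le\vartheta(r)$ is equivalent to $\tfrac1{n'}r\,\zeta(r^{1/n'})\le(1-\tfrac1n)\vartheta(r)=\tfrac1{n'}\vartheta(r)$, i.e.\ to $r\,\zeta(r^{1/n'})\le\vartheta(r)=r^{1/n}\varpi(r^{1/n'})$. Dividing through by $r^{1/n}$ and setting $s=r^{1/n'}$, this reduces to
\[
s\,\zeta(s)\le\int_0^s\zeta(\rho)\,d\rho,
\]
which holds because $\zeta(\rho)\ge\zeta(s)$ for $0<\rho<s$.

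There is no genuine obstacle here; the only point deserving a word of care is the differentiability of $\vartheta$ when $\zeta$ is merely monotone, which is handled by the locally-Lipschitz remark above. Since $\vartheta$ is moreover non-decreasing (a product of non-decreasing functions), the almost-everywhere inequalities are all that is needed for the applications of \eqref{chain}; if one wants them at every $r>0$, it suffices to interpret $\vartheta'$ as the right derivative, which exists everywhere by monotonicity and for which the same computation and estimates go through verbatim.
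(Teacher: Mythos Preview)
Your proof is correct and follows essentially the same route as the paper: both compute $\vartheta'(r)=\tfrac1n r^{-1/n'}\varpi(r^{1/n'})+\tfrac1{n'}\zeta(r^{1/n'})$, obtain the lower bound by dropping the nonnegative second term, and obtain the upper bound from $\zeta(r^{1/n'})\le r^{-1/n'}\int_0^{r^{1/n'}}\zeta(\rho)\,d\rho$, which is exactly your inequality $s\zeta(s)\le\varpi(s)$. Your extra paragraph on differentiability of $\vartheta$ when $\zeta$ is merely monotone is a careful touch the paper omits.
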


\begin{proof}
Since
	\begin{equation}\label{derivata}
	\begin{split}
	\vartheta'(r)  
	= \frac{1}{n} r^{-1/n'} \int_0^{r^{1/n'}} \zeta (\rho) \;d\rho+ \frac{1}{n} \zeta(r^{1/n'}) 
\quad \text{for $r>0$,}
	\end{split}
	\end{equation}
the first inequality in \eqref{chain} holds since $\zeta$ is nonnegative.
The second inequality also follows from  equation \eqref{derivata}, inasmuch as, thanks to  the monotonicity of the function $\zeta$, 
	\[
	\begin{split}
	\vartheta'(r)
	 \le \frac{1}{n} r^{-1/n'} \int_0^{r^{1/n'}}\zeta (\rho) \;d\rho+ \frac{1}{n'} r^{-1/n'}  \int_0^{r^{1/n'}} \zeta (\rho) \;d\rho 
	= \frac{1}{r^{1/n'}} \int_0^{r^{1/n'}} \zeta (\rho) \;d\rho \quad \text{for $r>0$.}
	\end{split}
	\]
\end{proof}

\section{Proof of Theorem \ref{thm:rob}} 

The proof of Theorem \ref{thm:rob} requires a few additional   technical lemmas. Lemmas \ref{lemmaAT}--\ref{lemma:AT2} concerns properties of the Young function $A_T$ appearing in the Orlicz-Sobolev trace embedding \eqref{trace-ineq1} and in inequality \eqref{traceint}.

\begin{lemma}\label{lemmaAT} 
Let   $A$ be a Young function  fulfilling condition \eqref{intconv0}
and let $A_T$ be the function defined by equation \eqref{AT}. Then,
\begin{equation}
	\label{AT2}
	A_T (t) \le \frac{A(H^{-1}(t))}{H^{-1}(t)}\, t \quad \text{for $t >0$.}
	\end{equation}
\end{lemma}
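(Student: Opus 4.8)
The plan is to exploit the fact that $A_T$ is, by its very definition \eqref{AT}, the primitive of the function
\[
a_T(\tau) = \frac{A(H_n^{-1}(\tau))}{H_n^{-1}(\tau)} \qquad \text{for } \tau>0,
\]
where $H_n$ is the function in \eqref{H} (denoted simply $H$ in the statement), and to show that this integrand is non-decreasing in $\tau$. Once that is known, inequality \eqref{AT2} is immediate, because the average of a non-decreasing function over $(0,t)$ cannot exceed its value at $t$.

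So the first step is to verify that $a_T$ is non-decreasing on $(0,\infty)$. I would do this by writing $a_T(\tau)=g\bigl(H_n^{-1}(\tau)\bigr)$ with $g(\sigma)=A(\sigma)/\sigma$, and checking that both $g$ and $H_n^{-1}$ are non-decreasing. The monotonicity of $g$ is exactly property \eqref{mono}. For $H_n^{-1}$, I would recall the standing convention, in force here by hypothesis \eqref{intconv0}, that $A$ has been modified near $0$ so that $\int_0(\sigma/A(\sigma))^{1/(n-1)}\,d\sigma<\infty$; in particular $A(\sigma)>0$ for every $\sigma>0$, so the integrand $(\sigma/A(\sigma))^{1/(n-1)}$ in \eqref{H} is strictly positive on $(0,\infty)$, whence $H_n$ is strictly increasing on the interval where it is finite, and therefore its generalized left-continuous inverse $H_n^{-1}$ is non-decreasing on $[0,\infty)$. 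Composing the two non-decreasing maps shows $a_T$ is non-decreasing.

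The second step is the one-line conclusion: for every $t>0$,
\[
A_T(t) = \int_0^t a_T(\tau)\,d\tau \le \int_0^t a_T(t)\,d\tau = t\,a_T(t) = \frac{A(H_n^{-1}(t))}{H_n^{-1}(t)}\,t,
\]
which is precisely \eqref{AT2}. If $H_n^{-1}(t)=+\infty$ — which can occur only in the faster-growth regime \eqref{intconv} — then the right-hand side of \eqref{AT2} is to be read as $+\infty$ and the inequality is trivial, so one may assume $H_n^{-1}(t)<\infty$ throughout the argument above.

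I do not expect any genuine obstacle in this lemma: the only point that deserves a moment of care is the behaviour of $H_n$ near $0$ and the meaning of its generalized inverse, which is why one invokes the modification of $A$ guaranteeing \eqref{intconv0} before taking $H_n^{-1}$; everything else is pure monotonicity. The potential pitfall to avoid is implicitly assuming $H_n$ is a genuine (two-sided) inverse or that $a_T$ is continuous — neither is needed, since the estimate $\int_0^t a_T \le t\,a_T(t)$ only uses that $a_T$ is non-decreasing.
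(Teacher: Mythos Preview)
Your argument is correct and follows exactly the paper's approach: the paper's proof is the one-line observation that \eqref{AT2} is ``a straightforward consequence of the monotonicity of the function $\frac{A(H^{-1}(t))}{H^{-1}(t)}$, which in turn follows from property \eqref{mono}.'' You have simply spelled out the details of that monotonicity (composition of $\sigma\mapsto A(\sigma)/\sigma$ with $H_n^{-1}$) and of the resulting integral bound.
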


\begin{proof}
Inequality \eqref{AT2} is a straightforward consequence of the monotonicity of the function $\frac{A(H^{-1}(t))}{H^{-1}(t)}$, which in turn follows from property \eqref{mono}.
\end{proof}

\begin{lemma}\label{lemma:ATdelta2}
Let   $A$ be a Young function satisfying conditions  \eqref{intconv0} and \eqref{intdiv}.
Let $A_T$ be the function defined by equation \eqref{AT}. 
If $A \in \nabla_2$ globally, then $A_T \in \nabla_2$ globally. 
\end{lemma}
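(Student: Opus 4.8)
The plan is to work with the Young conjugate characterization of $\nabla_2$: by \eqref{deltanabla}, it suffices to show that $\widetilde{A_T} \in \Delta_2$ globally, given that $\widetilde A \in \Delta_2$ globally. However, the more direct route is to verify the defining inequality \eqref{nabla2} for $A_T$ directly, namely that there exists $c>2$ with $A_T(2t) \ge c\, A_T(t)$ for all $t \ge 0$. First I would record the elementary but crucial fact that, for a Young function, $A\in\nabla_2$ globally is equivalent to the existence of $q>1$ and $C\ge 1$ such that
\begin{equation*}
\frac{A(t)}{t^q} \text{ is (equivalent to) a non-decreasing function, i.e. } A(\lambda t)\ge C^{-1}\lambda^q A(t) \text{ for } \lambda\ge 1,\ t\ge 0;
\end{equation*}
this is the standard ``$A$ has a lower Matuszewska index strictly above $1$'' reformulation. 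Under assumption \eqref{intdiv}, $H_n^{-1}$ is finite-valued on $[0,\infty)$, so $A_T$ is a genuine finite-valued Young function and the computation below makes sense.

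Next I would analyze how $H_n$ transforms the $\nabla_2$ condition. From the definition \eqref{H},
\begin{equation*}
H_n(t)=\left(\int_0^t\left(\frac{\tau}{A(\tau)}\right)^{\frac{1}{n-1}}d\tau\right)^{\frac1{n'}},
\end{equation*}
and using $A(\tau)\ge C^{-1}(\tau/s)^q A(s)$ for $\tau\le s$ one gets a lower bound on the integrand for $\tau$ near $t$, which yields a two-sided power-type control: there are exponents $0<\beta_1\le\beta_2$ and a constant so that $H_n(\lambda t)$ is comparable to $\lambda^{\beta_i}H_n(t)$ for $\lambda\ge 1$. Consequently $H_n^{-1}$ satisfies an analogous $\Delta_2$/$\nabla_2$-type estimate: $H_n^{-1}(2t)\le c\, H_n^{-1}(t)$ globally (this uses that $q>1$, equivalently \eqref{intconv0} together with $\nabla_2$, to keep the integrand summable near $0$ and super-summable enough to make $H_n$ grow at least like a fixed power). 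Then, since by Lemma \ref{lemmaAT} (eq. \eqref{AT2}) and property \eqref{dis-A-tildeA} we have
\begin{equation*}
\frac{A(H_n^{-1}(t))}{H_n^{-1}(t)}\le A_T(t)\le \frac{A(H_n^{-1}(t))}{H_n^{-1}(t)}\,t \quad\text{(up to the integral definition)},
\end{equation*}
it is enough to control the quantity $t\mapsto \dfrac{A(H_n^{-1}(t))}{H_n^{-1}(t)}$. Writing $s=H_n^{-1}(t)$, the $\nabla_2$ property of $A$ gives $A(2s)/(2s)\ge (c/2)A(s)/s$ with $c>2$, i.e. $A(s)/s$ is itself essentially $\nabla_2$ (lower index $\ge q-1>0$); composing with the $\Delta_2$-type bound for $H_n^{-1}$ transfers this to $A_T$.

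The cleanest way to finish is: differentiate $A_T$, namely $A_T'(t)=a_T(t)=\dfrac{A(H_n^{-1}(t))}{H_n^{-1}(t)}$, show $a_T$ is non-decreasing (it is, by \eqref{mono}), and prove $a_T(2t)\ge c'\,a_T(t)$ globally for some $c'>1$ by combining the lower-index bound for $A(s)/s$ with $H_n^{-1}(2t)\ge H_n^{-1}(t)$ and the upper bound $H_n^{-1}(2t)\le c\,H_n^{-1}(t)$; then integrate, using that a Young function whose density $a_T$ is doubling-from-below with constant $>1$ automatically lies in $\nabla_2$ globally (indeed $A_T(2t)=\int_0^{2t}a_T=\int_0^t a_T + \int_t^{2t}a_T \ge A_T(t) + t\,a_T(t) \ge A_T(t)+\tfrac{1}{c'}\,t\,a_T(2t)$, and $t\,a_T(2t)\ge \tfrac12\int_t^{2t}a_T(2t) \gtrsim A_T(2t)-A_T(t)$, yielding $A_T(2t)\ge (1+\varepsilon)A_T(t)$). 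The main obstacle I anticipate is making the comparison $H_n^{-1}(2t)\le c\,H_n^{-1}(t)$ rigorous with a \emph{global} constant: one must handle the behaviour of $H_n$ both near $0$ (where \eqref{intconv0} is in force, possibly after the harmless modification of $A$ near $0$) and near $\infty$ (where \eqref{intdiv} guarantees $H_n^{-1}$ is finite and genuinely increasing), and argue that the $\nabla_2$ hypothesis on $A$ — which controls $A$ uniformly on all of $[0,\infty)$ — forces a single power-type lower bound on $H_n$ valid on the whole half-line, not merely near infinity. This is exactly the place where ``$\nabla_2$ globally'' (as opposed to ``near infinity'') is used essentially.
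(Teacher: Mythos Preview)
Your plan has a genuine gap: the claimed upper bound $H_n^{-1}(2t)\le c\,H_n^{-1}(t)$ (equivalently, a power-type lower bound on $H_n$) is false under the stated hypotheses. Consider $A(t)\approx t^n(\log t)^{n-1}$ near infinity, suitably modified near $0$; this satisfies \eqref{intconv0}, \eqref{intdiv} and $A\in\nabla_2$ globally. Then $(\tau/A(\tau))^{1/(n-1)}\approx\tau^{-1}(\log\tau)^{-1}$, so $H_n(t)\approx(\log\log t)^{1/n'}$ and $H_n^{-1}(s)\approx\exp(\exp(s^{n'}))$, which is far from $\Delta_2$. Under \eqref{intdiv}, $H_n$ can grow arbitrarily slowly and no such bound exists. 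Since this is precisely the step you flag as ``the main obstacle'', the plan as written cannot be completed.

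The strategy is salvageable because you actually need the \emph{opposite} inequality. To deduce $a_T(2t)\ge c'\,a_T(t)$ with $c'>1$ from the lower-index bound $A(s')/s'\ge (s'/s)^{\eps}A(s)/s$ (with $s=H_n^{-1}(t)$, $s'=H_n^{-1}(2t)$), what is required is a uniform lower bound $s'/s\ge c_0>1$. This follows for every Young function, with no $\nabla_2$ assumption, from the concavity of $H_n^{n'}$ (its derivative $(t/A(t))^{1/(n-1)}$ is non-increasing by \eqref{mono}): one gets $H_n^{n'}(\lambda s)\le\lambda\,H_n^{n'}(s)$ for $\lambda\ge1$, hence $H_n(2^{n'}s)\le 2H_n(s)$, hence $H_n^{-1}(2t)\ge 2^{n'}H_n^{-1}(t)$. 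Combined with $A(s)/s^{1+\eps}$ non-decreasing this gives $a_T(2t)\ge 2^{n'\eps}a_T(t)$, so $A_T(2t)\ge 2^{1+n'\eps}A_T(t)$. For comparison, the paper proceeds quite differently: it verifies the criterion $\inf_{t>0}tA_T'(t)/A_T(t)>1$ directly, by rewriting the quotient via a change of variables, performing an integration by parts against the increasing factor $A(\tau)/\tau^{1+\eps}$, and invoking an auxiliary monotonicity property of $H_n(\tau)\tau^{\eps-1}$ for $\eps<1/n$.
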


\begin{proof}
Since $A \in \nabla_2$ globally,  there exists
$\eps>0$ such that
%
%
the function
	\begin{equation}
	\label{quoz}
	 \frac{A(t)}{t^{1+\eps}} \text{ is increasing,}
	\end{equation}
see \cite[Theorem 3, Chapter 2]{RR}.
In order to show that $A_T \in \nabla_2$ globally it suffices to show that
	\begin{equation}
	\label{claim3}
	\inf_{t>0} \frac{t A'_T(t)}{A_T(t)}> 1,
	\end{equation}
see \cite[Theorem 3, Chapter 2]{RR} again.
One has that
	\begin{align}
	\label{a1}
	\inf_{t>0} \frac{t A_T'(t)}{A_T(t)}&= \inf_{t>0} \frac{ t \frac{A(H_n^{-1}(t))}{H_n^{-1}(t)}}{ \int_0^t \frac{A(H_n^{-1}(\theta))}{H_n^{-1}(\theta)} \,d\theta}= \inf_{\tau >0} \frac{ \frac{A(\tau) H_n(\tau)}{\tau}}{  \int_0^{H(\tau)} \frac{A(H_n^{-1}(\theta))}{H_n^{-1}(\theta)} \,d\theta} \\
	&=  \inf_{\tau >0}n' \frac{\frac{A(\tau) H(\tau)}{\tau}}{ \int_0^{\tau} \big(\frac{A(\upsilon)}{\upsilon}\big)^{\frac{n-2}{n-1}} H_n(\upsilon)^{-\frac{1}{n-1}}  \,d\upsilon}, \nonumber
	\end{align}
where the last equality follows  by a change of variable in the integral.
Thanks to property \eqref{quoz} and to an integration by parts, one obtains that
	\begin{align}
	\label{a2}
	\int_0^t \l(\frac{A(\tau)}{\tau}\r)^{\frac{n-2}{n-1}} H_n(\tau)^{-\frac{1}{n-1}} \,d\tau &= 
%
\int_0^t \frac{A(\tau)}{\tau^{1+\eps}} \frac{d}{d\tau} \l(H_n(\tau)^{n'} \r) \tau^{\eps} H_n(\tau)^{-\frac{1}{n-1}} \,d\tau \\
	& \le \frac{A(t)}{t^{1+\eps}} \int_0^t  \frac{d}{d\tau}\l(H_n(\tau)^{n'}\r) H_n(\tau)^{-\frac{1}{n-1}} \,\tau^{\eps}  \,d\tau  \nonumber \\
	&= n'\frac{A(t)}{t^{1+\eps}} \int_0^t \tau^\eps H_n'(\tau) \; d\tau \nonumber \\
	& = n'\frac{A(t)}{t^{1+\eps}} \l(H_n(t) t^{\eps}- \eps \int_0^t H_n(\tau) \tau^{\eps-1} \,d\tau \r) \quad \text{for $t >0$.} \nonumber
	\end{align}
We claim that, if $\eps < \frac 1n$, then
	\begin{equation}
	\label{claim4}	\text{the function }  H_n(\tau) \tau^{\eps-1} \text{ is decreasing}.
	\end{equation}
Of course, it suffices to  prove  that the function $ \l(H_n(\tau) \tau^{\eps- 1}\r)^{n'}$ satisfies the same property, which  follows from the following chain:
	\begin{equation*}
	\begin{split}
	\frac{d}{d\tau} \l(\l(H_n(\tau) \tau^{\eps- 1}\r)^{n'}\r)&
= \frac{d}{d\tau} \l(\frac{1}{\tau^{n'(1-\eps)}} \int_0^\tau \l(\frac{\theta}{A(\theta)}\r)^{\frac{1}{n-1}} d\theta \r) 
	\\ &= \frac{1}{\tau^{n'(1-\eps)+ 1}} \l[\l(\frac{\tau}{A(\tau)}\r)^{\frac{1}{n-1}} \tau- n'(1-\eps) \int_0^\tau \l(\frac{\theta}{A(\theta)}\r)^{\frac{1}{n-1}} d\theta \r] \\
	& \le \frac{ 1- n'(1-\eps)}{\tau^{n'(1-\eps)+ 1}} \l(\frac{\tau}{A(\tau)}\r)^{\frac{1}{n-1}} \tau   < 0 \quad \text{for $\tau>0$.}
	\end{split}
	\end{equation*}
Observe that the last but one inequality holds thanks to property \eqref{mono}.
Property \eqref{claim4} ensures that
	\begin{equation}\label{apr3}
	\int_0^t H_n(\tau) \tau^{\eps-1} d\tau \ge H(t) t^{\eps} \quad \text{for $t >0$.}
	\end{equation}
Inequalities \eqref{a2} and \eqref{apr3} imply that
	\[
	\begin{split}
	\int_0^t \l(\frac{A(\tau)}{\tau}\r)^{\frac{n-2}{n-1}} H_n(\tau)^{-\frac{1}{n-1}} \,d\tau  \le n'\frac{A(t)}{t^{1+\eps}} \l(H_n(t) t^{\eps}-  \eps H_n(t) t^{\eps} \r) 
	=n' \frac{A(t)}{t} H_n(t) (1-\eps) \quad \text{for $t >0$,}
	\end{split}
	\]
whence 
	\[
	n' \frac{  \frac{A(t)}{t} H_n(t)}{ \int_0^t \big(\frac{A(\tau)}{\tau}\big)^{\frac{n-2}{n-1}} H_n(\tau)^{-\frac{1}{n-1}} \,d\tau} \ge \frac{1}{1-\eps} \quad \text{for $t>0$.}
	\]
This in turn implies that
	\[
	\inf_{t>0} n' \frac{  \frac{A(t)}{t} H_n(t)}{ \int_0^t \l(\frac{A(\tau)}{\tau}\r)^{\frac{n-2}{n-1}} H_n(\tau)^{-\frac{1}{n-1}} \,d\tau}\geq  \frac{1}{1-\eps} > 1.
	\]
Owing to equation \eqref{a1},  inequality  \eqref{claim3} hence follows.
\end{proof}

\begin{lemma}
\label{lemma:AT2}
Under the same assumption as in Lemma \ref{lemma:ATdelta2},
\begin{equation}\label{apr4}
	\lim_{\tau \to 0^+} \l(\sup_{t>0} \frac{A_T(\tau t)}{\tau A_T(t)} \r)= 0.
	\end{equation}
\end{lemma}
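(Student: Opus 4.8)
The statement \eqref{apr4} asserts that $A_T$ grows \emph{strictly faster than linearly} near infinity in a uniform sense; equivalently, that $A_T\in\nabla_2$ globally together with a uniform control of the dilation seminorm as $\tau\to0^+$. The key observation is that $A_T\in\nabla_2$ globally, which is precisely the content of Lemma \ref{lemma:ATdelta2}. The plan is to show that for any finite-valued Young function $B\in\nabla_2$ globally one has $\lim_{\tau\to0^+}\big(\sup_{t>0}\frac{B(\tau t)}{\tau B(t)}\big)=0$, and then apply this with $B=A_T$.

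First I would recall, as in the proof of Lemma \ref{lemma:ATdelta2} (via \cite[Theorem 3, Chapter 2]{RR}), that $B\in\nabla_2$ globally is equivalent to the existence of some $\eps>0$ such that $\frac{B(t)}{t^{1+\eps}}$ is non-decreasing on $(0,\infty)$. From this monotonicity, for $0<\tau\le1$ and $t>0$ we have $\tau t\le t$, hence $\frac{B(\tau t)}{(\tau t)^{1+\eps}}\le\frac{B(t)}{t^{1+\eps}}$, which rearranges to
\begin{equation*}
\frac{B(\tau t)}{\tau B(t)}\le \tau^{\eps}\qquad\text{for all }t>0,\ 0<\tau\le1.
\end{equation*}
Taking the supremum over $t>0$ gives $\sup_{t>0}\frac{B(\tau t)}{\tau B(t)}\le\tau^{\eps}$, and letting $\tau\to0^+$ yields the claim. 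Applying this to $B=A_T$, which is a finite-valued Young function (finite-valuedness of $A_T$ follows from condition \eqref{intdiv}, which forces $H_n^{-1}$ to be finite-valued so the integrand in \eqref{AT} is finite) and belongs to $\nabla_2$ globally by Lemma \ref{lemma:ATdelta2}, gives \eqref{apr4}.

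I do not anticipate a serious obstacle here; the only points requiring a line of justification are (a) that $A_T$ is genuinely finite-valued, so that the dilation quotient $\frac{A_T(\tau t)}{\tau A_T(t)}$ is a well-defined real number for every $t>0$ — this is where assumption \eqref{intdiv} enters — and (b) the quoted equivalence between the global $\nabla_2$ condition and the monotonicity of $t\mapsto B(t)/t^{1+\eps}$, which is already invoked in the proof of Lemma \ref{lemma:ATdelta2} and hence available. The mild subtlety, if any, is purely notational: one should note $A_T(0)=0$ so the estimate is vacuous at $t=0$ and the supremum is effectively over $t>0$, exactly as written in \eqref{apr4}.
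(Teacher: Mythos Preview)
Your proof is correct and follows essentially the same approach as the paper: invoke Lemma \ref{lemma:ATdelta2} to get $A_T\in\nabla_2$ globally, translate this into the monotonicity of $t\mapsto A_T(t)/t^{1+\eps}$ for some $\eps>0$, and deduce the bound $\sup_{t>0}\frac{A_T(\tau t)}{\tau A_T(t)}\le\tau^{\eps}$ for $\tau\in(0,1)$. Your additional remarks on the finite-valuedness of $A_T$ and the case $t=0$ are sound but not strictly needed beyond what the paper's argument already implicitly assumes.
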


\begin{proof}
By Lemma  \ref{lemma:ATdelta2} we have that
$A_T \in \nabla_2$ globally. Hence,   there exists $\eps>0$ such that the function $\frac{A_T(t)}{t^{1+\eps}}$ is increasing. Therefore,
	\[
	\frac{A_T(\tau t)}{(\tau t)^{1+\eps}} \le \frac{A_T(t)}{t^{1+\eps}} \quad \text{ if $t>0$ and  $\tau \in (0, 1)$,}
	\]
whence
	\[
	\sup_{t>0} \frac{A_T(\tau t)}{\tau A_T(t)} \le \tau^{\eps} \quad \text{for $\tau \in (0,1)$.}
	\]
Passing to the limit as $\tau \to 0^+ $ yields  \eqref{apr4}.
\end{proof}

The next lemma is concerned with  properties of an equivalent form of the integrand in condition \eqref{int G}.
Specifically,
let $A$  and $N$ be Young functions fulfilling  \eqref{int G}. A change of variables in the integral appearing in this condition shows that it is equivalent to 
	\begin{equation}
	\label{int G old}
	\int _0 A^{-1} \l(\frac{1}{s} \int_{s^{-1/n'}}^{\infty} \widetilde A\l( r  \int_0^{1/r} N^{-1}(1/\rho) \,d\rho\r)\frac{dr}{r^{n'+1}} \r)\, \frac {ds}{s^{1/n'}}<\infty\,.
	\end{equation}
Define the function $ \Theta \colon [0, \infty) \to [0, \infty)$ as
	\begin{equation}
	\label{def-G}
	\Theta(s)= s A^{-1} \l(\frac{1}{s} \int_{s^{-1/n'}}^{\infty}  \widetilde A\l(r\int_0^{1/r} N^{-1}(1/\rho) \,d\rho\r)\frac{dr}{r^{n'+1}}  \r) \quad \text{for $s >0$.}
	\end{equation}

\begin{lemma}
\label{prop:G}
Let $A$  be Young function such that $A\in \nabla _2$ globally and let $ N$ be a Young function  satisfying condition \eqref{int G}. 
Let $\Theta$ be the function given by \eqref{def-G}. Then,
    \begin{itemize}
   \item[(i)] $\lim _{s\to 0^+}\Theta(s)=0$;

    \item[(ii)] $\Theta$ is strictly increasing;
        
    \item[(iii)] Condition \eqref{int G} is equivalent to 
\begin{equation*}
\int_0\frac{ dr}{\Theta ^{-1}(r)^{1/n'}} \;<\infty.
\end{equation*}
    \end{itemize}
    

\end{lemma}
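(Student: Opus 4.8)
The plan is to mirror closely the structure of the proof of Lemma~\ref{prop:M}, since $\Theta$ plays the same role for Robin problems that $\Phi_\lambda$ plays for Neumann problems. The three assertions are essentially: the integrand of \eqref{int G old} behaves like the reciprocal of a power of the inverse of a monotone function.

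First I would establish (ii). Write $\Theta(s) = s\,A^{-1}(R(s))$, where
\[
R(s) = \frac{1}{s} \int_{s^{-1/n'}}^{\infty}\widetilde A\Bigl(r\int_0^{1/r}N^{-1}(1/\rho)\,d\rho\Bigr)\frac{dr}{r^{n'+1}}
= \frac{1}{s}\,S(s),
\]
with $S(s)$ the integral. Differentiating, $S'(s) = \tfrac{1}{n'}s^{-1/n'-1}\cdot s^{(n'+1)/n'}\,\widetilde A\bigl(s^{-1/n'}\int_0^{s^{1/n'}}N^{-1}(1/\rho)\,d\rho\bigr) = \tfrac{1}{n'}s^{1/n-1}\widetilde A(\cdots) \ge 0$, so $S$ is non-decreasing; since $A\in\nabla_2$ globally, $\widetilde A\in\Delta_2$ globally, hence $\widetilde A$ is finite-valued and strictly positive off a bounded set, but more importantly $\widetilde A(t)>0$ for all $t>0$ because $A\in\nabla_2$ forces $A$ to be finite-valued with $A(t)/t\to\infty$, which by \eqref{young'} makes $\widetilde A$ positive everywhere; thus $S'(s)>0$ and $S$ is strictly increasing. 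Now exactly as in the treatment of the set $\mathcal E$ in Lemma~\ref{prop:M}, split $(0,\infty)$ into the closed set where $R'=0$ and its open complement, a countable union of intervals on which $R'>0$ or $R'<0$. On intervals where $R'\ge 0$, strict monotonicity of $\Theta$ is immediate from $\Theta(s)=sA^{-1}(R(s))$. On intervals where $R'<0$, write $\Theta(s) = \tfrac{A^{-1}(R(s))}{R(s)}\,S(s)$ and use that $A^{-1}(t)/t$ is non-increasing (equivalently \eqref{mono} for $A$) together with $S'>0$ to conclude $\Theta'>0$. At points where $R'=0$, direct differentiation gives $\Theta'(s)=A^{-1}(R(s))>0$. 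Hence $\Theta$ is strictly increasing on all of $(0,\infty)$, and no vanishing-interval alternative arises here (this is where the $\nabla_2$ hypothesis, ensuring $\widetilde A>0$, simplifies matters compared to Lemma~\ref{prop:M}).

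Assertion (i) then follows: since $\Theta$ is strictly increasing on $(0,\infty)$ it has a limit at $0^+$, and condition \eqref{int G} (equivalently \eqref{int G old}, which is $\int_0 s^{-1/n'}A^{-1}(R(s))\,ds<\infty$, i.e.\ $\int_0 s^{-1/n'-1}\Theta(s)\,ds<\infty$) forces this limit to be $0$, for otherwise $\Theta(s)\ge c>0$ near $0$ and $\int_0 s^{-1/n'-1}\,ds=\infty$ would diverge.

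Assertion (iii) is the Fubini computation already used at the end of Lemma~\ref{prop:M}: using that $\Theta$ is strictly increasing with $\Theta(0^+)=0$, and writing $\tfrac{1}{\Theta^{-1}(\tau)^{1/n'}} = n'\int_{\Theta^{-1}(\tau)}^{\infty}s^{-1/n'-1}\,ds$, swap the order of integration to get
\[
\int_0 \frac{d\tau}{\Theta^{-1}(\tau)^{1/n'}} = n'\int_0 s^{-1/n'-1}\Bigl(\int_0^{\Theta(s)}d\tau\Bigr)ds = n'\int_0 s^{-1/n'-1}\Theta(s)\,ds = n'\int_0 s^{-1/n'}A^{-1}(R(s))\,ds,
\]
which is (a constant multiple of) the integral in \eqref{int G old}, hence equivalent to \eqref{int G}. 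The main obstacle, as in Lemma~\ref{prop:M}, is the bookkeeping on the decomposition of $(0,\infty)$ according to the sign of $R'$ and handling the three cases for $\Theta'$; everything else is routine differentiation and Fubini, and the global $\nabla_2$ assumption removes the degenerate case that complicated the earlier lemma.
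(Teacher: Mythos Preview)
Your proof is correct and follows the template of Lemma~\ref{prop:M} faithfully. (There is a harmless arithmetic slip in the exponent of $s$ when you compute $S'(s)$: the powers cancel to $s^0=1$, not $s^{1/n-1}$, but either way $S'(s)=\tfrac1{n'}\widetilde A(\cdots)>0$.)

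The paper's proof of (ii) is shorter because it notices an extra monotonicity property that you do not use: the function $R(s)=\tfrac1s\,S(s)$ is in fact \emph{decreasing} on all of $(0,\infty)$. This follows by differentiating and using that $r\mapsto \widetilde A\bigl(r\int_0^{1/r}N^{-1}(1/\rho)\,d\rho\bigr)$ is non-decreasing (the inner expression is the average of the non-increasing function $\rho\mapsto N^{-1}(1/\rho)$ over $(0,1/r)$), so that
\[
\frac1s\int_{s^{-1/n'}}^{\infty}\frac{\widetilde A(\cdots)}{r^{n'+1}}\,dr \ge \frac1s\,\widetilde A\bigl(s^{-1/n'}\textstyle\int_0^{s^{1/n'}}N^{-1}(1/\rho)\,d\rho\bigr)\int_{s^{-1/n'}}^{\infty}\frac{dr}{r^{n'+1}}=\frac1{n'}\widetilde A(\cdots),
\]
which is exactly what is needed to make $R'\le 0$. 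Hence only your ``$R'<0$'' case is ever needed, and $\Theta(s)=\tfrac{A^{-1}(R(s))}{R(s)}\cdot S(s)$ is the product of a non-decreasing factor and a strictly increasing factor, giving strict monotonicity in one stroke. Your three-case decomposition is correct but unnecessary here; the $\nabla_2$ hypothesis (via $\widetilde A\in\Delta_2$ and hence $\widetilde A>0$) is used in both arguments to guarantee $S'>0$. Parts (i) and (iii) are handled identically in both proofs.
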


\begin{proof} Property (i) follows from property (ii) and  assumption \eqref{int G}, in the equivalent form of \eqref{int G old}.
\\ In order to establish property (ii), 
 we begin by showing that
	\begin{equation}
	\label{G1}
	\text{the function } \  \frac{1}{s} \int_{s^{-{1}/{n'}}}^{\infty} \frac{\widetilde A\l( r \int_0^{1/r} N^{-1}(1/\rho) \,d\rho\r)}{r^{n'+1}} \,dr \  \text{ is decreasing}.
	\end{equation}
Indeed, 
	\begin{align*}
&	\frac{d}{ds} \l[\frac{1}{s} \int_{s^{-1/n'}}^{\infty} \frac{\widetilde A\l( r \int_0^{\frac{1}{r}} N^{-1}(1/\rho) \,d\rho\r)}{r^{n'+1}} \,dr \r]  \\
	 &= -\frac{1}{s}  \l[\frac{1}{s} \int_{s^{-1/n'}}^{\infty} \frac{\widetilde A\l( r \int_0^{1/r} N^{-1}(1/\rho) \,d\rho\r)}{r^{n'+1}} \,dr- \frac{1}{n'} \widetilde A\l( s^{-1/n'} \int_0^{s^{1/n'}} N^{-1}(1/\rho) \,d\rho\r) \r] \\
 &	 \le -\frac{1}{s} \l[\frac{1}{s} \widetilde A \l(\ s^{-1/n'} \int_0^{s^{1/n'}} N^{-1} (1/\rho) \,d\rho \r) \int_{s^{-1/n'}}^{\infty} \frac{dr}{r^{n'+1}} - \frac{1}{n'} \widetilde A \l( s^{-1/n'}  \int_0^{s^{1/n'}} N^{-1}(1/\rho) \,d\rho \r) \r] \\
 &= 0.
\end{align*}
In particular, the inequality in the chain above holds since, thanks to the monotonicity of the functions $\widetilde A$ and $ N^{-1}$, the function 
$$ 
\widetilde A\l( r \int_0^{\frac{1}{r}} N^{-1}(1/\rho) \,d\rho\r) \,\, \text{ is non-decreasing.}
$$
On the other hand,   equation \eqref{def-G} can be rewritten as
	\begin{equation}
	\label{G-equiv}
	\begin{split}
	 \Theta(s)
= \frac{A^{-1} \l(\frac{1}{s} \int_{s^{-1/n'}}^{\infty} \frac{\widetilde A\l( r \int_0^{1/r} N^{-1}(1/\rho) \,d\rho \r)}{r^{n'+1}} \,dr \r)}{\frac{1}{s} \int_{s^{-1/n'}}^{\infty} \frac{\widetilde A\l( r \int_0^{1/r} N^{-1}(1/\rho) \,d\rho \r)}{r^{n'+1}} \,dr} \int_{s^{-1/n'}}^{\infty} \frac{\widetilde A\l( r \int_0^{1/r} N^{-1}(1/\rho) \,d\rho \r)}{r^{n'+1}} \,dr \quad \text{for $s>0$.}
	\end{split}
	\end{equation}
The quotient on the right-hand side of equation \eqref{G-equiv} is non-decreasing, owing to properties \eqref{mono} and \eqref{G1}. Moreover, the last integral in \eqref{G-equiv}  is strictly increasing, inasmuch as, by property \eqref{deltanabla},   $\widetilde A \in \Delta_2$, and the latter property  implies that $\widetilde A(t)>0$ for $t>0$. 
\\ Property (iii) can be verified through a chain analogous to \eqref{apr20}. 
\end{proof}
\begin{proof}[Proof of Theorem \ref{thm:rob}] Without loss of generality, the same properties of the functions $A$ and $M$ as in the proof of Theorem \ref{thm:neu}, with $\alpha =n$, can be assumed to hold. This is again possible since   conditions  \eqref{ipBn}--\eqref{ipDn}, \eqref{new M}, \eqref{dis-AT} and \eqref{int G}  depend on the values of the functions involved only for large values of their argument. For the same reason, after a possible replacement  by an equivalent Young function near zero, the function $A$ can be assumed to fulfill the $\nabla_2$-condition globally, and not just near infinity.
\\ We argue by contradiction, and suppose that
$u$ is a weak solution to problem \eqref{robin} such that $\displaystyle \esssup |u|= \infty$. 
Fix any $t >\{t_0, {\rm med}(|u|)\}$, where $t_0$ is the constant appearing in conditions \eqref{ipBn}--\eqref{ipDn} and \eqref{dis-AT}. Choosing the test function \eqref{phi} in equation \eqref{weak-sol} results in 
	\begin{equation}
	\label{1}
	\int_{\Omt} \mathcal A(x, u, \nabla u) \cdot \nabla u \; dx= \int_{\Omt} \mathcal B(x, u, \nabla u) \sgn(u) (|u|-t) \;dx+ \int_{E_t} \mathcal C(x, u) \sgn(u) (|u|- t) \;d\ch^{n-1}.
	\end{equation}
Here, $\Omega_t$ is the set defined as in \eqref{omegat}, and 
    \begin{equation*}
        E_t= \{x \in \rand : \, |u(x)|> t\} .
    \end{equation*}
The integrals over the set $\Omt$ in equation \eqref{1} can be   estimated as in the proof of Theorem \ref{thm:neu}. One has just to set $\alpha =n$, inasmuch as,  being a bounded Lipschitz domain,  the set $\Omega \in \mathcal J_{ 1/{n'}}$.
So doing, inequalities \eqref{dir-1.1} and \eqref{dir-2.7new} turn into
    \begin{equation}
        \label{robin-1}
        \intot \mathcal{A}(x, u, \nabla u) \cdot \nabla u \; dx \ge \intot A(|\nabla u|) \; dx- \intot A_n(\sigma |u|)\; dx
    \end{equation}
and
\begin{align}
	\label{robin-2}
& \int_{\Om_t} \mathcal B(x, u, \nabla u) \sgn(u) (|u|- t) \; dx \\
& \le \Big(\eps \kappa_1 + \frac 1k\Big) \int_{\Om_t} A(|\nabla u|) \;dx+ \kappa_1 \int_0^{\mu(t)} \widetilde A \l(\frac{\|f\|_{L^{M,\infty}(\Om)}}{\kappa_6\eps} r^{ 1/n}   M^{-1} \l({1}/{r} \r) \r) dr \nonumber \\
	&  \qquad +\kappa_1  \int_{\mu(t)}^{\infty} \widetilde A \l(\frac{\|f\|_{L^{M,\infty}(\Om)}}{\kappa_6\eps} r^{-1/{n'}} \mu(t)  M^{-1} \l({1}/{\mu(t)}\r) \r) dr + 2 \intot A_n(\sigma|u|) \; dx. \nonumber
	\end{align}
%
%
%
In order to estimate the last integral in equation \eqref{1}, we make use of assumption \eqref{ipCn} and obtain that
	\begin{equation}
	\label{3}
	\begin{split}
	\intort \mathcal C(x, u) \sgn(u) (|u|- t) \,d\mathcal H^{n-1}
	& \le \intort g(x)(|u|- t) \,d\mathcal H^{n-1}+ \intort F(|u|) (|u|- t) \,d\mathcal H^{n-1}.
	\end{split}
	\end{equation}
By the Hardy-Littlewood inequality \eqref{HL}, the assumption that $g \in L^{N, \infty}(\rand)$, and the trace  inequality  \eqref{cor1}  applied with $\zeta (r) = N^{-1}(1/r)$ to the function $(|u|-t)_+$  one can deduce that
	\begin{align}
	\label{3.1}
	\intort g(x)(|u|- t) \,d\mathcal H^{n-1} & \le \int_0^{\mathcal H^{n-1}(\rand)} g^*(r) (|u|-t)_+^*(r) \,dr \\
	& \le \|g\|_{L^{N, \infty}(\rand)} \int_0^{\mathcal H^{n-1}(\rand)} N^{-1}(1/r) (|u|-t)_+^*(r) \,dr \nonumber \\
& \le \kappa_{12} \|g\|_{L^{N, \infty}(\rand)} \int_0^{|\Om|} (|\nabla u|\chi_{\Om_t})^*(r)\, r^{-1/n'}\int_0^{r^{1/n'}}  N^{-1}(1/\rho)  \;d\rho \, dr \nonumber.
	\end{align}
Note that the application of inequality \eqref{cor1} is legitimate, since ${\rm med}((|u|-t)_+)=0$, by the choice $t> {\rm med}(|u|)$.
\\ Next, since the function $(|\nabla u|\chi_{\Om_t})^*$ vanishes in $(\mu(t), |\Omega|)$, by   inequality \eqref{june21} we have that
	\begin{align}
	\label{3.2}
 \int_0^{|\Om|} & (|\nabla u|\chi_{\Om_t})^*(r)\, r^{-1/n'}\int_0^{r^{1/n'}}  N^{-1}(1/\rho)  \;d\rho \, dr 
\\ 
& =  \int_0^{|\Om|}  (|\nabla u|\chi_{\Om_t})^*(r)\,\chi_{[0,\mu(t)]}(r)\, r^{-1/n'}\int_0^{r^{1/n'}}  N^{-1}(1/\rho)  \;d\rho \, dr \nonumber
\\
& \leq 
	 \eps\int_0^{|\Om|} A( (|\nabla u|\chi_{\Om_t})^*)   \,dr
	+\eps  \int_0^{|\Om|} \widetilde A\l(\chi_{[0,\mu(t)]}(r) \frac{r^{-1/n'}}{\eps} \int_0^{r^{1/n'}} N^{-1}(1/\rho) \;d\rho \r) dr \nonumber
\\
&   =\eps\int_{\Om_t} A(|\nabla u|)   \,dx
	+\eps  \int_0^{\mu(t)} \widetilde A\l(\frac{r^{-1/n'}}{\eps} \int_0^{r^{1/n'}} N^{-1}(1/\rho) \;d\rho \r) dr \nonumber
\\
&   =\eps\int_{\Om_t} A(|\nabla u|)   \,dx
	+\eps  n' \int_{\mu(t)^{-1/n'}}^{\infty}  \widetilde A \l(\frac{\varsigma}{\eps} \int_0^{1/\varsigma} N^{-1}(1/\rho) \;d\rho \r)\frac{d\varsigma}{\varsigma^{n'+1}}. \nonumber
	\end{align}
Coupling equation  \eqref{3.1} with \eqref{3.2} yields
	\begin{align}
	\label{3.4}
	&\intort g(x) (|u|- t) \,d \mathcal H^{n-1} \\
	& \le \kappa_{12}\eps \|g\|_{L^{N, \infty}(\rand)} \l[\intot A(|\nabla u|) \,dx+  n'  \int_{\mu(t)^{-1/n'}}^{\infty} \widetilde A \l(\frac{r}{\eps} \int_0^{1/r} N^{-1}(1/\rho) \;d\rho \r)\frac{dr}{r^{n'+1}}\r]. \nonumber
	\end{align}
The second integral on the right-hand side of inequality  \eqref{3} can be bounded as follows. 
Thanks to assumption \eqref{dis-AT}, 
\begin{align}
	\label{3.5}
	 & \intort F(|u|) (|u|- t) \,d \mathcal H^{n-1}
	 \le \int_{E_t} \frac{A_T(\sigma|u|)}{|u|}(|u|- t) \,d \HN \\
 & \le \frac{1}{2} \intort  A_T(2\sigma (|u|- t)) \,d\HN+  \frac{1}{2} \intort \frac{A_T(2\sigma t)}t(|u|- t) \,d\HN  \nonumber \\
	& = \frac{1}{2} \intort  A_T \l(\frac{2\sigma \l(\intot A(|\nabla u|) \,dx\r)^{1/n} (|u|- t)}{ \l(\intot A(|\nabla u|) \,dx \r)^{1/n}}\r) d \HN+\frac{1}{2} \intort \frac{A_T(2\sigma t)}t(|u|- t) \,d\HN. \nonumber
	\end{align}
%
%
%
By Lemma \ref{lemma:AT2}, for every $\eps>0 $ there exists $\delta>0$ such that $A_T(\theta \tau)< \eps \theta A_T(\tau)$ for every $\tau >0$ and  every $\theta \in (0,\delta)$. Let $\kappa_8$ be the constant appearing in inequality \eqref{traceint}.  Let $t_1>t_0$ be so large that 
$$ \frac{2\sigma}{\kappa_8}\l(\intot A(|\nabla u|) \,dx\r)^{1/n} <\delta.$$
Hence  by inequality  \eqref{traceint}, if $t>t_1$, then
%
	\begin{align}\label{apr20-1}
 \intort  A_T& \l(\frac{2\sigma \l(\intot A(|\nabla u|) \,dx\r)^{1/n} (|u|- t)}{ \l(\intot A(|\nabla u|) \,dx \r)^{1/n}}\r) d \HN 
\\ 
\leq  & \frac{2 \eps \sigma}{\kappa_8}\l(\intot A(|\nabla u|) \,dx\r)^{1/n} 
 \intort  A_T \l(\frac{\kappa_8 (|u|- t)}{ \l(\intot A(|\nabla u|) \,dx \r)^{1/n}}\r) d \HN \nonumber
\\
&  \leq 
  \frac{2 \eps \sigma}{\kappa_8} \l(\intot A(|\nabla u|) \,dx\r)^{1/n} \l(\intot A(|\nabla u|) \,dx\r)^{1/n'}  =
 \frac{2 \eps \sigma}{\kappa_8}\intot A(|\nabla u|) \,dx. \nonumber
	\end{align}
Next, by inequality \eqref{traceint} with $A(t)=t$,  which corresponds to a standard trace inequality for functions in $W^{1,1}(\Om)$, there exists a constant $c$ such that
\begin{align}
	\label{3.6}
	 \intort \frac{A_T(2\sigma t)}t(|u|- t) \,d\HN & 
	 \le c \intot \frac{A_T(2\sigma t)}t |\nabla u| \,dx 
\\
	& \le c \eps \bigg(\intot \widetilde A \l( \frac{A_T(2\sigma t)}{\eps t}\r) dx+ \intot A(|\nabla u|) \,dx\bigg) \nonumber \\
	&= c\eps \mu(t)\widetilde A \l( \frac{A_T(2\sigma t)}{\eps t}\r) + c \eps \intot A(|\nabla u|) \,dx,\nonumber
	\end{align}
where the function $\mu$ is defined as in \eqref{mu}. Since $A \in \nabla_2$,
property \eqref{deltanabla} ensures  that $\widetilde A \in \Delta_2$. Hence
	\begin{align}
	\label{3.7}
\widetilde A \l( \frac{A_T(2\sigma t)}{\eps t}\r)&  \le \widetilde A \l(\frac {2\sigma}\eps \frac{A(H^{-1}(2\sigma t))}{H^{-1}(2\sigma t)}\r)   \le c \widetilde A\l(\frac{A(H^{-1}(2\sigma t))}{H^{-1}(2\sigma t)}\r) \\ 
& \le c A(H^{-1}(2\sigma t))   = c A_n(2\sigma t)\quad \text{for $t>0$,}\nonumber
	\end{align}
for some constant $c$,
where the first inequality holds by equation \eqref{AT2}, the second one by the $\Delta_2$-property of $\widetilde A$,  the third one by \eqref{dis-A-tildeA}, and the equality by the very definition of the function $A_n$.
\\ Combining inequalities \eqref{3.5}--\eqref{3.7}  yields
\begin{align}
	\label{3.8}
	\intort F(|u|) (|u|- t) \,d\HN \le \eps c \intot A(|\nabla u|) \,dx+ \eps c \mu(t) A_n(\sigma t) \quad \text{for $t>t_1$,}
	\end{align}
for some constant $c$.

\color{black}
From equations \eqref{robin-1}--\eqref{3}, \eqref{3.4}, and \eqref{3.8} we infer that, if  $\eps$ is chosen sufficiently small, then
	\begin{align}
	\label{4}
	&\intot A(|\nabla u|) \,dx \\
	& \le c \Bigg\{  \mu(t) A_n (c t)+
  \int_0^{\mu(t)} \widetilde A \l(r^{1/n}   M^{-1} \l({1}/{r} \r) \r) dr
\nonumber	  \\
	  & \quad+  \int_{\mu(t)}^{\infty} \widetilde A \l(r ^{-1/{n'}} \mu(t)  M^{-1} \l(1/\mu(t) \r) \r) dr
+ \int_{\mu(t)^{-1/n'}}^{\infty}  \widetilde A \l(r  \int_0^{1/r} N^{-1}(1/\rho) \;d\rho \r)\frac{dr}{r^{n'+1}}\Bigg\},    \nonumber
%
%
	\end{align}
{for $t>t_1$, for some constant $c$. Note that here we have again exploited the fact that $\widetilde A\in \Delta_2$.}
\\
Let $\Psi$ be the function defined as in \eqref{N}, with $\lambda =1$, and let 
$\Xi :[0, \infty) \to [0, \infty)$ be the function  given by
$$\Xi(s)  = \frac{1}{s}\int_{s^{-1/n'}}^{\infty}  \widetilde A \l(\frac r\eps  \int_0^{1/r} N^{-1}(1/\rho) \;d\rho \r)\frac{dr}{r^{n'+1}} \quad \text{for $s>0$.}$$
Inequality \eqref{4} can be rewritten as
	\begin{equation*}
	\frac{1}{\mu(t)} \intot A(|\nabla u|) \,dx \le c\l( A_n(c t)+ \Psi(\mu(t))+ \Xi(\mu(t))\r) \quad \text{for $t>t_1$.}
	\end{equation*}
Hence, via inequality  \eqref{dir-4.2} with $\alpha =n$ and property \eqref{Ak} one obtains that
	\begin{equation}
	\label{4.3}
	\int_t^{\infty} \mu(t)^{1/n'} d\tau \le  \kappa_4 \mu(t) A^{-1} \l(A_n(c t)+ \Psi(\mu(t))+ \Xi(\mu(t))\r) \quad \text{for $t>t_1$,}
	\end{equation}
for some constant $c$.
If   $\eta$ denotes  the function defined as in \eqref{y}, then
equation \eqref{4.3} in turn reads 
    \begin{equation}
        \label{robin-ineq-y}
    \eta(t) \le c \l(-\eta'(t)\r)^{n'} A^{-1} \l( A_n (ct)+ \Psi \big((-\eta'(t))^{n'}\big)+ \Xi \big((-\eta'(t))^{n'} \big)\r) \quad \text{for $t>t_1$.}
    \end{equation}
Now set
	\begin{align*}
	U& = \l\{t \ge t_1 : \, \Psi(\mu(t)) \ge \max\{ A_n(ct), \Xi(\mu(t))\}\r\},\\ 
		V & = \l\{t \ge t_1 : \, \Xi(\mu(t)) \ge \max\{ A_n(ct), \Psi(\mu(t))\}\r\}.
	\end{align*}
We claim that
	\begin{equation}
	\label{claim1}
	|U|< \infty \quad \text{and} \quad |V|< \infty.
	\end{equation}
One can show that $|U|< \infty$ via inequalities analogous to  \eqref{dis-dir-M} and \eqref{meas-U}.
\\ In order to prove that $|V|< \infty$, from equation \eqref{robin-ineq-y} and the definition of the set $V$ one deduces that
\begin{equation*}
	\eta(t) \le c\, (-\eta'(t))^{n'} A^{-1} \l( \Xi \big((-\eta'(t))^{n'} \big)\r)  = c\, \Theta \big(((-\eta'(t))^{n'})\big) \quad \text{for $t>t_1$},
	\end{equation*}
where $\Theta$ denotes the function  defined  in \eqref{def-G}. An argument analogous to the one which yields $|U|< \infty$, but relying upon 
Lemma \ref{prop:G} instead of Lemma \ref{prop:M}, 
tells us that $|V|< \infty$. Claim \eqref{claim1} hence follows. 
\\ We have thereby shown that the set  $Z= [t_1, \infty) \setminus \l(U \cup V\r)$ is such that $|Z|= \infty$. Since inequality  \eqref{dir-4.4.1} continues to hold in the present framework, with $\alpha =n$ and  the set $W$ replaced by $Z$, the same argument as in the proof of Theorem \ref{thm:neu} leads to a contradiction. This completes the proof.
\end{proof}


\section*{Compliance with Ethical Standards} 

\smallskip
\par\noindent 
{\bf Funding}. This research was partly funded by:  
\\ (i) Research project of the Italian Ministry of Education, University and Research (MIUR), Prin 2017 ``Nonlinear differential problems via variational, topological and set-valued methods'', grant number 2017AYM8XW (G.Barletta).
\\ (ii) Research Project  of the Italian Ministry of Education, University and
Research (MIUR) Prin 2017 ``Direct and inverse problems for partial differential equations: theoretical aspects and applications'',
grant number 201758MTR2 (A.Cianchi);
\\ (iii) GNAMPA   of the Italian INdAM - National Institute of High Mathematics (grant number not available)  (G.Barletta, A.Cianchi, G.Marino);   
\\  (iv) DFG via grant GZ: MA 10100/1-1, grant number 496629752 (G.Marino).
\smallskip
\par\noindent
{\bf Conflict of Interest}. The authors declare that they have no conflict of interest.

\end{document}